\documentclass{amsart}

\usepackage{amssymb, amsmath,mathtools}
\usepackage{mathrsfs,mathabx}
\usepackage{amscd}
\usepackage{verbatim}
\usepackage{stmaryrd}
\usepackage[dvipsnames]{xcolor}
\usepackage{subfig}

\captionsetup[subfloat]{listofformat=parens}
\usepackage{enumerate}
\usepackage{stmaryrd,esint}

\usepackage[colorlinks,linkcolor={blue},citecolor={blue},urlcolor={purple},]{hyperref}

\usepackage[colorinlistoftodos,prependcaption,textsize=tiny]{todonotes}

\theoremstyle{plain}
\newtheorem{theorem}{Theorem}[section]
\theoremstyle{remark}
\newtheorem{remark}[theorem]{Remark}

\theoremstyle{plain}

\newtheorem{lemma}[theorem]{Lemma}
\newtheorem{proposition}[theorem]{Proposition}

\newtheorem{definition}[theorem]{Definition}

\numberwithin{equation}{section}

\def\N{{\mathbb N}}
\def\Z{{\mathbb Z}}

\def\R{{\mathbb R}}

\newcommand{\E}{{\mathbf E}}
\renewcommand{\P}{{\mathbf P}}
\newcommand{\F}{{\mathscr F}}


\newcommand{\g}{\gamma}
\newcommand{\om}{\omega}
\renewcommand{\O}{\Omega}

\newcommand{\loc}{{\rm loc}}
\newcommand{\Tor}{\mathbb{T}}
\newcommand{\A}{\mathcal{A}}
\newcommand{\calL}{\mathscr{L}}

\newcommand{\one}{{{\bf 1}}}
\newcommand{\embed}{\hookrightarrow}
\newcommand{\s}{\delta}
\renewcommand{\emptyset}{\varnothing}
\newcommand{\wh}{\widehat}
\newcommand{\dd}{\mathrm{d}}
\newcommand{\Borel}{\mathscr{B}}

\newcommand{\vd}{v_{{\rm det}}}
\newcommand{\wwd}{w_{{\rm det}}}
\newcommand{\ud}{u_{{\rm det}}}

\newcommand{\T}{\Tor}

\newcommand{\p}{\mathbb{P}}
\newcommand{\q}{\mathbb{Q}}


\newcommand{\wt}{\widetilde}
\renewcommand{\i}{\mathrm{i}}
\newcommand{\Ls}{\mathbb{L}}
\newcommand{\Hs}{\mathbb{H}}
\newcommand{\Bs}{\mathbb{B}}

\newcommand{\Hsd}{\dot{\mathbb{H}}}

\newcommand{\D}{\mathcal{D}}
\newcommand{\supp}{\normalfont{\text{supp}}\,}
\newcommand{\Dom}{\mathcal{O}}
\newcommand{\LL}{\mathcal{P}_\theta}
\newcommand{\LLxi}{\mathcal{P}}
\newcommand{\LLn}{\mathcal{P}_{n}}
\newcommand{\LLnn}{\mathcal{P}_{\theta^n}}
\newcommand{\x}{\mathcal{V}}

\newcommand{\coeff}{\zeta}
\newcommand{\coeffone}{\coeff}
\newcommand{\Bi}{\mathcal{N}}

\allowdisplaybreaks

\begin{document}

\author{Antonio Agresti}
\address{Delft Institute of Applied Mathematics\\
Delft University of Technology \\ P.O. Box 5031\\ 2600 GA Delft\\The
Netherlands} 
\curraddr{Department of Mathematics Guido Castelnuovo, Sapienza University of Rome,
P.le Aldo Moro 5, 00185 Rome, Italy}
\email{antonio.agresti92@gmail.com}

\thanks{The author has received funding from the VICI subsidy VI.C.212.027 of the Netherlands Organisation for Scientific Research (NWO). The author is a member of GNAMPA (INdAM)}

\date\today

\title[Global smooth solutions by transport noise of 3D HSNE\lowercase{s}]{Global smooth solutions by transport noise \\ of 3D Navier-Stokes equations\\ with
small hyperviscosity}

\keywords{3D Navier-Stokes equations, hyperviscosity, regularization by noise, dissipation enhancement, transport noise, stochastic maximal regularity.}

\subjclass[2010]{Primary: 60H50, Secondary: 60H15, 76M35, 35Q35} 

\begin{abstract}
The existence of global smooth solutions to the Navier-Stokes equations (NSEs) with hyperviscosity $(-\Delta)^\g$ is open unless $\g $ is close to the J.-L.\ Lions exponent $ \frac{5}{4}$ at which the energy balance is strong enough to prevent singularity formation. If $1<\g\ll \frac{5}{4}$, then the global well-posedness of the hyperviscous NSEs is widely open as for the usual NSEs. 

In this paper, for all $\g>1$, we show the existence of a transport noise for which global smooth solutions to the stochastic hyperviscous NSEs on the three-dimensional torus exist with high probability. In particular, a suitable transport noise considerably improves the known well-posedness results in the deterministic setting. 
\end{abstract}

\maketitle
\setcounter{tocdepth}{1}

\tableofcontents

\section{Introduction}
In this paper, we investigate the effect of transport noise on the 3D hyperviscous Navier-Stokes equations (HNSEs in the following):
\begin{equation}
\label{eq:hyper_NS}
\left\{
\begin{aligned}
\partial_t u &+ (u\cdot\nabla)u 
+ (-\Delta)^{\g} u = -\nabla p \\
&  +\sqrt{\frac{3\mu}{2}} \sum_{k\in \Z^3_0}\sum_{\alpha\in \{1,2\}} \big[-\nabla \wt{p}_{k,\alpha}+ \theta_k(\sigma_{k,\alpha}\cdot\nabla) u \big]\circ \dot{W}^{k,\alpha}& \text{ on }&\T^3,\\
\nabla\cdot u&=0&\text{ on }&\T^3,\\
u(0,\cdot)&=u_0&\text{ on }&\T^3.
\end{aligned}
\right.
\end{equation}
Here, $u:[0,\infty)\times\O\times \T^3\to \R^3$ is the velocity field of an incompressible fluid, $p:[0,\infty)\times\O\times \T^3\to \R$ and $\wt{p}_{k,\alpha}:[0,\infty)\times\O\times \T^3\to \R$ denote the corresponding deterministic and turbulent pressure, respectively. 
The domain in which the fluid evolves is the three-dimensional periodic box 
$\T^3=[0,1]^3$ and $\g>1$ is the intensity of the hyperviscous operator $(-\Delta)^{\g}$ which is defined via Fourier transform as
\begin{equation}
\label{eq:symbol_F}
\mathcal{F}[(-\Delta)^{\g}u](k) = |k|^{2\g} \mathcal{F}[u](k)\ \  \text{ for }\ k\in \Z^3.
\end{equation} 
Moreover, $\theta=(\theta_k)_{k\in \Z^3_0}\in \ell^2$ where $\Z^3_0=\Z^3\setminus\{0\}$, $\mu>0$ is the noise intensity, $(\sigma_{k,\alpha})_{k\in \Z^3_0,\alpha\in\{1,2\}}$ is a suitable family of divergence-free vector fields, see Subsection \ref{ss:noise} below for details. Finally, $\circ$ denotes the Stratonovich product and $(W^{k,\alpha})_{k\in \Z^3_0,\alpha\in \{1,2\}}$ is a family of complex Brownian motions. Let us anticipate that the noise in \eqref{eq:hyper_NS} can be reformulated using only real-valued objects. However, we employ the complex formulation, as is the one used in the related literature. 

\smallskip

The (deterministic) HNSEs have attracted a lot of attention in the last decades as they present several similarities with the Navier-Stokes equations (NSEs in the following) corresponding to the case $\g=1$, although being a regularized version of the latter due to the hyperviscosity $\g>1$. As for the NSEs, in general, the only a priori estimate known up to the (possible) blow-up time for the HNSEs is the energy balance (see also \cite[Section 3]{CDLM20} for a local version). As shown by J.-L.\ Lions \cite{L69_lions}, the energy balance is sufficient to prevent the blow-up of strong solutions if $\g\geq \frac{5}{4}$. One way to understand the appearance of such a threshold is through scaling. Indeed, as recalled in the following (cf.\ the comments below \eqref{eq:scaling_HNSEs}), the energy space $L^2(\T^3;\R^3)$ is critical (resp.\ subcritical) for the HNSEs if and only if $\g= \frac{5}{4}$ (resp.\ $\g>\frac{5}{4}$). 
Several attempts have been made to extend the global well-posedness result of Lions below the threshold $\frac{5}{4}$. Firstly, Tao \cite{T09_supercritical} extended such global well-posedness to the case where $(-\Delta)^{5/4}$ is replaced by a logarithmically \emph{supercritical} operator.
The latter was later generalized by Barbato, Morandin, and Romito in \cite{BMR14} by replacing the logarithmic supercriticality with a Dini-type condition.
Later, Colombo, de Lellis, and Massaccesi in \cite{CDLM20}, by means of a generalized version of the Caffarelli-Kohn-Nirenberg theory of partial regularity of NSEs, showed the existence of $\varepsilon>0$ (depending only on the norm of $u_0$ in suitable function spaces) such that the HNSEs admit a global smooth solution for $\g\geq \frac{5}{4}-\varepsilon$. Another approach to such a result via the stability of solutions to the HNSEs was obtained by Colombo and Haffter in \cite{CH21}.
Finally, let us mention the work by Luo and Titi \cite{LT20_nonuniq}, in which indications about the sharpness of the J.-L.\ Lions exponent were found using convex integration methods (see also \cite{RT24_convex_integration} for related results).
 
\smallskip

Although many attempts have been made in the deterministic setting, the question of the existence of global smooth solutions of the HNSEs in the case $ 1<\g \ll \frac{5}{4}$ is still widely open. From a PDE point of view, one of the reasons is, as commented above, that the only a priori estimate for the HNSEs is the energy balance in $L^2(\T^3;\R^3)$ and the latter is (largely) supercritical. The same situation also appears for the case $\g=1$, which corresponds to the millennium problem on the existence of global smooth solutions for the NSEs \cite{F00_NSproblem}. 

The following is an informal version of Theorem \ref{t:main_theorem_L2}. It shows that transport noise can considerably improve the well-posedness theory of the HNSEs compared to the deterministic setting. In particular, the following can be seen as a \emph{regularization by noise} result for the HNSEs. 
The physical relevance of transport noise in the context of HNSEs is discussed at the beginning of Subsection \ref{ss:regularity_by_noise} below.

\begin{theorem}[Global smooth solutions of HNSEs by transport noise -- Informal statement]
\label{t:intro}
Let 
$$
\g>1
$$
and fix a (bounded) ball $\mathcal{B}$ in $H^{1}(\T^3;\R^3)$. Then, 
for all $\varepsilon\in (0,1)$, there exist $\mu>0$, $
\theta\in \ell^2(\Z^3_0) $ and a family of divergence-free vector fields $(\sigma_{k,\alpha})_{k\in \Z^3_0,\alpha\in \{1,2\}}$ such that for all 
divergence-free vector field $u_0\in \mathcal{B}$ there exists a unique \emph{smooth} solution $u$ to the {\normalfont{HNSEs}} with transport noise \eqref{eq:hyper_NS} that is \emph{global in time} with probability larger than $1-\varepsilon$.
\end{theorem}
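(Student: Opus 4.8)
The plan is to realize this as a \emph{dissipation enhancement} statement in the spirit of Galeati, Flandoli, and Luo: the transport noise term, after the Stratonovich-to-It\^o correction, contributes a deterministic "eddy viscosity" term, and in the scaling limit where $\theta$ concentrates on high frequencies while the total $\ell^2$-mass stays fixed, the equation is driven toward the deterministic hyperdissipative NSE with an \emph{extra Laplacian} of size controlled from below. The first step is to write \eqref{eq:hyper_NS} in It\^o form: the Stratonovich correction from $\sum_{k,\alpha}\theta_k^2(\sigma_{k,\alpha}\cdot\nabla)^2$ produces, by the standard computation for the chosen Geurts--Holm covariance, a term of the form $c\mu\sum_k\theta_k^2\,\Delta u$ up to a lower-order (and bounded) commutator; choosing the normalization $\sqrt{3\mu/2}$ and $\|\theta\|_{\ell^2}=1$ one gets effective viscosity $\sim\mu$. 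So a priori the velocity satisfies, in It\^o form, $\partial_t u + (u\cdot\nabla)u + (-\Delta)^\g u - \mu\Delta u = -\nabla p + (\text{martingale})$. The point is then that for a solution living in a ball of $H^1$, the nonlinearity is controlled by $\|u\|_{H^1}\|u\|_{H^\g}$ type estimates, and with $\gamma>1$ the hyperviscous term alone would suffice for a short time; the added $\mu$-Laplacian buys more, but crucially the \emph{scaling-limit} mechanism lets us treat the quadratic term perturbatively on the whole time interval with high probability.

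The key steps, in order, are: \textbf{(i)} Fix the deterministic hyperviscous NSE $\partial_t v + (v\cdot\nabla)v + (-\Delta)^\gamma v = -\nabla q$, $v(0)=u_0$, and observe that for $\gamma>1$ it admits a unique local-in-time smooth solution $v$ on some $[0,T_0]$, with $T_0$ and all higher Sobolev norms of $v$ on $[0,T_0]$ bounded uniformly for $u_0$ in the ball $\mathcal B\subset H^1$ (standard energy/maximal-regularity estimates; here one uses $\gamma>1$ so that $H^\gamma$ controls $H^1$ with room to spare). \textbf{(ii)} Invoke the main quantitative dissipation-enhancement/scaling-limit theorem — this is where I expect the paper has a precise $L^2$-statement, Theorem~\ref{t:main_theorem_L2}: for the enhanced-viscosity equation $\partial_t u + (u\cdot\nabla)u + (-\Delta)^\gamma u - \nu\Delta u = -\nabla p$ with $\nu$ large, one has a \emph{global} smooth solution with quantitative bounds, obtained by an energy argument in which the full dissipation $(-\Delta)^\gamma + \nu(-\Delta)$ controls the nonlinearity globally once $\nu$ exceeds a threshold depending only on $\|u_0\|_{H^1}$; moreover the stochastic solution of \eqref{eq:hyper_NS} stays $\varepsilon$-close (in a strong topology, on $[0,\infty)$) to this deterministic enhanced equation with probability $\ge 1-\varepsilon$ provided $\theta$ is suitably concentrated at high frequency and $\mu$ suitably large. \textbf{(iii)} Combine: pick $\nu=\nu(\mathcal B,\varepsilon)$ from (ii), then pick $\mu$, $\theta$, $(\sigma_{k,\alpha})$ realizing effective viscosity $\ge\nu$ with the required concentration; then on the event of probability $\ge 1-\varepsilon$ the stochastic solution is global and smooth, and uniqueness follows from pathwise uniqueness in the class of smooth solutions (again because $\gamma>1$ gives enough parabolic smoothing, via stochastic maximal $L^p$-regularity for the transport-noise equation, as in the author's earlier work).

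The main obstacle, and the technical heart, is step \textbf{(ii)}: one must control the stochastic solution \emph{globally in time} rather than up to a blow-up time, and the standard dissipation-enhancement limit theorems are stated for \emph{linear} or globally-well-posed equations, whereas here the underlying deterministic equation $\gamma\ll 5/4$ is \emph{not} known to be globally well-posed. The resolution has to be self-improving/bootstrap in nature: one runs a stopping-time argument, letting $\tau$ be the first time the $H^1$-norm (or a higher norm) of $u$ exits a fixed ball of radius $R$; on $[0,\tau]$ the nonlinearity is controlled by the enhanced dissipation, so stochastic maximal regularity gives an a priori bound showing that, with high probability, the norm never actually reaches $R$ — hence $\tau=\infty$ on that event and the solution is global. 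Making this quantitative requires (a) a sharp commutator estimate for the Stratonovich corrector so that the "good" Laplacian is genuinely of order $\mu$ with an error that is $o(\mu)$ or at least absorbable, (b) a high-probability bound on the martingale part, e.g. via Burkholder-Davis-Gundy together with the $\ell^2$-smallness that accompanies high-frequency concentration (the It\^o-to-Stratonovich gap is precisely what makes the martingale negligible while the drift survives), and (c) propagating regularity from $H^1$ to $C^\infty$ via a parabolic bootstrap that is uniform in $\mu$. The choice $\gamma>1$ (strictly) rather than $\gamma\ge 1$ is used in an essential way at the bootstrap stage: it provides the strictly-parabolic smoothing that, combined with the enhanced Laplacian, closes the estimates in subcritical spaces.
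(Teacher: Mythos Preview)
Your broad strategy matches the paper's --- scaling limit toward a deterministic HNSE with an added $\mu$-Laplacian, global well-posedness of that limit for $\mu$ large, and high-probability proximity of the stochastic solution --- and your step~(i) about the \emph{un-enhanced} deterministic HNSE is simply unused (the paper works only with the enhanced limit~\eqref{eq:hyper_NS_intro_increased}). There is, however, a real gap: you assert closeness ``on $[0,\infty)$,'' but the scaling limit delivers this only on \emph{finite} intervals $[0,T]$; no choice of $\theta$ makes the martingale uniformly small over infinite time. The paper closes this with a two-stage argument (Subsection~\ref{ss:global_smooth_proof}): pick $T_0$ so large that the deterministic enhanced solution has exponentially decayed below a threshold $\delta_0$ (estimate~\eqref{eq:estimate_boundedness_high_viscosity_decay}); then, on the event where the stochastic solution is also $\delta_0$-small at time $T_0$, invoke a separate ``small data $\Rightarrow$ global with high probability'' result for the \emph{stochastic} HNSE (Theorem~\ref{t:small_implies_global}), whose constant depends on $\theta$ only through $\|\theta\|_{\ell^2}=1$. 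The scaling limit itself is run not via a stopping time on the original equation but on a globally well-posed \emph{cut-off} problem~\eqref{eq:hyper_NS_cut_off} (cut-off in a subcritical $H^r$ norm, Theorem~\ref{t:HNSEs_cutoff_global}), with the cut-off removed a posteriori on the good event.

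You also locate $\g>1$ at the smoothness bootstrap, but the paper uses it earlier and more essentially. Along the scaling limit the noise coefficients $(\theta_k^n\sigma_{k,\alpha})_{k,\alpha}$ are uniformly bounded only in $L^\infty(\ell^2)$ and blow up in every $H^r(\ell^2)$, $r>0$ (see~\eqref{eq:lack_regularity_coeff}); hence the uniform-in-$n$ a-priori estimate must live in the weak spatial setting $\Hs^{-\g}\!\leftrightarrow\!\Hs^\g$. An $L^2$-in-time estimate there only controls $C([0,T];L^2)$, which is \emph{supercritical} for $\g<\tfrac{5}{4}$; the paper needs stochastic maximal $L^p$-regularity with $p>\tfrac{4\g}{6\g-5}$ (Theorem~\ref{t:smr_theta}) so that the trace space $B^{\g(1-2/p)}_{2,p}$ is subcritical and the scaling limit lands in a topology strong enough to re-launch local well-posedness. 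That $L^p$-estimate is available precisely because, for $\g>1$, the transport noise is lower order than $(-\Delta)^\g$.
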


The choice of the vector fields $\sigma_{k,\alpha}$ is canonical and is given in Subsection \ref{ss:noise} while, for the one of $\theta\in \ell^2$, the reader is referred to 
Remark \ref{r:generalization} for some comments.

\smallskip

One of the main difficulties behind the proof of Theorem \ref{t:intro} is the subtle way in which transport noise affects the dynamics of the HNSEs and improves the available up-to-the-(possible)-explosion-time bounds compared to the deterministic setting. 
For instance, transport noise does \emph{not} alter the energy balance. Indeed,
since $\sigma_{k,\alpha}$ is divergence-free, it follows that, a.s.\ and for all $t<\tau$, 
\begin{equation}
\label{eq:energy_estimate_unchanged}
\frac{1}{2}\,\|u(t)\|_{L^2(\T^3;\R^3)}^2 + \int_0^t \|(-\Delta)^{\g/2} u(s)\|_{L^2(\T^3;\R^3)}^2\, \dd s =
\frac{1}{2}\,\|u_0\|_{L^2(\T^3;\R^3)}^2 . 
\end{equation} 
The above is precisely the energy balance observed in the deterministic setting.
Here, $\tau$ represents the explosion time of $u$ in the critical space $H^{5/2-2\g}(\T^3; \R^3)$ for well-posedness of \eqref{eq:hyper_NS} (however, as shown in Theorem \ref{t:local_HNSEs_hyper_small_viscosity}, the explosion time in $H^\delta(\T^3; \R^3)$ is independent of $\delta \geq \frac{5}{2} - 2\g$). A discussion on criticality is provided below \eqref{eq:scaling_HNSEs}.
The key to proving Theorem \ref{t:intro} lies in capturing the \emph{enhanced dissipation} effect of transport noise. We are able to rigorously capture this enhancement only when $\theta$ is carefully chosen, though there is some flexibility in its selection (see Remark \ref{r:generalization}). Further comments are given in Subsection \ref{ss:novelty}.

Before going further, let us give some more comments on Theorem \ref{t:intro}.
In the latter, solutions to the stochastic HNSEs \eqref{eq:hyper_NS} are smooth in the following sense: 
\begin{align*}
u\in  C_{\loc}^{\vartheta_0,\vartheta_1}((0,\tau)\times \T^3;\R^3)\ \text{ a.s.\ for all }\ \vartheta_0<\tfrac{1}{2}, \ \vartheta_1<\infty,
\end{align*}
where $C^{\vartheta_0,\vartheta_1}$ denotes the set of all maps that are locally $\vartheta_0$- (resp.\ $\vartheta_1$-)H\"older continuity in time (resp.\ space) uniformly in space (resp.\ time). The reader is referred to Subsection \ref{ss:notation} for a precise definition. Further regularity properties of solutions to \eqref{eq:hyper_NS} are given in Theorem \ref{t:local_HNSEs_hyper_small_viscosity}. 
Of course, in case $\g\geq \frac{5}{4}$, Theorem \ref{t:intro} can be improved, see Theorem \ref{t:local_HNSEs_hyper_high_viscosity}.
Finally, 
the choice of the space $H^1( \T^3;\R^3)$ in Theorem \ref{t:intro} is only for exposition convenience. Indeed, as Theorem \ref{t:main_theorem_L2} shows that the space $H^1( \T^3;\R^3)$ in the above result can be replaced by any \emph{subcritical} spaces of the form 
\begin{equation}
\label{eq:class_initial_data_revision}
H^{\delta}(\T^3;\R^3)\quad \text{ with } \quad \delta>\tfrac{5}{2}-2\g,
\end{equation}
see Subsection \ref{ss:novelty} for details on criticality. As commented in Remark \ref{r:generalization}, our arguments are flexible enough to deal with even rougher initial data. However, for the sake of clarity, we prefer to present our arguments in the simplest setting at the expense of the level of generality.

\smallskip

In the author's opinion, one of the interesting features of Theorem \ref{t:intro} is that, as $\g>1$ is arbitrary, it comes close to a regularization by noise result for 3D NSEs with a physical noise. Compared with the existing literature, the techniques used in this manuscript are robust enough to tackle the limiting case of NSEs, i.e., $\g=1$. 
Further comments on this are given in Subsections \ref{ss:regularity_by_noise} and \ref{ss:towards_NS} below.

\smallskip

For exposition convenience, we now split the discussion into several subsections.
Firstly, in Subsection \ref{ss:regularity_by_noise}, we provide some comments on the physical relevance of transport noise in fluid dynamics and a brief overview of regularization by noise results via transport noise. 
Secondly, in Subsection \ref{ss:novelty}, we discuss the novelties and the strategy used in the proof of Theorem \ref{t:intro}. In Subsection \ref{ss:towards_NS} we discuss the possibilities and challenges of extending Theorem \ref{t:intro} to the case of 3D NSEs. Finally, in Subsections \ref{ss:overview} and 
\ref{ss:notation}, we give an overview of the current manuscript and fix the notation, respectively.

\subsection{Physical relevance of transport noise and its regularizing properties}
\label{ss:regularity_by_noise}
Transport noise is by now a well-established model in stochastic fluid dynamics, see e.g., \cite{BCF91,BCF92,CP01,FlGa,FL32_book,FP20_small,HLN19,HLN21_annals} and the references therein. Rigorous justifications are given in \cite{DP22_two_scale,FlaPa21,M14_derivation,MR01,MR04}. 
The reader is referred to \cite[Subsection 1.2]{FL19} for a heuristic derivation of fluid dynamics models via separation of scales (see also \cite[Subsection 1.2]{A24_anomalous} for comments on the physical meaning of $\mu$). 
In the latter scenario, transport noise can be seen as an idealization of the effect of the \emph{small scales} of a fluid in a turbulent regime. Such derivation extends almost verbatim to the HNSEs, and therefore, we omit the details.

The possibility of preventing singularity formation in PDEs (and especially in models from fluid dynamics) via physically relevant noises has attracted a lot of attention in the last few decades.
It is not possible to give a complete overview here, and we mainly focus on the most relevant works for our purposes. In any case, we would like to mention the seminal work \cite{FGP10} due to Flandoli, Gubinelli and Priola on well-posedness by transport noise for transport equations and the recent result by Coghi and Maurelli \cite{CM23} where they showed that transport noise restores the uniqueness of the 2D Euler equation on the whole space in case of $L^p$-initial data for suitable $p<\infty$.
Coming back to our discussion, our approach can be seen as a refinement of that of Flandoli and Luo \cite{FL19}, where they obtained the global well-posedness of the 3D NSEs in \emph{vorticity} formulation by transport noise. 
Details on the novelties of our approach are given in Subsection \ref{ss:novelty} below. 
Let us recall that transport noise is not the physically relevant stochastic perturbation of the 3D NSEs in vorticity formulation, cf.\ \cite[Subsection 1.2 and Appendix 3]{FL19}. Indeed, although transport noise is well-motivated at the level of the velocity, in the case of vorticity formulation, a stochastic stretching term appears, and the arguments used in \cite{FL19} fail in the latter situation, see Subsection 1.2 and Appendix 2 there. Thus, the work \cite{FL19} does not provide a regularization by noise result for the 3D NSEs with a physically relevant noise, see \cite[Appendix 3]{FL19} for more on this.
The arguments in \cite{FL19} extend to a nonlinear setting the one in \cite{G20_convergence} by Galeati, where the possibility of enhancing the dissipation of a linear PDE via transport noise was first shown. 
After these works, the effect of transport noise on mixing, enhanced dissipation, and anomalous dissipation has been widely investigated, see e.g., \cite{A24_anomalous,FGL22_eddy,FGL21_quantitative,FL23_Boussinesq,FLL24,HPZZ23,L23_enhanced,LTZ24} and the references therein. Related results on regularization by noise can be found in \cite{A22,FGL21,FHLN20,L23_regularization}. 

\smallskip

It is worth noticing that the variational approach to stochastic fluid models by Holm \cite{H15_SVP} leads to a noise that not only contains a transport term, but also additional terms depending on the velocity field $u$. This noise has the advantage of being circulation-preserving, but it does not preserve energy, i.e., the identity \eqref{eq:energy_estimate_unchanged} does not hold if we consider these additional terms in the noise. A discussion on various stochastic models can be found in  \cite[Section 1]{BPW25} and \cite[Section 2]{DM25_PhysD}. 
Regularization properties of this type of noise are established in \cite{BFL24_magnetic,BFLT24,BFL24_magnetic2}.

\smallskip

To conclude this subsection, let us also mention that the recent results on existence of (very) weak solutions to Navier-Stokes equations in the presence of transport noise via stochastic convex integration \cite{HLP24_convex,P23_convex_NS} have raised some doubts about the possibility of improving well-posedness by such a noise in the case of 3D NSEs, see \cite{HB22_tame} for a discussion.
For results on HNSEs, the reader is referred to \cite{LRS24_LR,RS23_convex_hypo} for the case of hypoviscosity (i.e., $\g<1$) and to \cite{LT20_nonuniq} for the deterministic setting in case of hyperviscosity.
The current manuscript contributes to this picture from the opposite side. While transport noise cannot improve well-posedness in a (very) weak setting, it can prevent the buildup of singularities of \emph{strong} solutions of Navier-Stokes equations with hyperviscosity $\g>1$. Further discussion on the Navier-Stokes case $\g=1$ is postponed to Subsection \ref{ss:towards_NS}. 

\subsection{Novelties and strategy -- Improving scaling limits via $L^p$-estimates}
\label{ss:novelty}
In this subsection, we describe the novelty and the key ingredients in the proof of Theorem \ref{t:intro}.
The main novelties of our approach compared to the existing literature concern the use of stochastic maximal $L^p$-regularity estimates \emph{in time} and (sub)criticality in certain scaling limit arguments, cf.\ \cite[Theorem 1.4]{FL19} and Theorem \ref{t:scaling_limit_HNSE} below. 
The latter result is the key step in proving Theorem \ref{t:intro} and encodes the enhanced dissipation effect of transport noise.
In the following, we assume some familiarity with the main argument behind the scaling limits as in \cite[Theorem 1.4]{FL19} or Theorem \ref{t:scaling_limit_HNSE}.
As the discussion below shows, the need for the $L^p$-theory in the scaling limit is due to the supercriticality of the energy space $L^2$.

Before going into the details, let us collect some more remarks. Firstly, stochastic maximal $L^p$-regularity concerns optimal space-time regularity estimates for the \emph{linearized} version of \eqref{eq:hyper_NS}. Thus, proving the latter is (much) easier than obtaining estimates for the original nonlinear problem \eqref{eq:hyper_NS}.
The reader is referred to Section \ref{app:smr} and \cite[Subsection 1.2]{AV19_QSEE1} for details and references on stochastic maximal $L^p$-regularity. Secondly, some use of $L^p$-estimates was already present in \cite{A22,A24_anomalous}. However, in the current manuscript, the key role of the $L^p$ in \emph{time} estimates has been fully understood in the context of scaling limits.

\smallskip

Now, let us come back to the description of the novelties. 
The scaling limit result of Theorem \ref{t:scaling_limit_HNSE} examines the behaviour of solutions to \eqref{eq:hyper_NS} as $n\to \infty$ with 
\begin{equation}
\label{eq:theta_behaviour}
\|\theta^n\|_{\ell^\infty}\to 0 \qquad \text{ and } \qquad \|\theta^n\|_{\ell^2}\equiv 1;
\end{equation}
cf.\ \eqref{eq:choice_theta_n}-\eqref{eq:property_sequence_thetan}.
More precisely, Theorem \ref{t:scaling_limit_HNSE} establishes that the solution $u^n$ to (a modified version of) \eqref{eq:hyper_NS} with $\theta=\theta^n$ convergences in suitable function spaces to $\ud$ that solves the deterministic HNSEs with \emph{increased} viscosity:
\begin{equation}
\label{eq:hyper_NS_intro_increased}
\left\{
\begin{aligned}
\partial_t \ud &+ (\ud\cdot \nabla) \ud + (-\Delta)^\g \ud=\frac{3\mu}{5}\Delta \ud -\nabla p & \text{ on }&\T^3,\\
\nabla\cdot \ud&=0&\text{ on }&\T^3,\\
\ud(0,\cdot)&=u_0&\text{ on }&\T^3.
\end{aligned}
\right.
\end{equation} 

The limit as $n\to \infty$ for which \eqref{eq:theta_behaviour} holds is usually referred to as \emph{scaling limit} since, in a certain sense, corresponds to `zoom out from the small scales' that are modelled by the transport noise (see the discussion at the beginning of Subsection \ref{ss:regularity_by_noise}). Thus, as explained \cite[Subsection 2.2]{A22}, the scaling limit of Theorem \ref{t:scaling_limit_HNSE} can be understood as a homogenization result for the SPDE \eqref{eq:hyper_NS}, and \eqref{eq:hyper_NS_intro_increased} can be thought of as the `effective equation' for the problem \eqref{eq:hyper_NS}.

Note that the energy balance for solutions to \eqref{eq:hyper_NS_intro_increased} differs from that of solutions to \eqref{eq:hyper_NS}, i.e., \eqref{eq:energy_estimate_unchanged}. In particular, the convergence $u^n\to \ud$ \emph{cannot} hold in spaces with high smoothness. Further comments are given below \eqref{eq:smr_p_intro}.

\smallskip

To fully exploit the scaling limit argument described above for the stochastic HNSEs \eqref{eq:hyper_NS} with $1<\g\ll \frac{5}{4}$, one needs the following ingredients:
\begin{itemize}
\item  $u^n$ converges to $\ud$ in sufficiently `rich' function spaces and in which local well-posedness of the HNSEs holds.
\item 
The existence of $\mu\gg1$ for which \eqref{eq:hyper_NS_intro_increased} is globally well-posed.
\end{itemize}
As mentioned at the beginning of this subsection, the above points will be addressed by using
stochastic maximal $L^p$-time regularity and (sub)criticality.
To explain the need for $L^p$-techniques and the role of subcriticality, let us begin by discussing the scaling of \eqref{eq:hyper_NS}. 
As it is well known, the scaling analysis allows us to understand the minimal smoothness assumption needed for local well-posedness. 
One can check that, on $\R^3$, solutions to the deterministic HNSEs are invariant under the rescaling:
\begin{equation}
\label{eq:scaling_HNSEs}
u(t,x)\mapsto u_{\lambda} (t,x)\stackrel{{\rm def}}{=} \lambda^{1-\frac{1}{2\g}}u(\lambda t,\lambda^{\frac{1}{2\g}} x), \ \text{ for } (t,x)\in\R_+\times\R^3,
\end{equation} 
and $\lambda>0$. The above induces a map on the set of the initial data given by 
\begin{equation}
\label{eq:u0_u0lambda}
u_0\mapsto u_{0,\lambda}\stackrel{{\rm def}}{=} \lambda^{1-\frac{1}{2\g}}u_0(\lambda^{\frac{1}{2\g}} \cdot).
\end{equation}
Within the scale $(\dot{H}^{\delta}(\R^3;\R^3))_{\delta\in\R}$ of homogeneous Sobolev spaces, the only space that is invariant under the map in \eqref{eq:u0_u0lambda} is 
$
\dot{H}^{5/2-2\g}(\R^3) 
$. Indeed, for $\s_\g=5/2-2\g$,
$$
\|u_{0,\lambda}\|_{\dot{H}^{\s_\g}(\R^3;\R^3) }
=
\lambda^{1-\frac{1}{2\g}}\lambda^{\frac{1}{2\g}(\s_\g-\frac{3}{2})}\|u_{0}\|_{\dot{H}^{\s_\g}(\R^3;\R^3) }
= \|u_{0}\|_{\dot{H}^{\s_\g}(\R^3;\R^3) }.
$$
In the PDE literature, if such an invariance holds, then the corresponding space is called \emph{critical} for the given PDE.
In case $\R^3$ is replaced by $\T^3$ (or even a domain), the above argument works `locally in space', and therefore we can still refer to $H^{5/2-2\g}(\T^3;\R^3)$ as the critical for \eqref{eq:hyper_NS} and \eqref{eq:hyper_NS_intro_increased}. Interestingly, the criticality of such a space also holds if one uses the abstract theory of critical spaces due to Pr\"uss, Simonett, and Wilke \cite{CriticalQuasilinear}. The reader is referred to Theorem \ref{t:local_HNSEs_hyper_small_viscosity} for an application of the stochastic version of the latter theory developed in \cite{AV19_QSEE1,AV19_QSEE2}.
 
Coming back to the scaling limit argument, the convergence of $u^n$ to $\ud$ holds in sufficiently `rich' function spaces if and only if they are at least as `smooth' as $H^{5/2-2\g}(\T^3)$ (this fact can be made rigorous by looking at Sobolev indexes). For instance, in our manuscript, we measure the convergence of $u^n$ to $\ud$ in the space
\begin{equation}
\label{eq:convergence_CH}
C([0,T];H^{\delta}(\T^3;\R^3)) \quad \text{ for } \ \tfrac{5}{2}-2\g<\delta<\g,
\end{equation}
where $T<\infty$ is arbitrary. We prove the convergence of $u^n$ to $\ud$ in \eqref{eq:convergence_CH} via a compactness argument. This explains the subcritical condition $\delta>\frac{5}{2}-2\g$ as, in the latter regime, we can 
lose some regularity to obtain compactness while still being in sufficiently `rich' function spaces for local well-posedness.

The \emph{enemy} for the convergence $u^n$ to $\ud$ is the loss of regularity of the noise coefficients as $n\to \infty$. Indeed, if \eqref{eq:theta_behaviour} holds, then one can check that (see Subsection \ref{ss:noise} for the definition of $\sigma_{k,\alpha}$)
\begin{equation}
\label{eq:lack_regularity_coeff}
\sup_{n\geq 1} \|(\theta_k^n\sigma_{k,\alpha})_{k,\alpha}\|_{L^{\infty}(\T^3;\ell^2)}<\infty 
\quad \text{ and }\quad
\sup_{n\geq 1} \|(\theta_k^n\sigma_{k,\alpha})_{k,\alpha}\|_{H^{r}(\T^3;\ell^2)}=\infty
\end{equation}
for all $r>0$, see \cite[Proposition 4.1]{Arole25}.
Hence, to prove an a priori estimate for $u^n$ that is uniform in $n\geq 1$, the key is to use estimates which are valid in the presence of \emph{rough} $L^\infty$-transport noise and are strong enough to obtain an a priori bound in the space \eqref{eq:convergence_CH}. This is exactly what happens by using stochastic maximal $L^p$-regularity in time, cf.\ Section \ref{app:smr}. 
To see the key role of the $p$-integrability in time with $p\gg 2$, let us note that, the `maximal $L^p$-regularity space' for the \emph{weak setting} in the spatial variable $x\in \T^3$ of the HSNEs \eqref{eq:hyper_NS} is given by (cf.\ \eqref{eq:main_theorem_L2_regularity_1} in Theorem \ref{t:main_theorem_L2})
\begin{equation}
\label{eq:smr_p_intro}
H^{\vartheta,p}(0,T;H^{\g(1-2\vartheta)}(\T^3;\R^3))\cap 
L^p(0,T;H^{\g}(\T^3;\R^3)) \ \text{ for all } \ \vartheta\in [0,1/2).
\end{equation}
Here, we use the weak setting in the space variable as it is the only one that does not require any smoothness from the noise coefficients, and hence it is compatible with \eqref{eq:lack_regularity_coeff}.
Note that, by the trace theory of anisotropic spaces \cite[Theorem 1.2]{ALV21}, the space \eqref{eq:smr_p_intro} 
embeds into
\begin{equation}
\label{eq:convergence_CB}
C([0,T];B^{\g(1-2/p)}_{2,p}(\T^3;\R^3))
\end{equation}
where $B$ denotes a Besov space. Their appearance is due to standard results for real interpolation, see e.g., \cite[Chapter 5]{BeLo}. Note that, for all $\frac{5}{4}-2\g<\delta<\g$, we can choose $p<\infty$ such that 
$$
B^{\g(1-2/p)}_{2,p}(\T^3;\R^3)\embed H^{\delta}(\T^3;\R^3).
$$ 
Therefore, the maximal $L^p$-regularity in time and weak (in the PDE sense) in space is suitable to prove an a priori estimate that is uniform in $n$, still being compatible with the lack of uniform smoothness of the noise coefficients \eqref{eq:lack_regularity_coeff}.
Let us warn the reader that it is impossible to avoid the weak setting in the spatial variable.  
Indeed, even in the case of a scalar linear SPDE, if $(\theta^n)_{n}$ satisfies \eqref{eq:theta_behaviour} and the initial data $u_0$ is not constant, then a uniform in $n$ estimate along the scaling limit in a space stronger than the weak one leads to a contradiction with the energy balance of the limiting deterministic problem with increased viscosity \eqref{eq:hyper_NS_intro_increased}. The reader is referred to either \cite[Remark 3.4]{A24_anomalous} or \cite[Subsection 4.4.2]{Arole25} for details. 
Thus, the $ L^p$-approach used here is unavoidable.

Next, we discuss the global well-posedness of \eqref{eq:hyper_NS_intro_increased} for $\mu \gg 1$. Although the additional (eddy) dissipation term $\mu\Delta$ is of lower order compared to $(-\Delta)^\g$, its influence for large $\mu$ guarantees the global well-posedness of \eqref{eq:hyper_NS_intro_increased}. This result can be established within a Hilbert space framework, and the detailed proof is provided in Section \ref{s:global_high_viscosity}.

\smallskip

 Finally, let us mention that our class of initial data is larger compared to the one used in \cite{FL19}.  Indeed, as discussed around \eqref{eq:class_initial_data_revision}, we can allow initial data with vorticity of \emph{negative} regularity, i.e., $\mathrm{curl}\, u_0\in H^{\chi}(\T^3;\R^3)$ with $\chi>\frac{3}{2}-2\g$. Note that the regularity threshold $\frac{3}{2}-2\g$ is critical for the vorticity formulation. However, we stress that taking the $\mathrm{curl}$ in the HNSEs \eqref{eq:hyper_NS} does not yield the stochastic Navier-Stokes equations considered in \cite{FL19} (even in the case $\g=1$). Therefore, it is not possible to recover the results in the previously mentioned reference from the present manuscript.

\subsection{An eye towards the 3D Navier-Stokes equations}
\label{ss:towards_NS}
Unfortunately, we are not able to cover the case of the 3D NSEs, i.e., $\g=1$. 
However, in contrast to the approach via vorticity formulation in \cite{FL19} (see Appendix 2 there), there are several indications that our methods could also extend to the 3D NSEs with transport noise.
To elaborate on this point, let us recall that, as described in Subsection \ref{ss:novelty}, the central tool is the maximal $L^p$-regularity in time estimates of Section \ref{app:smr}. The reader can check that if Theorem \ref{t:smr_theta} were also true in the case $\g=1$, then the methods of the current work imply that Theorem \ref{t:intro} also holds in the case $\g=1$, i.e., for the 3D NSEs.
However, the case $\g=1$ of Theorem \ref{t:smr_theta} is more complicated than the case $\g>1$, as in the latter case, the transport noise acts as a lower-order term w.r.t.\ the leading differential operators. Thus, if $\g=1$ and $p\gg 2$  (see \cite[Theorem 3.2]{A24_anomalous} for $p\sim 2$), the proof of Theorem \ref{t:smr_theta} requires much more carefulness, and in some sense, $\g=1$ can be seen as a critical value for the estimate of Theorem \ref{t:smr_theta} and thus more sophisticated arguments are needed.

Next, we discuss the potential validity of the claims in Theorem \ref{t:smr_theta} in the critical case $\g=1$, for $p\gg 2$ (note that the case $p=2$ is an easy consequence of It\^o's formula, see e.g., \cite[Lemma 4.1]{AV24_variational}).
The main source of hope for its validity is that the leading operators in the corresponding Stokes equations (SEs in the following) with transport noise (i.e., \eqref{eq:hyper_NS_linear} with $\g=1$) are time-independent. This situation seems comparable with the case of maximal $L^p$-regularity estimates in the case of autonomous (i.e., time-independent) deterministic problems.
In the deterministic setting, via semigroup methods, it is well known that maximal $L^p$-regularity estimates for autonomous problems are \emph{independent} of $p$; see e.g., \cite{Dore}, \cite[Chapter 4]{pruss2016moving} or \cite[Chapter 17]{Analysis3}.
In other words, maximal $L^p$-regularity extrapolates from \emph{one} $p\in (1,\infty)$ to \emph{all} $p\in (1,\infty)$.
The proof of such extrapolation results typically involves harmonic analytic tools such as Calder\'on-Zygmund or Littlewood-Paley theory. 
In the stochastic case, recent results in the application of harmonic analysis to SPDEs have been obtained in \cite{KK20,LV21_singular}. However, in both cases, a kernel representation of the solution is needed, and the latter is not available for the SEs with transport noise. Therefore, their results do not apply to our situation. 
Finally, let us mention that, in the case of non-autonomous problems, maximal $L^p$-regularity does \emph{not} extrapolate \cite{BMV24}. The lack of extrapolation in non-autonomous PDEs is due to a wild in-time behaviour of the differential operator coefficients, and the latter is expected not to be displayed by a time-dependent noise.
Finally, it is worth pointing out that the arguments in the current manuscript show that stochastic $L^p$-regularity for the SEs with transport noise is required only for a \emph{single} value $p>4$. This fact can potentially allow for other variations of $L^p$-extrapolation, see e.g., \cite[Subsection 4.2]{S18_book}.

In any case, we hope the current work might stimulate further research in the context of extrapolation of maximal $L^p$-regularity estimates for SPDEs.

\subsection{Overview}
\label{ss:overview}
This work is organized as follows:

\begin{itemize}
\item Section \ref{s:statement}: Basic notation and statement of the main result -- Theorem \ref{t:main_theorem_L2}.
\item Section \ref{app:smr}: Stochastic maximal $L^p$-regularity estimates for HSEs with rough transport noise.
\item Section \ref{s:local_well_stoch_problem}: Existence, regularity and global well-posedness for small data for the stochastic HNSEs \eqref{eq:hyper_NS}.
\item Section \ref{s:global_high_viscosity}: Global well-posedness of \eqref{eq:hyper_NS_intro_increased} with high-viscosity $\mu\gg1$.
\item Section \ref{s:scaling_limit}: Proof of Theorem \ref{t:main_theorem_L2} via scaling limit in an $L^p$-setting.
\end{itemize}

\subsection{Notation}
\label{ss:notation}
Here, we collect the basic notation used in the manuscript.  
For given parameters $p_1,\dots,p_n$, we write $R(p_1,\dots,p_n)$ if the quantity $R$ depends only on $p_1,\dots,p_n$.
For two quantities $x$ and $y$, we write $x\lesssim y$, if there exists a constant $C$ such that $x\le Cy$. If such a $C$ depends on above-mentioned parameters $p_1,\dots,p_n$ we either mention it explicitly or indicate this by writing $C{(p_1,\dots,p_n)}$ and correspondingly $x\lesssim_{p_1,\dots,p_n}y$ whenever $x\le C{(p_1,\dots,p_n)}y$. We write $x\eqsim_{p_1,\dots,p_n} y$, whenever $x\lesssim_{p_1,\dots,p_n} y$ and $y\lesssim_{p_1,\dots,p_n}x$.

\smallskip

Below, $(\O, \mathscr{A},(\mathscr{F}_t)_{t\geq 0}, \P)$ denotes a filtered probability space carrying a sequence of independent standard Brownian motions which changes depending on the SPDE under consideration, and $\E[\cdot]=\int_{\O} \cdot \,\dd \P$ for the associated expected value. 
A process $\phi:[0,\infty)\times \O\to X$ is progressively measurable if $\phi|_{[0,t]\times \O}$ is $\Borel([0,t])\otimes \F_t$ measurable for all $t\geq 0$, where $\Borel$ is the Borel $\sigma$-algebra on $[0,t]$ and $X$ a Banach space. Moreover, a stopping time $\tau$ is a measurable map $\tau:\O\to [0,\infty]$ such that $\{\tau\leq t\}\in \F_t$ for all $t\geq 0$. Finally, a stochastic process $\phi:[0,\tau)\times \O\to X$ is progressively measurable if $\one_{[0,\tau)\times \O}\,\phi$ is progressively measurable where $
[0,\tau)\times \O\stackrel{{\rm def}}{=}\{(t,\om)\in[0,\infty)\times \O\,:\,\,0\leq t<\tau(\om)\}$
and $\one_{[0,\tau)\times \O}$ (or simply $\one_{[0,\tau)}$) stands for the extension by zero outside $[0,\tau)\times \O$. The definition of the stochastic intervals $(0,\tau)\times \O$ and $[0,\tau]\times \O$ is similar.

\smallskip

We write $L^p(S,\mu;X)$ for the Bochner space of strongly measurable, $p$-integrable $X$-valued functions for a measure space $(S,\mu)$, $p\in (1,\infty)$ and a Banach space $X$ as defined in \cite[Section 1.2b]{Analysis1}. Moreover, 
if $X=\R$, we write $L^p(S,\mu)$, and if it is clear which measure we refer to, we also leave out $\mu$. 
Finally, $\one_A$ denotes the indicator function of $A\subseteq S$.
For a Banach space $X$, $p\in (1,\infty)$ and $T\in (0,\infty]$, we denoted by $W^{1,p}(0,T;X)$ the set of all $f\in L^p(0,T;X)$ such that $f'\in L^p(0,T;X)$ endowed with the natural norm, see \cite[Section 2.5]{Analysis1} for distributional derivative of Banach valued maps. Moreover, for $\vartheta\in (0,1)$, we define the Bessel-potential space $H^{\vartheta,p}(0,T;X)$ as
$$
H^{\vartheta,p}(0,T;X)\stackrel{{\rm def}}{=}[L^p(0,T;X),W^{1,p}(0,T;X)]_{\vartheta},
$$
where $[\cdot,\cdot]_{\vartheta}$ denotes the complex interpolation method, see \cite{BeLo,Analysis1} for details. As usual, for an interval $I\subseteq [0,\infty)$, we say $f\in H^{\vartheta,p}_{\loc}(I;X)$ if $f\in H^{\vartheta,p}(J;X)$ for all compact sets $J\subseteq I$ (a similar notation is employed if $H^{\vartheta,p}$ is replaced by either $W^{1,p}$ or $L^p$). 

Next, we introduce the function spaces on $\T^d$, which will be used throughout the manuscript. As above, 
Bessel-potential spaces are indicated by $H^{s,q}(\T^d)$ where $s\in \R$ and $q\in (1,\infty)$. 
We also use the standard short-hand notation $H^{s,2}(\T^d)$ for $H^{s}(\T^d)$. We sometimes also employ Besov spaces $B^{s}_{q,p}(\T^d)$ which can be defined as the real interpolation space $(H^{-k,q}(\T^d),H^{k,q}(\T^d))_{(s+k)/2k,p}$ for $s\in \R$, $\N\ni k>|s|$ and $1<q,p<\infty$. The reader is again referred to \cite{BeLo,Analysis1} for details on interpolation and to \cite[Section 3.5.4]{schmeisser1987topics} for details on function spaces over $\T^d$. Moreover, we set $\mathcal{A}(\T^d;\R^k)\stackrel{{\rm def}}{=}(\mathcal{A}(\T^d))^k$ and $\mathcal{A}(\cdot)\stackrel{{\rm def}}{=}\mathcal{A}(\T^d;\cdot)$ where $\mathcal{A}\in \{L^q,H^{s,q},B^{s}_{q,p}\}$ and $k\in \N$.
Finally, for all $\vartheta_0,\vartheta_1>0$ and $t>0$, we let 
$$
C^{\vartheta_0,\vartheta_1}((0,t)\times \T^d)\stackrel{{\rm def}}{=}C^{\vartheta_0}(0,t;C(\T^d))\cap C([0,t];C^{\vartheta_1}(\T^d))
$$ 
and $C^{\vartheta_0,\vartheta_1}_{\loc}((0,t)\times \T^d)\stackrel{{\rm def}}{=}\cap_{\varepsilon>0}C^{\vartheta_0,\vartheta_1}((\varepsilon,t-\varepsilon)\times \T^d)$.

\section{Statement of the main result and strategy}
\label{s:statement}
This section is organized as follows. In Subsections \ref{ss:helmholtz_related} and \ref{ss:noise}, we collect some facts on function spaces of divergence-free vector fields and illustrate the structure of the noise in the HNSEs \eqref{eq:hyper_NS}, respectively. Finally, in Subsection \ref{ss:global_section}, we state the main result of the manuscript, namely Theorem \ref{t:main_theorem_L2}.

\subsection{Helmholtz projection and related function spaces}
\label{ss:helmholtz_related}
In this subsection, we introduce the Helmholtz projection $\p$ and its complement projection $\q$. For an $\R^d$-valued distribution $f=(f^i)_{i=1}^{d}\in \D'(\T^d;\R^d)$ on $\T^d$, let $\widehat{\,f^i\,}(k) =\langle e_k ,f^i\rangle $ be $k$-th Fourier coefficients, where $i\in \{1,\dots,d\}$, $k=(k_i)_{i=1}^d\in \Z^d$ and $e_k(x)=e^{2\pi \i k\cdot x}$. The Helmholtz projection $\p f$ for $f\in \D'(\T^d;\R^d)$ is given by
\begin{equation}
\label{eq:def_helmholtz_projection_revised}
(\widehat{ \p f})^i (k)\stackrel{{\rm def}}{=}\widehat{\,f^i\,}(k)- \sum_{1\leq j\leq d} \frac{k_i k_j }{|k|^2} \widehat{\,f^j\,}(k), \qquad 
(\widehat{ \p f})^i (0)\stackrel{{\rm def}}{=}\widehat{\,f^i\,}(0).
\end{equation}
Formally, $\p f$ can be written as $f- \nabla \Delta^{-1}(\nabla \cdot f)$. We set 
$$
\q\stackrel{{\rm def}}{=} \mathrm{Id}- \p.
$$  
From standard Fourier analysis, it follows that $\q$ and $\p$ restrict to bounded linear operators on $H^{s,q}(\T^d;\R^d)$ and $B^s_{q,p}(\T^d;\R^d)$ for $s\in \R$ and $q,p\in (1,\infty)$.
Finally, for $s\in \R$ and $q\in (1,\infty)$, we can introduce function spaces of divergence-free vector fields: 
\begin{align*}
\Hs^{s,q}(\T^d)\stackrel{{\rm def}}{=}\p(H^{s,q}(\T^d;\R^d)) \quad \text{ and }\quad 
\Ls^{q}(\T^d)\stackrel{{\rm def}}{=}\p(L^{q}(\T^d;\R^d)) ,
\end{align*}
endowed with the natural norms. As usual, we write $\Hs^{s}(\T^d)$ instead of $\Hs^{s,2}(\T^d)$. Similarly, for $p\in (1,\infty)$, one can define the space $\Bs^{s}_{q,p}(\T^d)$ of Besov spaces divergence vector fields and $\ell^2$-values function spaces.

\subsection{Structure of the noise}
\label{ss:noise}
In this subsection, we describe the structure of the noise we used in \eqref{eq:hyper_NS}. Recall that $\Z_0^3\stackrel{{\rm def}}{=}\Z^3\setminus\{0\}$.  
Through the manuscript, we say that $\theta=(\theta_k)_{k\in \Z^3_0}\in \ell^2$ is \emph{normalized} if $\|\theta\|_{\ell^2(\Z^3_0)}=1$ and \emph{radially symmetric} if 
\begin{equation*}
 \theta_{j}=\theta_k \  \text{ for all $j,k\in\Z_0^3$ \  such that }|j|=|k|.
\end{equation*}
Next, we define the family of vector fields $(\sigma_{k,\alpha})_{k,\alpha}$. 
Here and in the following, we use the shorthand subscript `$k,\alpha$' to indicate $k\in \Z^3_0,\, \alpha\in\{1,2\}$. 

Let $\Z_{+}^3$ and $\Z_-^3$ be a  partition of $\Z_0^3$ such that $-\Z_+^3=\Z_-^3$. For any $k\in \Z_+^3$, let $\{a_{k,1},a_{k,2}\}$ be a complete orthonormal basis of the hyperplane $k^{\bot}=\{k'\in \R^3\,:\, k\cdot k'=0\}$, 
and set $a_{k,\alpha}\stackrel{{\rm def}}{=}a_{-k,\alpha} $ for $k\in \Z^3_-$. Finally, we let
\begin{equation*}
\sigma_{k,\alpha}\stackrel{{\rm def}}{=} a_{k,\alpha} e^{2\pi \i k\cdot x}  \ \  \text{ for all } \ \ x\in \Tor^3,\ k\in \Z^3_0,\ \alpha\in \{1,2\}.
\end{equation*}
By construction, the vector field $\sigma_{k,\alpha}$ is smooth and divergence-free for all $k,\alpha$.

Finally, we introduce the family of complex Brownian motions $(W^{k,\alpha})_{k,\alpha}$. 
Let $(B^{k,\alpha})_{k,\alpha}$ be a family of independent standard (real) Brownian motions on the above-mentioned filtered probability space $(\O,\A,(\F_t)_{t\geq 0},\P)$. Then, we set
\begin{equation}
\label{eq:complex_BM}
W^{k,\alpha}\stackrel{{\rm def}}{=}
\left\{
\begin{aligned}
&B^{k,\alpha}+ \i \,B^{-k,\alpha},& \qquad& k\in \Z^3_+,\\
&B^{-k,\alpha}- \i \, B^{k,\alpha}, &\qquad& k\in \Z^3_-.
\end{aligned}
\right.
\end{equation}
In particular $\overline{W^{k,\alpha}}= W^{-k,\alpha}$ for all $k,\alpha$.

Next, let us transform the Stratonovich formulation of the noise in \eqref{eq:hyper_NS} into an It\^o-one.
As in \cite[Section 2.3]{FL19}, due to the divergence-free of $\sigma_{k,\alpha}$, at least formally, for all radially symmetric $\theta\in \ell^2$, 
\begin{align*}
\sqrt{\frac{3\mu}{2}}
\sum_{k,\alpha}\theta_k\, \p [ (\sigma_{k,\alpha}\cdot \nabla) u]\circ \dot{W}_t^{k,\alpha}
&=
\LL u+\sqrt{\frac{3\mu}{2}}
\sum_{k,\alpha}\theta_k \,\p [ (\sigma_{k,\alpha}\cdot \nabla) u]\, \dot{W}_t^{k,\alpha},
\end{align*}
where 
\begin{equation}
\label{eq:Ito_stratonovich_change}
\LL u\stackrel{{\rm def}}{=}
\frac{3\mu}{2}
\sum_{k,\alpha}\theta_k^2 \,\p \big[ \nabla \cdot ( \p[(\sigma_{k,\alpha}\cdot \nabla)u]\otimes \sigma_{-k,\alpha}\big)\big].
\end{equation}
Here, as usual, $\nabla \cdot A= (\partial_j A_{i,j})_{j=1}^3$ for a matrix valued map $A=(A_{i,j})_{i,j=1}^3$.

Integrating by parts, one can check that $\LL $ is a negative operator for all $\theta\in \ell^2$, i.e., $\langle \LL v,v \rangle_{H^{-1},H^1}\leq 0$ for all $v\in \Hs^1(\T^3)$. However, let us stress that the operator $\LL$ does \emph{not} create any additional dissipation, as the dissipative effect of $\LL $ is balanced by the energy creation of the transport noise in It\^o-form, cf.\ \eqref{eq:energy_estimate_unchanged}.

\subsection{Global existence of smooth solutions of HNSEs by transport noise}
\label{ss:global_section}
Before stating our main result, let us introduce the notion of solutions to \eqref{eq:hyper_NS}. 
We begin by recalling that the sequence of complex Brownian motions $(W^{k,\alpha})_{k,\alpha}$ induces an $\ell^2$-cylindrical Brownian motion $\mathcal{W}_{\ell^2}$ via the formula (cf.\ e.g., \cite[Example 2.12]{AV19_QSEE1})
\begin{equation}
\label{eq:cylindrical}
\mathcal{W}_{\ell^2}(f)=\sum_{k,\alpha}\int_{\R_+} f_{k,\alpha}(t)\,\dd W^{k,\alpha}_t 
\ \ \  \text{ for } \ \ \
f=(f_{k,\alpha})_{k,\alpha}\in L^2(\R_+;\ell^2).
\end{equation}
Note that $
\mathcal{W}_{\ell^2}(f)$ is real-valued if $f_{k,\alpha}=f_{-k,\alpha}$ as $\overline{W^{k,\alpha}}= W^{-k,\alpha}$ for all $k,\alpha$. Using this and the symmetry of $\sigma_{k,\alpha}$ under the reflection $k\mapsto -k$, one can check that the stochastic perturbation in \eqref{eq:hyper_NS} can be rewritten only using real-valued coefficients and the real-valued Brownian motions $(B^{k,\alpha})_{k,\alpha}$ in \eqref{eq:complex_BM}, cf.\ \cite[Subsection 2.3]{A24_anomalous} and Subsection \ref{ss:proofs_local} below.
Hence, solutions to \eqref{eq:hyper_NS} are naturally real-valued. 
Finally, as usual, to accommodate the weak analytic setting, we rewrite the convective terms $(u\cdot\nabla)u$ as $\nabla \cdot (u\otimes u)$ due to the divergence-free condition.

\begin{definition}[Local, unique and maximal $p$-solutions to HNSEs]
\label{def:p_solution} 
Assume that $u_0\in L^0_{\F_0}(\O;\Ls^2(\T^3))$ and $\theta=(\theta_k)_{k\in \Z^3_0}\in \ell^2$ is radially symmetric, and fix $p\in [2,\infty)$. Let $\tau:\O\to [0,\infty]$ and $u:[0,\tau)\times \O\to \Hs^\g(\T^3) $ 
be a stopping time and a progressive measurable process, respectively.

\begin{itemize}
\item We say that $(u,\tau)$ is a \emph{local $p$-solution} to \eqref{eq:hyper_NS} if there exists a sequence of stopping times $(\tau_n)_{n\geq 1}$ such that $ \tau_n\uparrow \tau$ a.s.\ for which the following are satisfied for all $n\geq 1$:
\begin{enumerate}[{\rm(a)}]
\vspace{0.1cm}
\item\label{it:integrability_1} $u\in L^p(0,\tau_n;\Hs^\g(\T^3))$ a.s.;
\vspace{0.1cm}
\item\label{it:integrability_2} $\nabla \cdot (u\otimes u) \in L^p(0,\tau_n;H^{-\g}(\T^3;\R^{3}))$ a.s.;
\vspace{0.1cm}
\item\label{it:integrability_3} a.s.\ for all $t\in [0,\tau_n]$ it holds that 
\begin{align*}
u(t)-u_0
= &\int_0^t \Big(\LL u(s) -(-\Delta)^\g u(s) - \p[\nabla \cdot (u(s)\otimes u(s))]\Big) \,\dd s\\
+ &\, \sqrt{\frac{3\mu}{2}}
\int_0^t \one_{[0,\tau_n]}\Big(\p[(\theta_k\,\sigma_{k,\alpha}\cdot\nabla)u(s)\big]\Big)_ {k,\alpha}\,\dd 
\mathcal{W}_{\ell^2}.
\end{align*}
\end{enumerate}
\item A local $p$-solution $(u,\tau)$ to \eqref{eq:hyper_NS} is said to be a \emph{unique (pathwise) local $p$-solution} to \eqref{eq:hyper_NS} if for any local solution $(u',\tau')$ we have $u'=u$ a.e.\ on $[0,\tau'\wedge \tau)\times \O$. 
\item A unique local $p$-solution $(u,\tau)$ to \eqref{eq:hyper_NS} is said to be a \emph{unique maximal $p$-solution} to \eqref{eq:hyper_NS} if for any local solution $(u',\tau')$ we have $\tau'\leq \tau$ a.s.\ and $u'=u$ a.e.\ on $[0,\tau')\times \O$. 
\end{itemize}
\end{definition}

Due to \eqref{it:integrability_1}-\eqref{it:integrability_2} and $\theta\in \ell^2$, the deterministic and stochastic integrals in \eqref{it:integrability_3} are well-defined as $\Hs^{-\g}(\T^3)$-valued Bochner and $\Ls^2(\T^3)$-valued It\^o integrals, respectively. Thus, the equality in \eqref{it:integrability_3} is understood as an identity in $\Hs^{-\g}(\T^3)$.

We emphasize that the integrability condition in \eqref{it:integrability_2} is a requirement for a progressively measurable process $u$ to be a local $p$-solution, and it does not follow from \eqref{it:integrability_1}. However, from the fact that the process $u$ takes values in $\Hs^\g(\T^3)$ a.e.\ on $[0,\tau)\times \O$ and Lemma \ref{lem:nonlinearity_estimate}, it follows that $\nabla \cdot (u\otimes u)\in H^{-\g}(\T^3;\R^3)$ a.e.\ on $[0,\tau)\times \O$. Hence, the integrability demand \eqref{it:integrability_2} is well-defined as it concerns the a.s.\ integrability of the mapping $[0,\tau_n]\ni t\mapsto \|\nabla \cdot (u(t,\cdot)\otimes u(t,\cdot))\|^p_{H^{-\g}(\T^3;\R^{3})}$ for $n\geq 1$, cf.\ \cite[Proposition 1.2.2]{Analysis1}.
Alternatively, under the additional  assumptions $p\geq \frac{4\g}{6\g-5}$ and $u_0\in \Bs^{\g(1-2/p)}_{2,p}(\T^3)$, which will be enforced throughout this manuscript (see also the text below \eqref{eq:smr_p_intro}-\eqref{eq:convergence_CB} for comments), one can replace the condition \eqref{it:integrability_2} in Definition \ref{def:p_solution} by
\begin{itemize}
\item
$u\in C([0,\tau_n];\Bs^{\g(1-2/p)}_{2,p}(\T^3))$ a.s.
\end{itemize} 
Indeed, on the one hand, for $\beta$ as in Lemma \ref{lem:nonlinearity_estimate}, one can check that interpolating \eqref{it:integrability_1} and the above item, one obtains $u\in L^{2p}(0,\tau_n;\Hs^{\beta}(\T^3))$ a.s. The latter implies \eqref{it:integrability_2} by Lemma \ref{lem:nonlinearity_estimate} again. On the other hand, the conditions
\eqref{it:integrability_1}-\eqref{it:integrability_2}, the stochastic maximal $L^p$-regularity estimate of Theorem \ref{t:smr_theta} for the linearized problem of \eqref{eq:hyper_NS} below, and a localization argument (see e.g., \cite[Proposition 3.11]{AV25_survey}) imply 
$u\in C([0,\tau_n];\Bs^{\g(1-2/p)}_{2,p}(\T^3))$ a.s. 
Here, we prefer the formulation as given in Definition \ref{def:p_solution} as it does not directly involve Besov spaces.

\smallskip

The main result of the current manuscript reads as follows.

\begin{theorem}[Global smooth solutions by transport noise of HNSEs]
\label{t:main_theorem_L2}
Let $1<\g<\frac{5}{4}$. Fix $\frac{4\g}{6\g-5}<p<\infty$,
$$
\varepsilon\in (0,1)\quad \text{ and }\quad N\geq 1.
$$
Then there exist $\mu>0$ and $\theta\in \ell^2$ such that $\#\,\supp\theta<\infty$ for which the following assertion holds. 
For all initial data $u_0$ satisfying 
$$
u_0\in \Bs^{\g(1-2/p)}_{2,p}(\T^3)\qquad \text{ and }\qquad \|u_0\|_{B^{\g(1-2/p)}_{2,p}(\T^3;\R^3)}\leq N,
$$
there exists a \emph{unique} maximal $p$-solution $(u,\tau)$ to the stochastic {\normalfont{HNSEs}} \eqref{eq:hyper_NS} that is \emph{global in time} with high probability:
\begin{equation}
\label{eq:global_high_probability}
\P(\tau=\infty)>1-\varepsilon.
\end{equation}
Moreover, $(u,\tau)$ is regular up to the initial time and instantaneously gains regularity in time and space:
\begin{align}
\label{eq:main_theorem_L2_regularity_1}
u&\in H^{\vartheta,p}_{\loc}([0,\tau);H^{\g(1-2\vartheta)}(\T^3;\R^3))\ \text{ a.s.\ for all }\vartheta\in [0,1/2),\\
\label{eq:main_theorem_L2_regularity_2}
u&\in C([0,\tau);B^{\g(1-2/p)}_{2,p}(\T^3;\R^3))\  \text{ a.s.},\\
\label{eq:main_theorem_L2_regularity_3}
u&\in C^{\vartheta_0,\vartheta_1}_{\loc}((0,\tau)\times \T^3;\R^3)\ \text{ a.s.\ for all }\vartheta_0\in [0,1/2), \, \vartheta_1<\infty.
\end{align}
\end{theorem}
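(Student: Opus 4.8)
The plan is to combine three ingredients whose statements are announced earlier in the paper: (i) local well-posedness, regularity and small-data global existence for the stochastic HNSEs \eqref{eq:hyper_NS} in the $L^p$-setting (Theorem~\ref{t:local_HNSEs_hyper_small_viscosity}); (ii) global well-posedness of the deterministic limit problem \eqref{eq:hyper_NS_intro_increased} with increased viscosity $\frac{3\mu}{5}\Delta$ for $\mu\gg1$ in a subcritical space (the content of Section~\ref{s:global_high_viscosity}); and (iii) the scaling-limit result (Theorem~\ref{t:scaling_limit_HNSE}) asserting that the solutions $u^n$ of the modified \eqref{eq:hyper_NS} with $\theta=\theta^n$ satisfying \eqref{eq:theta_behaviour} converge to $\ud$ in $C([0,T];H^\delta(\T^3;\R^3))$ for $\tfrac52-2\g<\delta<\g$, uniformly over $u_0$ in the fixed ball. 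The architecture is the classical one of Flandoli--Luo: first fix $\mu$ large enough that $\ud$ is global and \emph{uniformly bounded} on the whole time axis for all data in the ball $\{\|u_0\|_{B^{\g(1-2/p)}_{2,p}}\le N\}$; then use the scaling limit to transfer this a~priori control to $u^n$ for $n$ large; then use local well-posedness with a blow-up criterion to convert the transferred control into a lower bound on the probability $\P(\tau^n=\infty)$.

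First I would make the $\mu$, $\delta$ and $p$ bookkeeping precise. Fix $\delta$ with $\tfrac52-2\g<\delta<\g$; since $p>\tfrac{4\g}{6\g-5}$ one checks $\g(1-2/p)>\tfrac52-2\g$, so the data space $B^{\g(1-2/p)}_{2,p}$ embeds (up to shrinking $\delta$) into $H^\delta$, which is subcritical for \eqref{eq:hyper_NS_intro_increased}. By Section~\ref{s:global_high_viscosity} there is $\mu=\mu(N,\g,p)$ and a radius $R=R(N,\g,p)$ such that every solution of \eqref{eq:hyper_NS_intro_increased} with $\|u_0\|_{H^\delta}\le N$ satisfies $\sup_{t\ge0}\|\ud(t)\|_{H^\delta}\le R/2$; moreover, by parabolic smoothing for \eqref{eq:hyper_NS_intro_increased}, $\ud$ is smooth for $t>0$ with bounds depending only on $N$. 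Next, invoke Theorem~\ref{t:scaling_limit_HNSE} with $T$ arbitrary: there is $n_*=n_*(N,R,T,\mu,\g,p)$ such that for $n\ge n_*$,
\begin{equation*}
\P\Big(\sup_{t\in[0,T\wedge\tau^n)}\|u^n(t)\|_{H^\delta}\le R\Big)>1-\varepsilon,
\end{equation*}
where $\tau^n$ is the maximal existence time of $u^n$. Here one must be slightly careful: the scaling limit is stated for solutions that may a~priori blow up, so the convergence is really a convergence in law (or in probability) of suitably stopped processes, and the probability estimate above is obtained by combining the $C([0,T];H^\delta)$-convergence with the uniform bound on $\ud$, plus a Portmanteau/continuous-mapping argument together with Markov's inequality on the residual.

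Then I would run the global-existence bootstrap. By the blow-up criterion accompanying Theorem~\ref{t:local_HNSEs_hyper_small_viscosity} (a Serrin-type / critical-norm criterion for the $L^p$-maximal-regularity solution), on the event $\{\tau^n<\infty\}$ one has $\limsup_{t\uparrow\tau^n}\|u^n(t)\|_{H^\delta}=\infty$ (or the relevant critical norm blows up, which the subcritical $H^\delta$-bound controls). Hence
\begin{equation*}
\big\{\tau^n\le T\big\}\subseteq\big\{\sup_{t\in[0,T\wedge\tau^n)}\|u^n(t)\|_{H^\delta}>R\big\},
\end{equation*}
so $\P(\tau^n>T)>1-\varepsilon$ for $n\ge n_*(T)$. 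Letting $T\to\infty$ along an increasing sequence and using that $n_*$ can be chosen nondecreasing in $T$, we fix one $n$, set $\theta=\theta^n$ (which has finite support by the construction \eqref{eq:choice_theta_n}) and $\mu$ as above, and conclude $\P(\tau=\infty)\ge\lim_{T\to\infty}\P(\tau>T)\ge 1-\varepsilon$, which is \eqref{eq:global_high_probability}. The regularity claims \eqref{eq:main_theorem_L2_regularity_1}--\eqref{eq:main_theorem_L2_regularity_3} are not new here: \eqref{eq:main_theorem_L2_regularity_1} and \eqref{eq:main_theorem_L2_regularity_2} are the maximal-$L^p$-regularity and trace-embedding statements of Theorem~\ref{t:local_HNSEs_hyper_small_viscosity}, and \eqref{eq:main_theorem_L2_regularity_3} follows from the instantaneous parabolic smoothing in that theorem (iterating the $L^p$-regularity/bootstrap on $(\varepsilon,\tau)$ and Sobolev embedding into H\"older spaces), so I would simply quote it.

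The main obstacle is the transfer step, i.e.\ extracting the \emph{uniform-in-$n$, high-probability} a~priori bound in $C([0,T];H^\delta)$ from the scaling limit and correctly interfacing it with the blow-up criterion. Two points need care: the scaling-limit convergence is for possibly-exploding solutions, so one works with the processes $u^n$ killed at $\tau^n$ and must ensure the limiting object is genuinely $\ud$ on all of $[0,T]$ (no mass escaping to the blow-up set)—this is exactly why one first secures the \emph{global} uniform bound on $\ud$, so that the limit cannot explode and the stopped and unstopped processes agree in the limit; and the blow-up criterion must be phrased in the subcritical norm $H^\delta$ (or a norm dominated by it), which is legitimate precisely because $\delta>\tfrac52-2\g$ is subcritical for \eqref{eq:hyper_NS}. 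Everything else—the choice of $\mu$, the embedding inequalities among $B^{\g(1-2/p)}_{2,p}$, $H^\delta$ and $H^\g$, and the passage $T\to\infty$—is routine given the cited results.
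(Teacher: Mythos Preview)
Your overall architecture is right up to the last step, but the passage from finite $T$ to $T=\infty$ is a genuine gap. You obtain, for each $T$, an index $n_*(T)$ such that $\P(\tau^n>T)>1-\varepsilon$ whenever $n\ge n_*(T)$. You then write ``letting $T\to\infty$ \dots\ we fix one $n$ \dots\ and conclude $\P(\tau=\infty)\ge\lim_{T\to\infty}\P(\tau>T)\ge1-\varepsilon$.'' This is a quantifier swap: nothing prevents $n_*(T)\to\infty$ as $T\to\infty$, and indeed the scaling limit gives no uniform-in-$T$ control. For a \emph{fixed} $n$ you only know $\P(\tau^n>T)>1-\varepsilon$ for those $T$ with $n\ge n_*(T)$, i.e., for $T$ bounded. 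So you have not produced a single $\theta$ with $\P(\tau=\infty)>1-\varepsilon$.

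The paper closes this gap by a different mechanism that you do not invoke: the exponential decay \eqref{eq:estimate_boundedness_high_viscosity_decay} of the deterministic limit $\ud$ and the small-data global result (Theorem~\ref{t:small_implies_global}). One first chooses a single finite $T_0$ so large that $\|\ud(T_0)-\overline{u_0}\|$ is below the smallness threshold $\delta_0$ of Theorem~\ref{t:small_implies_global}; the scaling limit (applied only on $[0,T_0]$, so a single $n$ suffices) then gives, with probability $>1-\varepsilon/2$, that $\tau>T_0$ and $\|u(T_0)-\overline{u_0}\|\le\delta_0$. From that random time $T_0$ one restarts and applies Theorem~\ref{t:small_implies_global} (whose $\delta_0$ depends on $\theta$ only through $\|\theta\|_{\ell^2}=1$, so it is uniform in $n$) to continue globally with probability $>1-\varepsilon/2$. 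A union bound gives \eqref{eq:global_high_probability}. Your proposal is missing both the decay and the small-data restart; without them the $T\to\infty$ step does not go through. A secondary point: the paper's scaling limit is stated for the \emph{cut-off} problem \eqref{eq:hyper_NS_cut_off}, which is globally well-posed, so no stopping is needed in the limit argument; the identification with $(u,\tau)$ on the good event is handled afterwards via the cut-off radius. Your remarks about stopped processes and Portmanteau are not wrong in spirit but are not how the paper organizes the transfer.
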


The proof of the above is given in Subsection \ref{ss:global_smooth_proof}.  
The main difficulty behind Theorem \ref{t:main_theorem_L2} is to obtain \eqref{eq:global_high_probability} for unique $p$-solutions to \eqref{eq:hyper_NS}. Indeed, the local well-posedness and instantaneous regularization of solutions to the stochastic HNSEs \eqref{eq:hyper_NS} readily follow from the theory of stochastic evolution equations in critical spaces, see \cite{AV19_QSEE1,AV19_QSEE2} or \cite{AV25_survey}. Some details are given in Section \ref{s:local_well_stoch_problem}.
The global well-posedness with high probability in \eqref{eq:global_high_probability} is a consequence of the scaling limit result of Theorem \ref{t:scaling_limit_HNSE}. The proof of the latter result is given at the end of Section \ref{s:scaling_limit}.

Note that, for all $\frac{5}{2}-2\g<\delta<\g$ and $\frac{4\g}{6\g-5}<p<\frac{2\g}{\g-\delta}$, it holds that 
$$
H^{\delta}(\T^3;\R^3)\embed B^{\g(1-2/p)}_{2,p}(\T^3;\R^3).
$$
Therefore, the result of Theorem \ref{t:main_theorem_L2} also applies to initial data in $H^{\delta}$ with $\delta>\frac{5}{2}-2\g$. In this case, \eqref{eq:main_theorem_L2_regularity_2} can be strengthened to $u\in C([0,\tau);\Hs^\delta(\T^3))$ a.s. Although it is possible to state Theorem \ref{t:main_theorem_L2} solely in terms of $H^\delta$-data, we prefer to present it in terms of initial data in the space $B^{\g(1-2/p)}_{2,p}$, as the proof of Theorem \ref{t:main_theorem_L2} essentially uses on $L^p$-techniques (see Subsection \ref{ss:novelty}), where Besov spaces naturally arise.

We conclude this section by gathering some observations in the following

\begin{remark}\
\label{r:generalization}
\begin{itemize}
\item 
By using $L^q$-theory in the spatial variable with $q\neq p$, one can extend Theorem \ref{t:main_theorem_L2} to \emph{rougher} initial data, e.g., $u_0\in \Ls^q(\T^3)$ with $q>\frac{3}{2\g-1}$ (note that the integrability $\frac{3}{2\g-1}$ is critical for the HNSEs, cf.\ Subsection \ref{ss:novelty}).  
\item 
The choice of $\theta$ in Theorem \ref{t:main_theorem_L2} is not unique. Indeed, as in \cite{FL19}, for all $\alpha>0$, the proof of the latter shows that there exists $n_0$ depending only on $\g,p,\varepsilon,N$ and $\alpha$ such that \eqref{eq:global_high_probability}-\eqref{eq:main_theorem_L2_regularity_3} hold with $\theta=\theta^n$ for all $n\geq n_0$ where
$$
\theta^n =\frac{\Theta^n}{\|\Theta^n\|_{\ell^2}}, \qquad  \Theta^n_k =\frac{1}{|k|^{\alpha}}\,\one_{\{n \leq |k|\leq 2n\} } \ \text{ for }\ k\in \Z^3_0.
$$
\item Theorem \ref{t:main_theorem_L2} also holds if $(-\Delta)^{\g}$ is replaced by $\nu(-\Delta)^{\g}$ with $\nu>0$. In the latter case, the choice of $\theta$ depends on $\nu>0$ (see \cite[Subsection 1.2]{A24_anomalous} for physical motivation of $\nu$-dependent transport noise).
\end{itemize}
\end{remark}

\section{Maximal $L^p$-estimates for HSEs with $L^\infty$-transport noise}
\label{app:smr}
In this section, we prove the main linear estimate needed in the proof of Theorem \ref{t:main_theorem_L2}. More precisely, we establish maximal $L^p$-regularity estimates for the hyperviscous Stokes equations (HSEs in the following) with $L^{\infty}$-transport noise:  
\begin{equation}
\label{eq:hyper_NS_linear}
\left\{
\begin{aligned}
\partial_t v
+ (-\Delta)^{\g} v 
 &= (w\cdot\nabla) v+ \LLxi_{\xi}(v)+ f + \sum_{n\geq 1}\big( \p [ (\xi_{n}\cdot\nabla) v]+ g_{n} \big)\, \dot{W}^{n}&\text{on }&\T^d,
\\
v(s,\cdot)&=v_s&\text{on }&\T^d,
\end{aligned}
\right.
\end{equation}
where $s\geq 0$, $w\in \R^d$, $(\xi_n)_{n\geq 1}$ is an $\ell^2$-bounded family of vector fields, $(W^n)_{n\geq 1}$ is a sequence of standard independent Brownian motions on a filtered probability space (which might differ from the one used in the main body of this work, but is still denoted by $(\O,\mathscr{A},(\F_t)_{t\geq 0},\P)$), 
\begin{equation}
\LLxi_\xi (v) \stackrel{{\rm def}}{=}\frac{1}{2}\sum_{n\geq 1} \p\Big[\nabla \cdot\big( \p[(\xi_n\cdot\nabla)v]\otimes \xi_n\big)\Big],
\end{equation}
and $\p$ is the Helmholtz projection, see \eqref{eq:def_helmholtz_projection_revised}.
As in Subsection \ref{ss:noise}, if $\nabla\cdot \xi_n=0$ in $\D'(\T^d)$ and $g_n\equiv 0$ for all $n\geq 1$, then the problem \eqref{eq:hyper_NS_linear} is the  (formal) It\^o reformulation of the Stratonovich SPDE:
$$
\partial_t v
+ (-\Delta)^{\g} v 
 = (w\cdot\nabla) v+  f + \sum_{n\geq 1} \p [ (\xi_{n}\cdot\nabla) v]\circ \dot{W}^{n} \ \ \text{ on }\T^d,
$$
In particular, the problem \eqref{eq:hyper_NS_linear} is the linearization of the HSNEs \eqref{eq:hyper_NS}.
The additional transport term $(w\cdot\nabla)v$ is useful in handling initial data with non-zero mean, see also the proof of Theorem \ref{t:small_implies_global}. 

\smallskip

Throughout this section, we assume that $\xi_n :\R_+\times\O\times \T^d\to \R^d$ is progressively measurable for all $n\geq 1$, and there exists a constant $N\geq 1$ for which 
\begin{equation}
\label{eq:boundedness_xi}
\|(\xi_n)_{n\geq 1}\|_{\ell^2}\leq N \ \ \text{ a.e.\ on }\ \R_+\times \O\times \T^d.
\end{equation}
We now define solutions to \eqref{eq:hyper_NS_linear} similarly to the one of \eqref{eq:hyper_NS} given in Definition \ref{def:p_solution}.
Fix $s\geq 0$, $v_s\in L^0_{\F_s}(\O;\Ls^2(\T^d))$ and a stopping time $\tau:\O\to [s,\infty]$.
A progressively measurable process $v:[s,\tau]\times \O\to \Hs^\g(\T^d)$ is said to be a \emph{$p$-solution to \eqref{eq:hyper_NS_linear} on $(s,\tau)$} provided $v\in L^p(s,\tau;\Hs^{\g}(\T^d))$ a.s.\ and, 
a.s.\ for all $t\in (s,\tau)$,
$$
v(t)=v_s+\int_{s}^t \big[-(-\Delta)^\g v+ (w\cdot\nabla)v +\LLxi_{\xi} v \big]\,\dd r 
+ \int_s^t   \one_{[s,\tau]}\big(\p [ (\xi_{n}\cdot\nabla) v]+ g_n\big)_{n\geq 1}\,\dd \mathcal{W}_{\ell^2}.
$$
Here $\mathcal{W}_{\ell^2}$ is the $\ell^2$-cylindrical Brownian motion induced by $(W^n)_{n\geq 1}$, see \eqref{eq:cylindrical}.

\smallskip

Maximal $L^p$-regularity estimates for SPDEs were first studied by Krylov (see e.g., \cite{Kry94a,Kry}) and later partially extended via the semigroup approach by van Neerven, Veraar, and Weis \cite{MaximalLpregularity}. 
The reader is referred to \cite[Subsection 1.2]{AV19_QSEE1} and \cite[Section 3]{AV25_survey} for brief overviews of known results.

Here, we prove maximal $L^p$-regularity estimates for the stochastic HSEs \eqref{eq:hyper_NS_linear} by combining the results in \cite{MaximalLpregularity} and the perturbation result in \cite[Section 3]{AV_torus}.

\begin{theorem}[Maximal $L^p$-regularity -- HSEs with $L^\infty$-transport noise]
\label{t:smr_theta}
Assume that $\g>1$ and $p\in [2,\infty)$. Let $(\xi_n)_{n\geq 1}$ be as above and let \eqref{eq:boundedness_xi} be satisfied for some deterministic constant $N\geq 1$.
Suppose that $\nabla \cdot \xi_n=0$ in $\D'(\T^d)$ for all $n\geq 1$, and $w\in \R^d$ satisfies $|w|\leq N$.
Fix $ s\geq 0$, $t\in (s,\infty]$, $v_s\in L^p_{\F_s}(\O;\Bs^{\g(1-2/p)}_{2,p}(\T^d))$, a stopping time $\tau:\O\to [s,t]$, and progressively measurable processes
\begin{equation*}
f\in L^p( (s,\tau)\times \O;\Hs^{-\g}(\T^d)),
\quad \  g=(g_{n})_{n\geq 1}\in L^p( (s,\tau)\times \O;\Ls^{2}(\T^d;\ell^2)).
\end{equation*}  
Then there exists a unique global $p$-solution $v$ to \eqref{eq:hyper_NS_linear} on $(s,\tau)$ for which the following hold:
\begin{enumerate}[{\rm(1)}]
\item\label{it:smr_theta_1} If $t<\infty$, then there exists a constant $C_1(\g,N,p,t-s)>0$ such that
\begin{align*}
&\E\Big[ \sup_{r\in [s,\tau]}\|v(r)\|_{B^{\g(1-2/p)}_{2,p}(\T^d;\R^d)}^p\Big]
+\E\int_s^\tau \|v(r)\|_{H^{\g}(\T^d;\R^d)}^p \,\dd r\\
&\quad
\leq C_1 \E\|v_s\|_{B^{\g(1-2/p)}_{2,p}(\T^d;\R^d)}^p
+ C_1  \E\int_s^\tau \big(\|f(r)\|_{H^{-\g}(\T^d;\R^d)}^p + \|g(r)\|_{L^{2}(\T^d;\ell^2)}^p\big) \,\dd r.
\end{align*}
\item\label{it:smr_theta_2} 
If 
$
\int_{\T^d} v_s\,\dd x =0 $ a.s.\ and 
$ 
\int_{\T^d} f\,\dd x 
= 
\int_{\T^d} g_n\,\dd x=0
$
a.e.\ on $(s,\tau) \times \O$ for all $n\geq 1$ and $v\in L^p( (s,\tau)\times \O;L^2(\T^d;\R^d))$, then there exists $C_2(\g,N,p)>0$ such that
\begin{align*}
&\E \Big[\sup_{r\in [s,\tau]}\|v(r)\|_{B^{\g(1-2/p)}_{2,p}(\T^d;\R^d)}^p\Big]
+
\E\int_s^{\tau} \|v(r)\|_{H^{\g}(\T^d;\R^d)}^p \,\dd t\\
&\qquad\qquad
\leq C_2 \E\int_s^\tau \big(\|f(r)\|_{H^{-\g}(\T^d;\R^d)}^p + \|g(r)\|_{L^{2}(\T^d;\ell^2)}^p\big) \,\dd r\\
&\qquad\qquad
+C_2 \E\|v_s\|_{B^{\g(1-2/p)}_{2,p}(\T^d;\R^d)}^p+C_2\E\int_s^\tau \|v(r)\|_{L^2(\T^d;\R^d)}^p \,\dd r .
\end{align*}
\end{enumerate}
\end{theorem}

As the proof below shows, the above also holds with $L^q$-integrability in $x\in \T^d$, and with additional optimal space-time regularity estimates in the spaces 
$$
L^p(\O;H^{\vartheta,p}(0,\tau;H^{\g(1-2\vartheta)}(\T^d;\R^d))) \ \text{ for } \  \vartheta\in [0,1/2),
$$ 
cf.\ \cite[Definition 3.5]{AV19_QSEE1} or \cite[Definition 3.8]{AV25_survey}. However, the latter are not needed in this manuscript. Moreover, we emphasize that the constants $C_1$ and $C_2$ are independent of $s\geq 0$.

By comparing \eqref{it:smr_theta_1} with \eqref{it:smr_theta_2}, we observe that in the latter, $t = \tau = \infty$ is allowed, but at the cost of introducing a term that depends on the $L^2$-norm of $v$, which is lower-order with respect to the estimated $H^\g$-norm.
As shown in Section \ref{s:scaling_limit}, the key aspect of Theorem \ref{t:smr_theta} is that the constants $C_1$ and $C_2$ depend on $(\xi_n)_{n \geq 1}$ only through $N \geq 1$, which controls its $L^\infty(\R_+ \times \O \times \T^d; \ell^2)$-norm. Importantly, no smoothness assumption is required for $(\xi_n)_{n \geq 1}$, as discussed in Subsection \ref{ss:novelty}.

\begin{proof}
For brevity, below we only prove \eqref{it:smr_theta_2}.
The proof of \eqref{it:smr_theta_1} is similar and simpler.
In particular, we assume that $v_s,f$ and $g_n$ have mean zero on $\T^d$.
The main idea in proving the estimate in \eqref{it:smr_theta_2} is to look at \eqref{eq:hyper_NS_linear} as a perturbation of the case $\xi_n\equiv 0$ as the transport noise is lower order compared to $(-\Delta)^{\g}$ as $\g>1$, cf.\ \cite[Theorem 3.2]{AV_torus}. 
To this end, let us first discuss the case where $\xi_n\equiv 0$. For exposition convenience, we also assume $w=0$ in Step 1. However, the latter is not essential.
Below, we write $L^2$ instead of $L^2(\T^d;\R^d)$ etc.\ if no confusion seems likely.

\smallskip

\emph{Step 1: If $\xi_n\equiv 0$ and $w=0$, then for all stopping time $\tau:\O\to [s,\infty]$ and mean-zero processes $f,g_n$ and initial data $v_s$ as in the statement of Theorem \ref{t:smr_theta}, there exists a unique $p$-solution $v$ to \eqref{eq:hyper_NS_linear} on $(s,\tau)$ satisfying}
\begin{align*}
&\E \Big[\sup_{r\in [s,\tau]}\|v(r)\|_{B^{\g(1-2/p)}_{2,p}(\T^d;\R^d)}^p\Big]+
\E\int_s^\tau \|v(r)\|_{H^{\g}(\T^d;\R^d)}^p \,\dd r \\
\nonumber
&\quad
\leq C_0 \E\|v_s\|_{B^{\g(1-2/p)}_{2,p}(\T^d;\R^d)}^p+  C_0\E\int_s^{\tau} \big(\|f(r)\|_{H^{-\g}(\T^d;\R^d)}^p + \|g(r)\|_{L^{2}(\T^d;\ell^2)}^p\big) \,\dd r,
\end{align*}
\emph{where $C_0$ depends only on $p,\g$ and $d$ (in particular, $C_0$ is independent of $s\geq 0$)}.

As $\xi_n\equiv 0$, here we can apply the semigroup approach to stochastic maximal $L^p$-regularity estimates.
Due to \cite[Proposition 3.12(b)]{AV19_QSEE2}, it is enough to show the above estimate with $\tau$ replaced by $t$.
For $\sigma\in\R$, let us set 
$$
\textstyle
\Hsd^{\sigma}(\T^d)\stackrel{{\rm def}}{=}\{w\in
\Hsd^{\sigma}(\T^d) \,:\,\int_{\T^d}w(x)\, \dd x =0\},
$$
where $\int_{\T^d}w(x)\, \dd x \stackrel{{\rm def}}{=}\langle 1,w\rangle$ if $\sigma<0$. Note that the operator
\begin{equation}
\label{eq:def_Aq}
\begin{aligned}
A: 
\Hsd^{\g}(\T^d)
\subseteq 
\Hsd^{-\g}(\T^d)&\to 
\Hsd^{-\g}(\T^d),\\
v&\mapsto (-\Delta)^{\g}v,
\end{aligned}
\end{equation}
is invertible and the periodic version of \cite[Theorem 10.2.25]{Analysis2} implies that $A$ has a bounded $H^{\infty}$-calculus with angle $0$ (see \cite[Chapter 10]{Analysis2}). Hence, the claim follows from \cite[Theorem 1.2]{MaximalLpregularity}. Let us note that the independence of the constant $C_0$ on $s\geq 0$ can be seen by the deterministic characterisation of the stochastic maximal $L^p$-regularity for semigroup generator in \cite[Proposition 3.7]{AV19}.

\smallskip

\emph{Step 2: Conclusion -- Proof of \eqref{it:smr_theta_2}}.
Recall that $u_s,f$, and $g_n$ have mean zero. Thus, any $p$-solution to the HSEs \eqref{eq:hyper_NS_linear} satisfy $\int_{\T^d} v(t,x)\,\dd x =0$ a.s.\ on for all $t\in (s,\tau)$ since $\int_{\T^d} (\xi_n\cdot\nabla) v\,\dd x =0$ for all $v\in H^{\g}$ as $\nabla\cdot\xi_n=0$ in $\D'(\T^d)$ by assumption.
As mentioned at the beginning of the proof, we prove the claimed estimate by applying \cite[Theorem 3.2]{AV_torus} with the choice $X_1=\Hs^{\g}(\T^d)$, $X_0=\Hs^{-\g}(\T^d)$, $H=\ell^2(\N)$, $\kappa=0$ (no weights in time), $A$ as in \eqref{eq:def_Aq}, $B=0$, and for $v\in X_1$,
$$
A_0 v= -(w\cdot\nabla) v-
\LLxi_{\xi} (v), \qquad B_0=
(\p[(\xi_n\cdot\nabla)v])_{n\geq 1}. 
$$ 
We claim that, for all $\varepsilon>0$ there exists $C_\varepsilon>0$ depending only on $\g,N,d$ and $\varepsilon$ such that, for all $v\in X_1$,
\begin{equation}
\label{eq:claimed_interpolation_inequality}
\|A_0v\|_{X_0} +\|B_0 v\|_{L^2(\T^d;\ell^2)}\leq \varepsilon\|v\|_{X_1}+ C_\varepsilon \|v\|_{L^2}.
\end{equation}
To see the correspondence of \eqref{eq:claimed_interpolation_inequality} with the conditions \cite[Theorem 3.2]{AV_torus}, let us recall that $X_{1/2}=[X_0,X_1]_{1/2}=L^{2}(\T^d)$ and therefore $\calL_2(\ell^2,X_{1/2})=L^2(\T^d;\ell^2)$ by Fubini's theorem.

Let us first show how \eqref{eq:claimed_interpolation_inequality} yields \eqref{it:smr_theta_2}. Let $\tau:\O\to [s,\infty]$ be a stopping time.
The existence of a unique $p$-solution on $(s,\tau)$ 
to \eqref{eq:hyper_NS} such that
\begin{equation}
\label{eq:smr_regularity_appendix}
v\in L^p(\O;L^p ((s,\tau\wedge t);\Hs^{\g}(\T^d))\cap C([s,\tau\wedge t];\Bs^{\g(1-2/p)}_{2,p}(\T^d)))
\end{equation} 
for all $0\leq s<t<\infty$,
follows from \cite[Theorem 3.2]{AV_torus}, Step 1 and \eqref{eq:claimed_interpolation_inequality} with $\varepsilon$ sufficiently small depending only on $C_0$ in Step 1. 
As the existence and uniqueness are proven, the estimate in item \eqref{it:smr_theta_2} follows by observing that Step 1 and \eqref{eq:claimed_interpolation_inequality} imply, for all $t\in (s,\infty)$,
\begin{align*}
&\E\Big[ \sup_{r\in [s,\tau\wedge t]}\|v(r)\|_{B^{\g(1-2/p)}_{2,p}}^p\Big]+
\E\int_s^{\tau\wedge t} \|v(r)\|_{H^{\g}}^p \,\dd r \\
&
\leq C_0 \E\|v_s\|_{B^{\g(1-2/p)}_{2,p}}^p+  C_0\E\int_s^{\tau\wedge t} \big(\|f(r)-A_0 v(r)\|_{H^{-\g}}^p 
+ \|g(r)+B_0v(r)\|_{L^{2}(\ell^2)}^p\big) \,\dd r\\
&
\stackrel{(i)}{\leq} C_1 \E\|v_s\|_{B^{\g(1-2/p)}_{2,p}}^p+  C_1\E\int_s^{\tau\wedge t}\big(\|f(r)\|_{H^{-\g}}^p 
+ \|g(r)\|_{L^{2}(\ell^2)}^p\big) \,\dd r\\
&+ C_1 \E\int_s^{\tau\wedge t} \|v(r)\|_{L^2}^p \,\dd r + \frac{1}{2}\E\int_s^{\tau\wedge t} \|v(r)\|_{H^{\g}}^p \,\dd r  ,
\end{align*} 
where $C_1(\g,N,d,C_0)>0$ and in $(i)$ we applied \eqref{eq:claimed_interpolation_inequality} with $\varepsilon$ depending only on $C_0$. Due to \eqref{eq:smr_regularity_appendix}, the last term on the RHS of the above can be absorbed on the LHS of the corresponding bound. Hence, the claimed estimate in \eqref{it:smr_theta_2} follows by letting $t\uparrow \infty$ and Fatou's lemma. 

It remains to prove \eqref{eq:claimed_interpolation_inequality}.
By \eqref{eq:boundedness_xi} and standard interpolation inequalities, one has, for all $v\in H^{\g}$ and $\varepsilon>0$,
\begin{align}
\label{eq:smallness_Pxi}
\|\LLxi_{\xi} (v)\|_{H^{-\g}}
\leq C_{\g,d} \|\LLxi_{\xi} (v)\|_{H^{-1}}&\leq C_{\g,d} N^2 \|v\|_{H^{1}} \\
\nonumber
& \leq \varepsilon \|v\|_{H^{\g}}+ C_{\g,d,N,\varepsilon} \|v\|_{L^2}.
\end{align}
Similarly, since $\g>1$, we have $
\|(w\cdot\nabla) v\|_{H^{-\g}}
\lesssim_{\g,d}N \|v\|_{L^2}$ and
\begin{align}
\label{eq:smallness_Pxi2}
\|((\xi_n\cdot\nabla)v)_{n\geq 1}\|_{L^2(\ell^2)}
\lesssim N \|v\|_{H^{1}}\leq 
 \varepsilon \|v\|_{H^{\g}}+  C_{\g,d,N,\varepsilon}\|v\|_{L^2}.
\end{align}
The boundedness of $\p$ on $H^{\varrho}$ for $\varrho\in\R$, and \eqref{eq:smallness_Pxi}-\eqref{eq:smallness_Pxi2} yield \eqref{eq:claimed_interpolation_inequality}.
\end{proof}

\section{Local well-posedness and regularity for the stochastic HNSEs}
\label{s:local_well_stoch_problem}
In this section, we analyze the local well-posedness and regularity of stochastic HNSEs in an $L^p(L^2)$ setting. As noted in the comments below Theorem \ref{t:main_theorem_L2} (see also Remark \ref{r:generalization}), the $L^2$-setting in space is chosen for simplicity, while the $L^p$-setting in time is essential.%

For $s\geq 0$, consider
\begin{equation}
\label{eq:hyper_NS_s}
\left\{
\begin{aligned}
\partial_t u + \p[\nabla \cdot (u\otimes u)] 
&+ (-\Delta)^{\g} u  \\
& = \sum_{k\in \Z^3_0}\sum_{\alpha\in \{1,2\}} \theta_k\, \p\big[(\sigma_{k,\alpha}\cdot\nabla) u \big]\circ \dot{W}^{k,\alpha}& \text{ on }&\T^3,\\
\nabla\cdot u&=0&\text{ on }&\T^3,\\
u(s,\cdot)&=u_s&\text{ on }&\T^3.
\end{aligned}
\right.
\end{equation} 
The flexibility in the initial time $s\geq 0$ will be used in the proof of Theorem \ref{t:main_theorem_L2}. Local, unique and maximal $p$-solutions to \eqref{eq:hyper_NS_s} are defined as in Definition \ref{def:p_solution}, replacing the initial time $t=0$ by $t=s$.

\smallskip

Now, the discussion splits naturally between the case of small hyperviscosity $1<\g<\frac{5}{4}$ and the one of large hyperviscosity $\g\geq \frac{5}{4}$. We begin with the case of small hyperviscosity as it is the main focus of the present manuscript. 

\begin{theorem}[Local well-posedness --  HNSEs with transport noise and small hyperviscosity]
\label{t:local_HNSEs_hyper_small_viscosity}
Let $\theta\in \ell^2$ be radially symmetric, $1<\g<\frac{5}{4}$ and $s\geq 0$. 
Assume that $p<\infty$ satisfies
\begin{equation}
\label{eq:HNSEs_critical}
\textstyle
p\geq \frac{4\g}{6\g-5}.
\end{equation}
Then, for all $u_s\in L^0_{\F_s}(\O;\Bs^{\g(1-2/p)}_{2,p}(\T^3))$, there exists a unique local $p$-solution $(u,\tau)$ to \eqref{eq:hyper_NS_s} satisfying $\tau>s$ a.s.\ and
\begin{align}
\label{eq:regularity_paths_local_small_1}
u&\in H^{\vartheta,p}_{\loc}([s,\tau);H^{\g(1-2\vartheta)}(\T^3;\R^3))\ \text{ a.s.\ for all }\vartheta\in [0,1/2),\\
\label{eq:regularity_paths_local_small_2}
u&\in C([s,\tau);B_{2,p}^{\g(1-2/p)}(\T^3;\R^3))\  \text{ a.s.}
\end{align}
Moreover, if $\#\,\supp\theta<\infty$, then the solution $(u,\tau)$ instantaneously regularizes in time and space:
\begin{align}
\label{eq:instantaneous_reg_local}
u&\in C^{\vartheta_0,\vartheta_1}((s,\tau)\times \T^3;\R^3)\ \text{ a.s.\ for all }\vartheta_0\in [0,1/2), \, \vartheta_1<\infty.
\end{align}
\end{theorem}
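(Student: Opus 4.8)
The plan is to obtain Theorem \ref{t:local_HNSEs_hyper_small_viscosity} as an application of the abstract theory of stochastic evolution equations in critical spaces developed in \cite{AV19_QSEE1,AV19_QSEE2}, combined with the stochastic maximal $L^p$-regularity estimates for the hyperviscous turbulent Stokes system (Theorem \ref{t:smr_theta} of Appendix \ref{app:smr}) and the nonlinear estimate of Lemma \ref{lem:nonlinearity_estimate}. First I would set up the abstract framework: take the ground space $X_0=\Hs^{-\g}(\T^3)$ and the regularity space $X_1=\Hs^{\g}(\T^3)$, so that the interpolation spaces $X_{1-1/p}=\Bs^{\g(1-2/p)}_{2,p}(\T^3)$ carry the initial data, and write \eqref{eq:hyper_NS_s} (after the It\^o--Stratonovich correction \eqref{eq:Ito_stratonovich_change}) in the abstract form $\dd u + A u\,\dd t = F(u)\,\dd t + (B u + \text{l.o.t.})\,\dd \mathcal{W}_{\ell^2}$, with $A=(-\Delta)^\g - \LL$ (shifted if necessary to be invertible on the mean-zero part), $F(u)=\p[\nabla\cdot(u\otimes u)]$ the bilinear term, and $B$ the transport noise operator $u\mapsto (\theta_k\,\p[(\sigma_{k,\alpha}\cdot\nabla)u])_{k,\alpha}$.

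Next I would verify the hypotheses of \cite[Theorem 4.8 or Theorem 2.1]{CriticalQuasilinear} (i.e.\ \cite{AV19_QSEE1}). The linear part: $(A,B)$ must satisfy stochastic maximal $L^p$-regularity on $(X_0,X_1)$, which is precisely the content of Theorem \ref{t:smr_theta}; here one uses the weak analytic setting so that no spatial smoothness on the noise coefficients is required, only $\theta\in\ell^2$ and the smoothness/divergence-freeness of $\sigma_{k,\alpha}$. The nonlinearity: one must check that $F$ maps $X_1$ (more precisely an interpolation space $X_\beta$ with the right index) into $X_0$ with the subcritical/critical bilinear bound; by Lemma \ref{lem:nonlinearity_estimate}, $\|F(u)\|_{\Hs^{-\g}}\lesssim \|u\|_{H^\beta}^2$ with $\beta=\tfrac54-\tfrac\g2$, and one verifies that $H^\beta$ is an admissible interpolation space between $X_0$ and $X_1$ with the critical weight exponent matching the condition $p\geq \tfrac{4\g}{6\g-5}$ — this is exactly where the constraint \eqref{eq:HNSEs_critical} (equivalently $\beta$ lies at or below the critical level $\g(1-1/p)+\g/p$-type threshold, i.e.\ $B^{5/2-2\g}_{2,p}$ being (sub)critical) enters, making $\Bs^{5/2-2\g}_{2,p}$ the critical space for the initial data. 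Applying the abstract theorem then yields a unique local $p$-solution $(u,\tau)$ with $\tau>s$ a.s.\ and the regularity \eqref{eq:regularity_paths_local_small_1}; the trace embedding $H^{\vartheta,p}\cap L^p\hookrightarrow C(\cdot;\Bs^{\g(1-2/p)}_{2,p})$ from \cite[Theorem 1.2]{ALV21} gives \eqref{eq:regularity_paths_local_small_2}.

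For the instantaneous regularization \eqref{eq:instantaneous_reg_local} under $\#\,\supp\theta<\infty$, I would run a bootstrap argument: once $\#\,\supp\theta<\infty$ the noise coefficients are smooth, so the same abstract machinery applies on the scale $(\Hs^{s-\g},\Hs^{s+\g})$ for arbitrarily large $s$, and the local solution starting from any positive time $t_0\in(s,\tau)$ (where $u(t_0)$ lies in a higher-regularity space, reached by the parabolic smoothing encoded in the maximal regularity spaces) can be shown to coincide with $u$ by uniqueness. Iterating this over a sequence $s_n\to\infty$ upgrades $u$ to $H^{\vartheta,p}_{\loc}((s,\tau);H^{s})$ for all $s$, and then Sobolev embedding in space together with the Kolmogorov/Hölder continuity criterion in time (or the standard $L^p$-in-time, arbitrary-regularity-in-space embedding into $C^{\vartheta_0,\vartheta_1}$ as recorded in Subsection \ref{ss:notation}) delivers \eqref{eq:instantaneous_reg_local}.

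The main obstacle I expect is twofold. First, pinning down precisely which interpolation space $X_\beta$ hosts the bilinear map and checking that the resulting criticality bookkeeping is consistent with $p\geq\tfrac{4\g}{6\g-5}$ — that is, matching Lemma \ref{lem:nonlinearity_estimate} against the admissible range of exponents in \cite{AV19_QSEE1} so that $\Bs^{5/2-2\g}_{2,p}$ is genuinely (sub)critical and the nonlinearity is of "critical" type; this requires carefully tracking the scaling of Sobolev indices. Second, and more delicate, is the verification of stochastic maximal $L^p$-regularity for the pair $(A,B)$ including the transport noise term $Bu$ at the same differential order as a first-order operator while $A$ is of order $2\g>2$: this is the content of Theorem \ref{t:smr_theta}, and the fact that $\g>1$ makes $B$ genuinely lower order is what makes it tractable (as the discussion in Subsection \ref{ss:towards_NS} emphasizes), but one still needs the perturbation/localization argument showing the $\ell^2$-bounded, $L^\infty$-in-space noise does not destroy maximal regularity — invoking Theorem \ref{t:smr_theta} black-boxes this, but its applicability to the weak setting with merely $\theta\in\ell^2$ is the technical heart.
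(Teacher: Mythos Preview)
Your proposal is correct and follows essentially the same route as the paper: cast \eqref{eq:hyper_NS_s} as an abstract stochastic evolution equation on $(X_0,X_1)=(\Hs^{-\g},\Hs^{\g})$, invoke Theorem \ref{t:smr_theta} for stochastic maximal $L^p$-regularity of the linear part, use Lemma \ref{lem:nonlinearity_estimate} to verify the bilinear estimate on $F$, check that the interpolation index $\varphi$ with $[H^{-\g},H^\g]_\varphi=H^\beta$ (namely $\varphi=\tfrac{5}{8\g}+\tfrac14$) satisfies the critical condition of \cite[Theorem 4.8]{AV19_QSEE1} precisely when $p\geq \tfrac{4\g}{6\g-5}$, and then appeal to the bootstrap machinery of \cite[Section 6]{AV19_QSEE2} (as in \cite[Theorems 2.4 and 2.7]{AV21_NS}) for the instantaneous regularization. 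One small citation slip: the relevant abstract local well-posedness theorem is \cite[Theorem 4.8]{AV19_QSEE1}, not \cite{CriticalQuasilinear} (the latter is the deterministic Pr\"uss--Simonett--Wilke paper).
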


If $p=\frac{4\g}{6\g-5}$, then $\g(1-2/p)= 5/2-2\g$ and therefore the space of initial data is critical (or scaling invariant) for the HNSEs, see the discussion below \eqref{eq:scaling_HNSEs}. In the remaining cases $p>\frac{4\g}{6\g-5}$, the corresponding space for the initial data is subcritical. 

Next, for completeness, we discuss the case of large hyperviscosity $\g\geq \frac{5}{4}$. Indeed, in such a case, as expected, global well-posedness in the energy space $\Ls^2(\T^3)$ holds.

\begin{theorem}[Global well-posedness -- HNSEs with transport and large hyperviscosity]
\label{t:local_HNSEs_hyper_high_viscosity}
Let $\theta\in \ell^2$ be radially symmetric and let $\g\geq \frac{5}{4}$. 
Then, for all $s\geq 0$ and $u_s\in L^0_{\F_s}(\O;\Ls^{2}(\T^3))$, there exists a unique global ($2-$)solution $u$ to \eqref{eq:hyper_NS_s} satisfying 
\begin{align}
\label{eq:regularity_paths_local_high}
u\in L^2_{\loc}([s,\infty);H^1(\T^3;\R^3))\cap C([s,\infty);L^{2}(\T^3;\R^3))\  \text{ a.s.}
\end{align}
Moreover, if $\#\,\supp\theta<\infty$, then the global solution $u$ instantaneously regularizes in time and space, i.e.\ $u$ satisfies \eqref{eq:instantaneous_reg_local} with $\tau=\infty$ a.s.\
\end{theorem}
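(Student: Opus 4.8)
\textbf{Proof plan for Theorem \ref{t:local_HNSEs_hyper_high_viscosity}.}
The plan is to treat the case $\g\geq\frac{5}{4}$ as the classical ``energy method'' situation, where the $L^2$ a priori estimate \eqref{eq:energy_estimate_unchanged} is already subcritical (or critical at $\g=\frac{5}{4}$) and hence suffices to globalize. First I would set up the abstract framework: rewrite the Stratonovich equation \eqref{eq:hyper_NS_s} in It\^o form using \eqref{eq:Ito_stratonovich_change}, obtaining the corrector $\LL u$, and cast the system as a stochastic evolution equation on the pair $(\Ls^{-\g+?},\ldots)$ — more precisely in the variational/Gelfand-triple setting $\Hs^1(\T^3)\embed\Ls^2(\T^3)\embed\Hs^{-1}(\T^3)$, since the transport term $\p[(\sigma_{k,\alpha}\cdot\nabla)u]$ maps $\Hs^1$ to $\Ls^2$ and no spatial smoothness of the (smooth) $\sigma_{k,\alpha}$ is actually needed here. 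Local well-posedness and the regularity \eqref{eq:regularity_paths_local_high} with a maximal time $\tau$, plus a blow-up criterion in terms of $\sup_{t<\tau}\|u(t)\|_{\Ls^2}$ (or an $L^p$-in-time criterion), follow from the theory of \cite{AV19_QSEE1,AV19_QSEE2} exactly as invoked for Theorem \ref{t:local_HNSEs_hyper_small_viscosity}, applied now with the critical/subcritical space $\Bs^{5/2-2\g}_{2,p}\hookleftarrow\Ls^2$ since $5/2-2\g\le0$; alternatively one may cite a classical variational-SPDE existence result (Pardoux/Krylov–Rozovskii type) since the nonlinearity and transport term satisfy the required coercivity and monotonicity-type bounds.

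The core step is the global a priori bound. I would apply the It\^o formula for $\|u(t)\|_{\Ls^2}^2$ to the It\^o-form equation: the convective term drops by the usual cancellation $\langle\p[\nabla\cdot(u\otimes u)],u\rangle=0$ for divergence-free $u$; the hyperviscosity gives $-\|(-\Delta)^{\g/2}u\|_{\Ls^2}^2$, which controls $\|u\|_{\Hs^\g}^2$ up to the mean; the It\^o correction $\LL u$ and the quadratic variation of the transport martingale cancel exactly because $\sigma_{k,\alpha}$ is divergence-free and $\theta$ is radially symmetric — this is precisely the content of \eqref{eq:energy_estimate_unchanged} — and the martingale part has zero expectation. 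This yields the deterministic pathwise identity \eqref{eq:energy_estimate_unchanged} and in particular $u\in L^\infty_{\loc}([s,\infty);\Ls^2)\cap L^2_{\loc}([s,\infty);\Hs^\g)\subseteq L^2_{\loc}([s,\infty);\Hs^1)$ a.s., which forces $\tau=\infty$ via the blow-up criterion. One subtlety: to run the energy method one must verify the convective term is controlled, i.e.\ that $\nabla\cdot(u\otimes u)$ is an admissible right-hand side; here I would use that $\g\ge\frac54$ makes $\Ls^2$ (sub)critical — concretely $\|\nabla\cdot(u\otimes u)\|_{\Hs^{-\g}}\lesssim\|u\|_{\Ls^{2}}^{2(1-\theta)}\|u\|_{\Hs^\g}^{2\theta}$ with $\theta<1$ by Lemma \ref{lem:nonlinearity_estimate} and interpolation (for $\g\ge\frac54$ one checks $\beta=\frac54-\frac\g2\le\frac\g2$, so the nonlinearity is below the $\Hs^{\g/2}$ level), so the cubic term in the energy estimate can be absorbed by Young's inequality into the dissipation.

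For the instantaneous regularization \eqref{eq:instantaneous_reg_local} when $\#\,\supp\theta<\infty$, I would bootstrap exactly as in Theorem \ref{t:local_HNSEs_hyper_small_viscosity}: with finitely supported $\theta$ the noise coefficients are smooth, so the SPDE has smoothing in the full scale of Sobolev spaces; starting from $u\in L^2_{\loc}(\Hs^\g)\cap C(\Ls^2)$ a.s.\ one uses stochastic maximal $L^p$-regularity (Appendix \ref{app:smr}) on each interval $(s+\varepsilon,T)$ to gain half a derivative at a time, iterating to reach arbitrary spatial regularity, and then the mixed time-space H\"older bound \eqref{eq:instantaneous_reg_local} follows from the trace/embedding \eqref{eq:smr_p_intro}–\eqref{eq:convergence_CB} together with Sobolev embedding and Kolmogorov's continuity theorem in time. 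I expect the main obstacle — though a mild one compared with the small-viscosity case — to be making the energy/It\^o computation rigorous at the level of the weak solution (justifying the It\^o formula for $\|\cdot\|_{\Ls^2}^2$ with the $\Hs^{-\g}$-valued drift and the subtle exact cancellation of the transport It\^o-correction against the corrector $\LL u$ under only radial symmetry of $\theta$), which I would handle either by the generalized It\^o formula of \cite{AV19_QSEE1,AV19_QSEE2} in the appropriate duality setting or by a Galerkin/mollification approximation passing to the limit.
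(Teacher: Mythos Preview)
Your proposal is correct and aligned with the paper's approach: the paper simply invokes the critical variational framework \cite[Theorem 3.4]{AV24_variational}, which packages exactly the energy-method/cancellation argument $\int_{\T^3}\p[\nabla\cdot(v\otimes v)]\cdot v\,\dd x=0$ that you describe, and then cites \cite[Theorems 2.4 and 2.7]{AV21_NS} for the instantaneous regularization bootstrap. One minor correction: the Gelfand triple must be $\Hs^\g\embed\Ls^2\embed\Hs^{-\g}$ (not $\Hs^1\embed\Ls^2\embed\Hs^{-1}$), since $(-\Delta)^\g$ does not act $\Hs^1\to\Hs^{-1}$ for $\g>1$ --- your later use of $L^2_{\loc}(\Hs^\g)$ shows you already have the right space in mind.
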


Although the previous result is not surprising, to the best of the author's knowledge, the global well-posedness of the stochastic HNSEs with high hyperviscosity was not present in the literature.

\smallskip

Finally, we discuss the global well-posedness of the HNSEs with transport noise with small data. 
The following will be used in the proof of Theorem \ref{t:main_theorem_L2}.

\begin{theorem}[Global well-posedness --  HNSEs with transport noise and small data]
\label{t:small_implies_global}
Let $1<\g<\frac{5}{4}$, $N_0\geq 1$ and $p=\frac{4\g}{6\g-5}$. Assume that 
$\theta\in \ell^2$ is radially symmetric and satisfies $\|\theta\|_{\ell^2}\leq N_0$. 
Then, for all $\varepsilon_0>0$, there exists $\delta_0=\delta_0(\g,N_0,\varepsilon_0)>0$ for which the following assertion holds:

For all $s\geq 0$ and $u_s\in L^p_{\F_s}(\O;\Bs^{5/2-2\g}_{2,p}(\T^3))$ with $\displaystyle{\Big|\int_{\T^3}u_s(x)\,\dd x\Big|\leq N_0}$ a.s.,
\begin{equation}
\label{eq:small_implies_global_statement}
\E\Big[\Big\|u_s-\int_{\T^3}u_s(x)\,\dd x\Big\|_{B^{5/2-2\g}_{2,p}(\T^3;\R^3)}^p\Big]\leq \delta_0
\ \quad \Longrightarrow \quad  
\P(\tau=\infty)>1-\varepsilon_0
\end{equation} 
where $\tau$ is the explosion time of the unique local $p$-solution $(u,\tau)$ to \eqref{eq:hyper_NS_s} provided by Theorem \ref{t:local_HNSEs_hyper_small_viscosity}. 
\end{theorem}

The proof of the above result is given in Subsection \ref{ss:global_small_hyper} below.
Although not clear at this stage, the fact that $\delta_0$ depends on $\theta$ only through its $\ell^2$-norm is the key point in using Theorem \ref{t:small_implies_global} to prove \eqref{eq:global_high_probability} in Theorem \ref{t:main_theorem_L2}. Finally, let us point out that the proof of the above result also yields the following stronger version of \eqref{eq:small_implies_global_statement}: If $\displaystyle{
\E\Big[\Big\|u_s-\int_{\T^3}u_s(x)\,\dd x\Big\|_{B^{5/2-2\g}_{2,p}}^p\Big]\leq \delta_0}$, then there exists a stopping time $\tau_0\leq \tau$ such that $\P(\tau_0=\infty)>1-\varepsilon_0$ and 
\begin{equation}
\label{eq:estimate_small_implies_global}
\E\Big[\sup_{t\in [0,\tau_0)}\Big\|u(t,\cdot)-\int_{\T^3}u_s(x)\,\dd x\Big\|_{B^{5/2-2\g}_{2,p}}^p\Big]\leq K_0
\E\Big[\Big\|u_s-\int_{\T^3}u_s(x)\,\dd x\Big\|_{B^{5/2-2\g}_{2,p}}^p \Big],
\end{equation} 
where $K_0$ depends only on $\g$ and $N_0$, and $p=\frac{4\g}{6\g-5}$ is as in Theorem \ref{t:small_implies_global}.
The proof of \eqref{eq:estimate_small_implies_global} is given at the end of Subsection \ref{ss:global_small_hyper}.

\subsection{Proof of Theorems \ref{t:local_HNSEs_hyper_small_viscosity} and \ref{t:local_HNSEs_hyper_high_viscosity}}
\label{ss:proofs_local}
The proofs of Theorems \ref{t:local_HNSEs_hyper_small_viscosity} and \ref{t:local_HNSEs_hyper_high_viscosity} are rather standard, and we content ourselves with giving a sketch. To prove Theorem \ref{t:local_HNSEs_hyper_small_viscosity}, we employ the theory of stochastic evolution equations in critical spaces developed in \cite{AV19_QSEE1,AV19_QSEE2}. To begin, let us reformulate the noise by using only real-valued noise coefficients and the independent real-valued Brownian motions $(B_{k,\alpha})_{k,\alpha}$ in \eqref{eq:complex_BM}. One can readily check that, for each real-valued vector field $u\in H^{\g}(\T^3;\R^3)$, 
$$
\sum_{k,\alpha} \p[(\sigma_{k,\alpha}\cdot\nabla) u]\, \dot{W}^{k,\alpha}
= \sum_{k,\alpha} \p [(\xi_{k,\alpha}\cdot \nabla) u]\, \dot{B}^{k,\alpha}
$$
where, for $x\in \T^3$, $\alpha\in \{1,2\}$ and $k\in \Z^3_0$ (see Subsection \ref{ss:noise}), 
$$
\xi_{k,\alpha}(x)\stackrel{{\rm def}}{=}
\left\{
\begin{aligned}
2\Re \sigma_{k,\alpha}(x)=2a_{k,\alpha}\cos(2\pi\,k\cdot x)\quad \text{ if } k\in \Z^3_+,\\
2\Im \sigma_{k,\alpha}(x)=2a_{k,\alpha}\sin(2\pi\,k\cdot x)\quad \text{ if } k\in \Z^3_-.
\end{aligned}
\right.
$$
Correspondingly, for each real-valued vector field $u\in H^\g(\T^3;\R^3)$,  
$$
\LL u= \frac{3\mu}{4}
\sum_{k,\alpha}\theta_k^2 \,\p \big[ \nabla \cdot ( \p[(\xi_{k,\alpha}\cdot \nabla)u]\otimes \xi_{k,\alpha}\big)\big].
$$
Hence, \eqref{eq:hyper_NS_s} can be rewritten as a stochastic evolution equation on the ground and state spaces $X_0=\Hs^{-\g}(\T^3)$ of the form:
\begin{equation}
\label{eq:SEE}
\left\{
\begin{aligned}
&\dd u+ A u\,\dd t =F(u)\,\dd t+ B u \,\dd \mathcal{B}_{\ell^2}, \\
&u(0)=u_0;
\end{aligned}
\right.
\end{equation}
where, for $u$ belonging to the state space $X_1= \Hs^{\g}(\T^3)$,
\begin{equation}
\begin{aligned}
\label{eq:choice_ABF}
&Au= (-\Delta)^{\g}- \LL u , \quad\quad F(u)=-\p [\nabla \cdot(u\otimes u)], \\
&\quad \quad Bu=\big(\textstyle{\sqrt{\frac{3\mu}{2}}}\,\theta_k\p[(\xi_{k,\alpha}\cdot\nabla)u]_{k,\alpha}\big)_{k,\alpha},
\end{aligned}
\end{equation}
and $\mathcal{B}_{\ell^2}$ denotes the $\ell^2$-cylindrical Brownian motion induced by $(B_{k,\alpha})_{k,\alpha}$, cf.\ \eqref{eq:cylindrical}.

To apply the results in \cite{AV19_QSEE1,AV19_QSEE2} (see also \cite{AV25_survey}), we employ the following estimates for the convective nonlinearity in the weak analytic setting for \eqref{eq:hyper_NS_s}.

\begin{lemma}[Nonlinear estimate]
\label{lem:nonlinearity_estimate}
For all $\g>1$, there exists $C_\g>0$ such that, for all $u,u'\in H^{\g}(\T^3;\R^3)$,
\begin{equation*}
\|\nabla \cdot (u\otimes u')\|_{H^{-\g}(\T^3;\R^3)}
\leq C_\g\|u\|_{H^{\beta}(\T^3;\R^3)}\|u'\|_{H^{\beta}(\T^3;\R^3)}
\end{equation*}
where $\beta=\frac{5}{4}-\frac{\g}{2}$ if $\g<\frac{5}{2}$, and $\beta>0$ arbitrary otherwise.
\end{lemma}

\begin{proof}
Let us first consider the case $\g<\frac{5}{2}$.
The Sobolev embedding $H^{-1,q}(\T^3)\embed H^{-\g}(\T^3)$ with $q=\frac{6}{1+2\g}$ implies (note that the latter holds as $q>1$)
\begin{align*}
\|\nabla\cdot (u\otimes u')\|_{H^{-\g}(\T^3;\R^3)}
\lesssim\|\nabla\cdot (u\otimes u')\|_{H^{-1,q}(\T^3;\R^3)}
\lesssim\|u\|_{L^{2q}(\T^3;\R^3)}\|u'\|_{L^{2q}(\T^3;\R^3)}
\end{align*}
for all $u,u'\in H^{\g}(\T^3;\R^3)$. In the case $\g<\frac{5}{2}$, the claimed estimate follows from the Sobolev embedding $H^{\beta}(\T^3)\embed L^{2q}(\T^3)$ as $\beta-\frac{3}{2}=-\frac{3}{2q}$.

If $\g\geq \frac{5}{2}$, then $H^{-1,q}(\T^3)\embed H^{-\g}(\T^d)$ for all $q\in (1,\infty)$, and reasoning as above the claim follows as $H^{\varepsilon}(\T^3)\embed L^{2q}(\T^3)$ where $\varepsilon = \frac{3}{2}(1-\frac{1}{q})$. The latter and the arbitrariness of $q\in (1,\infty)$ yields the claimed estimate also in the case $\g\geq \frac{5}{2}$.
\end{proof}

\begin{proof}[Proof of Theorem \ref{t:local_HNSEs_hyper_small_viscosity} -- Sketch]
From the above construction, it is clear that unique maximal $p$-solutions to \eqref{eq:hyper_NS} are equivalent to maximal $L^p_0$-local solutions in \cite[Definition 4.4]{AV19_QSEE1} due to \cite[Remark 4.6]{AV19_QSEE2} and Theorem \ref{t:smr_theta}\eqref{it:smr_theta_1}. 
Note that the nonlinear estimate of Lemma \ref{lem:nonlinearity_estimate} implies
$$
\|F(u)-F(u')\|_{H^{-\g}}\lesssim (1+\|u\|_{H^{\beta}}+ \|u'\|_{H^\beta})\|u-u'\|_{H^\beta}
$$
where $\beta=\frac{5}{4}-\frac{\g}{2}$. Since $[H^{-\g},H^{\g}]_{\varphi}=H^{\beta}$ with $\varphi= \frac{5}{8\g}+\frac{1}{4}$, the critical condition \cite[eq.\ (4.2)]{AV19_QSEE1} or \cite[eq.\ (4.3)]{AV25_survey} is equivalent to 
$$
2\varphi\leq 2 -\tfrac{1}{p} \quad \Longleftrightarrow \quad p\geq \tfrac{4\g}{6\g-5}
$$
as assumed in \eqref{eq:HNSEs_critical}. We emphasize that to ensure the condition $\varphi>1-\frac{1}{p}$ used in \cite{AV19_QSEE1,AV25_survey}, if necessary, one can replace the above parameter $\varphi$ by $\varphi \vee (1-\frac{1}{p}+\varepsilon)$ with $\varepsilon>0$ small.
The existence of unique maximal $p$-solution to \eqref{eq:hyper_NS} now follows from Theorem \ref{t:smr_theta}\eqref{it:smr_theta_1} and either \cite[Theorem 4.8]{AV19_QSEE1} or \cite[Theorem 4.7]{AV25_survey}. The instantaneous regularization follows almost verbatim from the corresponding statements in  \cite[Theorem 2.4 and 2.7]{AV21_NS} whose proofs rely on the results in \cite[Section 6]{AV19_QSEE2} (see also \cite[Subsection 5.3]{AV25_survey} for related results).
\end{proof}

\begin{proof}[Proof of Theorem \ref{t:local_HNSEs_hyper_high_viscosity} -- Sketch]
The existence of global unique solutions with paths in \eqref{eq:regularity_paths_local_high} can be obtained via Lemma \ref{lem:nonlinearity_estimate} and the `critical' variational setting \cite[Theorem 3.4]{AV24_variational}, where the assumption \cite[eq.\ (3.8)]{AV24_variational} follows from the standard cancellation 
$$\textstyle\int_{\T^3}\p [\nabla \cdot(v\otimes v)]\cdot v \,\dd x =0
$$ for   
$v\in \Hs^\g(\T^3)$.
The instantaneous regularization result assertion of Theorem \ref{t:local_HNSEs_hyper_high_viscosity} follows similarly to the one of \eqref{eq:instantaneous_reg_local} by following almost verbatim the proofs of the corresponding results for the NSEs in \cite[Theorems 2.4 and 2.7]{AV21_NS}. 
\end{proof}

\subsection{Proof of Theorem \ref{t:small_implies_global}}
\label{ss:global_small_hyper}
Here we prove Theorem \ref{t:small_implies_global}. Let us begin by collecting some useful facts. For notational convenience, in the proof of Theorem \ref{t:small_implies_global}, we let $s=0$. In the general case, the independence of the constant on $s\geq 0$ will be clear from the proof below. 
Note that the stochastic HNSEs \eqref{eq:hyper_NS_s} preserves the mean of the initial data, i.e, $\overline{u(t,\cdot)}=\overline{u_0}$ for all $t\in [0,\tau)$ a.s., where $\overline{\cdot}\stackrel{{\rm def}}{=}\int_{\T^3}\cdot\,\dd x$. 

Note that the mean-zero process $v=u-\overline{u_0}$ that is a unique $p$-solution to (as above, Definition \ref{def:p_solution} readily extend to the system below)
\begin{equation}
\label{eq:hyper_NS_s_v}
\left\{
\begin{aligned}
\partial_t v +  (\overline{u_0}\cdot \nabla) v
&+ \p[\nabla \cdot (v\otimes v)] 
+ (-\Delta)^{\g} v  \\
& = \sum_{k\in \Z^3_0}\sum_{\alpha\in \{1,2\}} \theta_k\, \p\big[(\sigma_{k,\alpha}\cdot\nabla) v \big]\circ \dot{W}^{k,\alpha}& \text{ on }&\T^3,\\
\nabla\cdot v&=0&\text{ on }&\T^3,\\
v(0,\cdot)&=v_0\stackrel{{\rm def}}{=} u_0 -\overline{u_0}&\text{ on }&\T^3.
\end{aligned}
\right.
\end{equation}
By a stopped version of the It\^o's formula for the functional $v\mapsto \|v\|_{L^2}^2$ (see e.g., \cite[Theorem 4.2.5]{LR15}) and $\nabla \cdot\sigma_{k,\alpha}=0$, the energy equality holds: 
\begin{equation}
\label{eq:energy_balance_v_revision}
\frac{1}{2}\,\|v(t)\|_{L^2}^2 + \int_0^t  \|(-\Delta)^{\g/2} v\|_{L^2}^2\,\dd r =\frac{1}{2}\, \|v_0\|_{L^2}^2 \ \text{ a.s.\ for all }t> 0.
\end{equation}
Let us emphasize that the above is obtained via the It\^o's formulation of \eqref{eq:hyper_NS_s_v} (cf.\ Definition \ref{def:p_solution}), and exploting that It\^o-Stratonovich corrector satisfies 
$$
\langle v,\LL v \rangle = -\frac{3\mu}{2}\sum_{k,\alpha}\theta_k^2 \int_{\T^3} \big|\p[(\sigma_{k,\alpha}\cdot \nabla) v ]\big|^2\,\dd x\ \ \text{ for all } v\in \Hs^{\g},
$$ 
where $\langle \cdot ,\cdot \rangle$ denotes the pairing in the duality $\Hs^{-\g}=(\Hs^\g)^*$. 
In particular, the latter contribution compensates the It\^o correction due to the transport noise.

From \eqref{eq:energy_balance_v_revision}, we deduce that $t\mapsto\|v(t)\|_{L^2}^2$ is absolutely continuous a.s. Therefore, as $\overline{v}=0$, by the Gr\"onwall and Poincar\'e inequalities it follows that
\begin{equation}
\label{eq:exponential_decay_L2_stochastic}
\|v(t)\|_{L^2}^2\leq e^{-c_0t} \|v_0\|_{L^2}^2 \ \text{ a.s.\ for all }t\geq 0,
\end{equation}
for some $c_0>0$ depending only on 
$\g$. We are now ready to prove Theorem \ref{t:small_implies_global}. 

\begin{proof}[Proof of Theorem \ref{t:small_implies_global}]
We split the proofs into three steps. Recall that  $p= \frac{4\g}{6\g-5}$, $N\geq 1$ is fixed and $\tau$ is the explosion time of the local $p$-solution to \eqref{eq:hyper_NS_s}, cf.\ Theorem \ref{t:local_HNSEs_hyper_small_viscosity}. Moreover, as in Lemma \ref{lem:nonlinearity_estimate}, we let $\beta=\frac{5}{4}-\frac{\g}{2}$.

\smallskip

\emph{Step 1: There exists $R>0$ depending only on $\g$ and $N_0\geq 1$ such that, for all $u_0\in L^p_{\F_0}(\O;B^{5/2-2\g}_{2,p})$ satisfying $|\overline{u_0}|\leq N_0$ a.s.\ and all stopping times $0\leq \lambda\leq \infty$ satisfying $\|v\|_{L^{2p}(0,\lambda;H^{\beta})}\leq M$ a.s.\ for some deterministic $M\geq 1$, we have}
\begin{equation}
\label{eq:estimate_psiR}
\E \big[\psi_R(\|v\|_{L^{2p}(0,\lambda;H^{\beta})}^p)\big]
\leq \E\|v_0\|_{B^{5/2-2\g}_{2,p}}^p,
\end{equation}
\emph{where $\psi_R(x)=R^{-1}x-x^2$ for $x\geq 0$.}

By standard interpolation theory we have $\|u\|_{H^{\beta}}\lesssim \|u\|_{B^{5/2-2\g}_{2,p}}^{1/2}\|u\|_{H^{\g}}^{1/2}$ and therefore
\begin{equation}
\label{eq:embedding_paths_smr_global}
L^{\infty}(0,t;B^{5/2-2\g}_{2,p})\cap L^p(0,t;H^\g)\embed
L^{2p}(0,t;H^{\beta}) \ \text{ for all }t>0,
\end{equation}
and the constant in the embedding can be made uniform in $t>0$.

Combining the latter, Theorem \ref{t:smr_theta}\eqref{it:smr_theta_2} with $w=-\overline{u_0}$ and Lemma \ref{lem:nonlinearity_estimate}, 
\begin{align*}
\E [\|v\|_{L^{2p}(0,\lambda;H^{\beta})}^p]
&\leq R_0\big( \E[\|v_0\|_{B^{5/2-2\g}_{2,p}}^p] + \E[ \|v\|_{L^{2p}(0,\lambda;H^{\beta})}^{2p}]
+ \E[ \|v\|_{L^{p}(0,\lambda;L^2)}^p ]\big) \\
&\stackrel{\eqref{eq:exponential_decay_L2_stochastic}}{\leq} 
R_0\big( R_1\E[\|v_0\|_{B^{5/2-2\g}_{2,p}}^p] + \E [\|v\|_{L^{2p}(0,\lambda;H^{\beta})}^{2p}] \big) 
\end{align*}
where $R_0$ and $R_1$ depend only on $\g$ and $N_0$.
This proves the claim of Step 1. 

\smallskip

\emph{Step 2: For all $\varepsilon_0\in (0,1)$, we have}
$$
\E[\|v_0\|_{B^{5/2-2\g}_{2,p}}^p]\leq \frac{\varepsilon_0}{8R^2}\qquad \Longrightarrow \qquad
\P\Big(\|v\|_{L^{2p}(0,\tau;H^{\beta})}\leq \frac{1}{(2R)^{1/p}}\Big)>1-\varepsilon_0.
$$

Let us assume that $\E[\|v_0\|_{B^{5/2-2\g}_{2,p}}^p]\leq \frac{\varepsilon_0}{8R^2}$. By contradiction, suppose that 
\begin{equation}
\label{eq:contradiction_small_global}
\P(\Dom)\leq 1-\varepsilon_0 \quad \text{ where }\quad
\Dom\stackrel{{\rm def}}{=} \Big\{\|v\|_{L^{2p}(0,\tau;H^{\beta})}\leq \frac{1}{(2R)^{1/p}}\Big\}.
\end{equation}
Let $\psi_R$ be as in Step 1. Clearly, $\psi_R$ has a global maximum given by $1/(4R^2)$, and it is attained at $1/(2R)$.
Note that $u\in L^{2p}_{\loc}([0,\tau);H^\beta)$ a.s.\ by \eqref{eq:regularity_paths_local_small_1}-\eqref{eq:regularity_paths_local_small_2} and \eqref{eq:embedding_paths_smr_global} and a.s.\ the mapping $[0,\tau)\ni t \mapsto \|v\|_{L^{2p}(0,t;H^{\beta})}=\|u-\overline{u_0}\|_{L^{2p}(0,t;H^{\beta})}$ is continuous a.s.\ and starts at $0$.
Thus, for a.a.\ $\om\in\O\setminus \Dom$ there exists $t<\tau(\om)$ such that $\|v(\cdot,\om)\|_{L^{2p}(0,t;H^{\beta})}> (2R)^{-1/p}$ (see Figure \ref{fig:1} for a schematic picture of the argument below). Therefore, $
\lambda_R<\tau$ a.s.\ on $\O\setminus\Dom$, where $\lambda_R$ is the stopping time given by 
\begin{equation}
\label{eq:def_LambdaR}
\lambda_R\stackrel{{\rm def}}{=}\inf\Big\{t\in [0,\tau)\,:\, \|v\|_{L^{2p}(0,t;H^{\beta})}\geq \frac{1}{(2R)^{1/p}}\Big\}
\end{equation}
where $\inf\emptyset \stackrel{{\rm def}}{=} \tau$. From the above observation, it follows that 
\begin{align}
\label{eq:property_psi_lambda_1}
\psi_R (\|v\|_{L^{2p}(0,\lambda_R;H^{\beta})}^p)&\geq 0\text{ a.s.\ on }\Dom,\\
\label{eq:property_psi_lambda_2}
\psi_R (\|v\|_{L^{2p}(0,\lambda_R;H^{\beta})}^p)&= \frac{1}{4R^2}\text{ a.s.\ on }\O\setminus\Dom.
\end{align}
Now, 
if \eqref{eq:contradiction_small_global} holds, then
applying the estimate of Step 1 with $\lambda=\lambda_R$, 
\begin{align*}
\E \big[\psi_R(\|v\|_{L^{2p}(0,\lambda_R;H^{\beta})}^p)\big]
&\ \stackrel{\eqref{eq:property_psi_lambda_1}}{\geq} 
\E \big[\one_{\O\setminus \Dom}\,\psi_R(\|v\|_{L^{2p}(0,\lambda_R;H^{\beta})}^p)\big]\\
&\ \stackrel{\eqref{eq:property_psi_lambda_2}}{=}\frac{1}{4R^2} \P(\O\setminus\Dom)\\
&\ \stackrel{\eqref{eq:contradiction_small_global}}{\geq}  \frac{\varepsilon_0}{4R^2}>\E[\|v_0\|_{B^{5/2-2\g}_{2,p}}^p],  
\end{align*}
where in the last step we used that $\E[\|v_0\|^p_{B^{5/2-2\g}_{2,p}}]\leq \varepsilon_0/(8R^2)$ and $\varepsilon_0>0$.
The above contradicts Step 1. Therefore \eqref{eq:contradiction_small_global} is false in case $\E[\|v_0\|^p_{B^{5/2-2\g}_{2,p}}]\leq \varepsilon_0/(8R^2)$. Hence, the claim of Step 2 follows.

\begin{figure}[h!]
\begin{center}
\begin{tikzpicture}[scale=0.7]
\draw[-stealth, line width=0.25mm] (0, 0) -- (6.5, 0);
\draw[-stealth, line width=0.25mm] (0, 0) -- (0, 4.5);
\draw[scale=0.7, domain=0:7, smooth, variable=\x, Red, line width=0.5mm] plot ({\x}, {\x*3-0.05*\x*\x*9});
\draw[scale=0.7, domain=0:1.4, smooth, variable=\x, black, line width=0.5mm] plot ({\x}, {\x*3-0.05*\x*\x*9}) node[right, xshift=-0.75cm, yshift=0.25cm] {$\x(t)$};
\draw (0,3) -- (6,3) node[above, xshift=1.1cm, yshift=-0.5cm]{$\quad \E\|u_0\|_{B^{\g(1-2/p)}_{2,p}}^p$};
\draw[dashed] (0,3.52) -- (6,3.52) node[left, xshift=-4.2cm]{$\displaystyle{\frac{1}{4R^2}}$}  ;
\draw (0,0) -- (2.9,4.388) node[left, xshift=0.7cm, yshift=-0.05cm]{$\displaystyle{\frac{x}{2R}}$}  ;

\filldraw[black] (2.32,3.51) circle (2pt); 
\node[Red] at (5.2,1.5) {$\psi_{R}(x)$};
\draw[dashed] (2.32,-0.1) node[left, xshift=0.6cm, yshift=-0.5cm]{$\displaystyle{1/(2R)}$} -- (2.32,3.52);
\end{tikzpicture}
\caption{Illustration for Step 2 of Theorem \ref{t:small_implies_global} -- $\x(t)=
\|v\|_{L^{2p}(0,t;H^{\beta_0})}^p$.}
\label{fig:1}
\end{center}
\end{figure}

\emph{Step 3: Conclusion}. Let $\delta_0=\varepsilon_0/(8R^2)$ and assume 
$\E[\|v_0\|_{B^{5/2-2\g}_{2,p}}^p]\leq \delta_0$.
Again, by Lemma \ref{lem:nonlinearity_estimate} and \cite[Theorem 4.10(1)]{AV19_QSEE2}, it follows that the $p$-solution $(u,\tau)$ satisfies the following blow-up criterion:
\begin{equation}
\label{eq:blow_up_criterion_small_implies_global}
\P(\tau<\infty, \|u\|_{L^{2p}(0,\tau;H^{\beta})}<\infty)=0.
\end{equation}
Recall that, by Step 2, $\P(\Dom)>1-\varepsilon_0$ where $\Dom\stackrel{{\rm def}}{=}\{\|v\|_{L^{2p}(0,\tau;H^{\beta})}\leq r_0\}$ with $r_0=1/(2R)^{1/p}$. Hence,
\begin{align*}
\P(\{\tau<\infty\}\cap \Dom)
&= \P(\{\tau<\infty, \|v\|_{L^{2p}(0,\tau;H^{\beta})}<\infty\}\cap \Dom)\\
&\stackrel{(i)}{=} \P(\{\tau<\infty, \|u\|_{L^{2p}(0,\tau;H^{\beta})}<\infty\}\cap \Dom)
\stackrel{\eqref{eq:blow_up_criterion_small_implies_global}}{=} 0,
\end{align*}
where in $(i)$ we used that $\|v\|_{L^{2p}(0,t;H^{\beta})}<\infty$ implies $ \|u\|_{L^{2p}(0,t;H^{\beta})}<\infty$ if $t<\infty$.
In particular, $\tau=\infty$ a.s.\ on $\Dom$. This concludes the proof of Theorem \ref{t:small_implies_global}.
\end{proof}

\begin{proof}[Proof of the estimate \eqref{eq:estimate_small_implies_global}]
Let $R$ and $\lambda_R$ be as in the proof of Theorem \ref{t:small_implies_global}. Steps 2 and 3 yield that $\lambda_R=\tau=\infty$ a.s.\ on $\Dom$, where the latter is as in \eqref{eq:contradiction_small_global}.
Since
$
\frac{x}{2R}\leq \psi_{R}(x)$ for all $ x\in \big[0, \frac{1}{2R}\big]$ (see Figure \ref{fig:1}),
from \eqref{eq:estimate_psiR} it follows that 
$$
\E \big[\|v\|_{L^{2p}(0,\lambda_R;H^{\beta})}^p\big]
\leq 2R \,\E[\|v_0\|_{B^{5/2-2\g}_{2,p}}^p],
$$
where $v=u-\int_{\T^3}u_0(x)\,\dd x$. 
Now the estimate \eqref{eq:estimate_small_implies_global} is a consequence of Theorem \ref{t:smr_theta}\eqref{it:smr_theta_2} and the definition of $\lambda_R$, see \eqref{eq:def_LambdaR}. 
\end{proof}

\section{Global well-posedness of HNSEs with high viscosity}
\label{s:global_high_viscosity}
In this section, we establish the global well-posedness of HSNEs on $\T^3$ with high viscosity $\mu\gg1$:
\begin{equation}
\label{eq:hyper_NS_det}
\left\{
\begin{aligned}
\partial_t u &+ \p[\nabla \cdot(u\otimes u)] 
+ (-\Delta)^{\g} u -\mu\Delta u =0 & \text{ on }&\T^3,\\
\nabla\cdot u&=0&\text{ on }&\T^3,\\
u(0,\cdot)&=u_0&\text{ on }&\T^3.
\end{aligned}
\right.
\end{equation}

More precisely, we prove the following result.

\begin{theorem}[Global well-posedness of HNSEs with high viscosity]
\label{t:global_high_viscosity}
Let $1<\g<\frac{5}{4}$. Fix $N\geq 1$ and $\frac{5}{2}-2\g<\delta<\g$.
Then there exists $\mu_0>0$ depending only on $\delta,\g$ and $N$ such that, if $\mu\geq \mu_0$, then for all initial data $u_0$ satisfying
\begin{equation}
\label{eq:initial_data_bounded_N}
u_0\in \Hs^{\delta}(\T^3)\quad \text{ and }\quad \|u_0\|_{H^{\delta}(\T^3;\R^3)}\leq N,
\end{equation}
the {\normalfont{HSNEs}} \eqref{eq:hyper_NS_det} has a unique global solution 
\begin{equation}
\label{eq:regularity_deterministic_solution}
u\in H^1_{\loc}([0,\infty);\Hs^{\delta-\g}(\T^3))\cap L^2_{\loc}([0,\infty);\Hs^{\delta+\g}(\T^3))\subseteq C([0,\infty);\Hs^{\delta}(\T^3)),
\end{equation} 
satisfying
\begin{equation}
\label{eq:estimate_boundedness_high_viscosity}
\textstyle
\big\|u-\int_{\T^3}u_0(x)\,\dd x \big\|_{L^2(\R_+;H^{\delta+\g}(\T^3;\R^3))}
+
\sup_{t>0}\|u(t)\|_{H^{\delta}(\T^3;\R^3)}\lesssim_{\delta,\g,N} \|u_0\|_{H^\delta(\T^3;\R^3)}.
\end{equation}
Moreover, for all $\delta_0<\delta$, there exists $\eta_0>0$ depending only on $\delta,\g,N,\delta_0$ such that
\begin{equation}
\label{eq:estimate_boundedness_high_viscosity_decay}
\textstyle
\big\|u(t)-\int_{\T^3}u_0(x)\,\dd x \big\|_{H^{\delta_0}(\T^3;\R^3)}\leq_{\delta,\delta_0,\g,N}  e^{-\eta_0 t} \|u_0\|_{H^\delta(\T^3;\R^3)}\ \  \text{ for all }t>0.
\end{equation}
\end{theorem}

The result above heavily relies on the fact that $H^{\delta}$ with $\delta>\frac{5}{2}-2\g$ is \emph{subcritical} for the HNSEs \eqref{eq:hyper_NS_det}, as discussed in Subsection \ref{ss:novelty}. We expect this result to no longer hold in the case of critical data. Moreover, the reader may verify that the argument presented below remains valid even if the lower-order dissipation term $-\mu\Delta u$ is replaced by the (seemingly) weaker dissipation term $-\mu u$. 

The proof of Theorem \ref{t:global_high_viscosity} follows a similar approach to that of Theorem \ref{t:small_implies_global}. However, in this case, $L^p$-theory is not required, and the proof can be entirely framed within a Hilbert space setting. As commented in Subsection \ref{ss:novelty}, the latter is \emph{not} true for Theorem \ref{t:small_implies_global}. 
For later use, let us note that, if $u_0\in \Bs^{\g(1-2/p)}_{2,p}(\T^3)$ for some $p>\frac{4\g}{6\g-5}$, then the solution $u$ provided by Theorem \ref{t:global_high_viscosity} also satisfy 
\begin{equation}
\label{eq:comparison_p_solution_etc}
u\in L^p_{\loc}([0,\infty);\Hs^\g(\T^3))\cap C([0,\infty);\Bs^{\g(1-2/p)}_{2,p}(\T^3)).
\end{equation}
The latter can be shown by exploiting the subcriticality of the condition $p>\frac{4\g}{6\g-5}$ and the uniqueness of $p$-solutions as in Theorem \ref{t:local_HNSEs_hyper_small_viscosity}. In particular, in this case, $u$ is a $p$-solution to \eqref{eq:hyper_NS_det} (the latter can be defined as in Definition \ref{def:p_solution}).

\begin{proof}
For clarity, we divide the proof into several steps. 

\smallskip

\emph{Step 1: For all $v,v'\in \Hs^\delta$ and $\delta\in [0,\g]$, we have}
\begin{equation}
\label{eq:interpolation_deterministic}
\|\nabla\cdot(v\otimes v')\|_{H^{\delta-\g}}\lesssim \|v\|_{H^{\beta}}\|v'\|_{H^{\beta}}
\end{equation}
\emph{where $\beta=\frac{5}{4}-\frac{\g}{2}+\frac{\delta}{2}$.}
By Lemma \ref{lem:nonlinearity_estimate} and bilinear interpolation (see e.g., \cite[Theorem 4.1.1]{BeLo}), it is enough to show \eqref{eq:interpolation_deterministic} in the case $\delta=\g$.
The latter case follows by noticing that $\nabla\cdot(v\otimes v')=(v\cdot\nabla)v'$ as $\nabla \cdot v=0$ and
$$
\|(v\cdot\nabla)v'\|_{L^2}\lesssim\|v\|_{L^{12}}\|\nabla v'\|_{L^{(12)/5}}\lesssim\|v\|_{H^{5/4}}\|v'\|_{H^{5/4}}, 
$$
where we used the H\"older inequality and Sobolev embeddings.

\smallskip

For the remainder of the proof, we fix $\delta$ such that  
\begin{equation*}
 \tfrac{5}{2}-2\g<\delta <\g.
\end{equation*}
From \eqref{eq:interpolation_deterministic} and standard well-posedness results for parabolic PDEs yield the existence of a unique local solution $(u,\tau)$ satisfying
\begin{equation}
\label{eq:regularity_u_det}
u\in H^1_{\loc}([0,\tau);\Hs^{\delta-\g}(\T^3))\cap  L^2_{\loc}([0,\tau);\Hs^{\delta+\g}(\T^3))\subseteq C([0,\tau);\Hs^\delta(\T^3)).
\end{equation}
Moreover, we also have
\begin{equation}
\label{eq:blow_up_deterministic}
\textstyle
\tau<\infty \qquad \Longrightarrow \qquad \sup_{t\in [0,\tau) }\|u(t)\|_{H^{\delta}}=\infty.
\end{equation}
The claims \eqref{eq:regularity_u_det}-\eqref{eq:blow_up_deterministic} follow, e.g., from either \cite[Theorem 2.1 and Corollary 2.3(i)]{CriticalQuasilinear} or as in the proof of Theorem \ref{t:local_HNSEs_hyper_small_viscosity}.

As in the proof of Theorem \ref{t:small_implies_global}, it is convenient to reformulate \eqref{eq:hyper_NS_det} in terms of $v=u-\int_{\T^3} u_0(x)\,\dd x$, where $\overline{u_0}=\int_{\T^3} u_0(x)\,\dd x$. Note that $\int_{\T^3} u(t,x)\,\dd x=\int_{\T^3} u_0(x)\,\dd x$ as the deterministic HNSEs \eqref{eq:hyper_NS_det} preserve the mean. Thus, $(v,\tau)$ is a local solution to 
\begin{equation}
\label{eq:hyper_NS_det_v}
\left\{
\begin{aligned}
\partial_t v&+(\overline{u_0}\cdot \nabla) v+ \p[\nabla \cdot(v\otimes v)] 
+ (-\Delta)^{\g} v -\mu\Delta v =0 & \text{ on }&\T^3,\\
\nabla\cdot v&=0&\text{ on }&\T^3,\\
v(0,\cdot)&=v_0\stackrel{{\rm def}}{=}u_0-\overline{u_0}&\text{ on }&\T^3.
\end{aligned}
\right.
\end{equation}
Next, we discuss two consequences of energy inequality. By testing \eqref{eq:hyper_NS_det_v} with $v$ and using the usual cancellation $\int_{\T^3} (\overline{u_0}\cdot\nabla) v\cdot v\,\dd x =\int_{\T^3} [\nabla\cdot (v\otimes  v)] \cdot v\,\dd x =0$ for $v\in \Hs^\g$, the solution $v$ to \eqref{eq:hyper_NS_det_v} satisfies, for all $t<\tau$,
\begin{equation}
\label{eq:energy_inequality_mu_global}
\frac{1}{2}\,\|v(t)\|_{L^2}^2+ \int_0^t \|(-\Delta)^{\g/2} v\|_{L^2}^2\,\dd s 
 +\mu \int_0^t \|\nabla v(s)\|_{L^2}^2\,\dd s = \frac{1}{2}\,\|v_0\|_{L^2}^2.
\end{equation}
In particular, by the Poincar\'e inequality, for some $c_0$ depending only on $\g$,
\begin{align}
\label{eq:energy_inequality_mu}
\|v(t)\|_{L^2}^2\leq e^{-tc_0}\|u_0\|_{L^2}^2 \text{ for all $t\in (0,\tau)$.}
\end{align}
In the remaining part of the proof, we verify that  $\tau=\infty$ and that \eqref{eq:estimate_boundedness_high_viscosity} holds. The estimate in \eqref{eq:estimate_boundedness_high_viscosity_decay} follows by interpolating \eqref{eq:estimate_boundedness_high_viscosity} and \eqref{eq:energy_inequality_mu} with $\tau=\infty$. 
To prove \eqref{eq:estimate_boundedness_high_viscosity}, we use standard energy methods. More precisely, computing $\frac{1}{2}\frac{\dd}{\dd t}\|v(t)\|_{H^{\delta}}^2$, it follows from Step 1 that 
\begin{align*}
\textstyle
\frac{\dd}{\dd t}\|v(t)\|_{H^{\delta}}^2
+\|v(t)\|_{H^{\delta+\g}}^2&+\mu \|v(t)\|_{H^{\delta+1}}^2 \\
&\leq C_0 \|\p[\nabla \cdot(v\otimes v)]\|_{H^{\delta-\g}}^2 + C_0 \|(\overline{u_0}\cdot \nabla) v\|_{H^{\delta-\g}}^2\\
&\leq C_1 \|v\|^4_{H^\beta} + C_1 \| v\|_{H^{\delta-\g+1}}^2\\
&\leq C_1 \|v\|^4_{H^\beta} + C_2 \| v\|_{L^2}^2+ \tfrac{1}{2}\|v\|_{H^{\delta+\g}}^2
\end{align*}
for a.a.\ $t<\tau$, 
where $C_0,C_1$ and $C_2$ are constants depending only on $\delta,\g$ and $N$ and $\beta=\frac{5}{4}-\frac{\g}{2}+\frac{\delta}{2}$. In particular, by \eqref{eq:energy_inequality_mu}, for all $t<\tau$, 
\begin{equation}
\label{eq:energy_estimate_hyperviscous}
\textstyle
\sup_{s\in [0,t]}
\|v(s)\|_{H^{\delta}}
+\|v\|_{L^2(0,t;H^{\delta+\g})} \lesssim_{\delta,\g,N} \|u_0\|_{H^\delta} +
\|v\|_{L^4(0,t;H^{\beta})}^2.
\end{equation}
In the following step, we show how to estimate the last term on the RHS\eqref{eq:energy_estimate_hyperviscous}. 
\smallskip

\emph{Step 2: Let $\delta$ be as above. There exist $\beta_0>\beta$ and a constant $K_0>0$ such that, for all $t<\tau$,} 
\begin{equation}
\label{eq:interpolation_estimate_deterministic}
\|v\|_{L^{4}(0,t;H^{\beta_0})}\leq K_0 \|v\|_{L^\infty(0,t;H^{\delta})}^{1/2}\|v\|_{L^2(0,t;H^{\delta+\g})}^{1/2}.
\end{equation} 

The above should be compared with the one obtained in Step 1 in the proof of Theorem \ref{t:small_implies_global}.
Note that the subcriticality of the space $H^{\delta}$ is encoded in the condition $\beta_0>\beta$.

Arguing as in \eqref{eq:embedding_paths_smr_global}, by standard interpolation estimates, we have
$
L^{\infty}(0,t;H^{\delta})\cap L^2(0,t;H^{\delta+\g})\embed L^4(0,t;H^{\delta+\g/2}),
$
and with a corresponding estimate independent of $t>0$. Hence, \eqref{eq:interpolation_estimate_deterministic} follows from the latter and $\beta_0\stackrel{{\rm def}}{=}\delta+\g/2>\beta$ as $\delta>\frac{5}{2}-2\g$. 

\smallskip

\emph{Step 3: Let $\beta$ and $\beta_0$ be as in Step 1 and 2, respectively. Then there exists $R(\delta,\g,N)>0$ such that, for all $t<\tau$ and $\mu>0$,
\begin{equation}
\label{eq:estimate_R_high_viscosity_not_good_form}
\|v\|_{L^{4}(0,t;H^{\beta_0})}
\leq  R\|u_0\|_{H^\delta}+ R\mu^{-\coeff}\|v\|_{L^{4}(0,t;H^{\beta_0})}^2
\end{equation}
where $\coeff>0$ depends only on $\delta$ and $\g$.}

Let us begin with observing that \eqref{eq:energy_inequality_mu_global} and interpolation yield, for all $t<\tau$,
\begin{align*}
\|v\|_{L^{4}(0,t;L^2)}
&\lesssim\|v\|_{L^{4}(0,t;H^{1/2})}\\
&\lesssim \|v\|_{L^{2}(0,t;H^1)}^{1/2} \|v\|_{L^{\infty}(0,t;L^2)}^{1/2}\\
&= \mu^{-1/4} \big(\mu^{1/2}\|v\|_{L^{2}(0,t;H^1)}\big)^{1/2}\|v\|_{L^{\infty}(0,t;L^2)}^{1/2}\\
&\lesssim \mu^{-1/4} \|u_0\|_{H^{\delta}},
\end{align*} 
for all $t<\tau$. Note also that the implicit constant is independent of $t$.
Moreover, as $[L^2,H^{\beta_0}]_{r_0}=H^{\beta}$ where $r_0=\beta/\beta_0\in(0,1 )$,  \eqref{eq:energy_estimate_hyperviscous} and \eqref{eq:interpolation_estimate_deterministic} imply, for all $t<\tau$,
\begin{align*}
\|v\|_{L^4(0,t;H^{\beta_0})}
&\lesssim \|u_0\|_{H^\delta}
+\|v\|_{L^{4}(0,t;L^{2})}^{2(1-r_0)}
\|v\|_{L^{4}(0,t;H^{\beta_0})}^{2r_0}\\
&\lesssim \|u_0\|_{H^\delta}
+ \mu^{-(1-r_0)/2}\|u_0\|_{H^{\delta}}^{2(1-r_0)}
\|v\|_{L^{4}(0,t;H^{\beta_0})}^{2r_0}\\
&\lesssim \|u_0\|_{H^{\delta}}+
\|u_0\|_{H^{\delta}}^{2}+
\mu^{-(1-r_0)/(2r_0)}\|v\|_{L^{4}(0,t;H^{\beta_0})}^2;
\end{align*}
again, with an implicit constant independent of $t>0$. The previous and the assumption $\|u_0\|_{H^\delta}\leq N$ lead to \eqref{eq:estimate_R_high_viscosity_not_good_form}.

The estimate \eqref{eq:estimate_R_high_viscosity_not_good_form}
is the key point to prove Theorem \ref{t:global_high_viscosity}. Indeed, even if the RHS grows quadratically w.r.t.\ $\|v\|_{L^{2p}(0,t;H^{\beta_0})}$, we can tune the parameter $\mu>0$ so that the constant in front of such a quadratic term is as small as needed.
To exploit such an idea, let us rewrite \eqref{eq:estimate_R_high_viscosity_not_good_form} in the form
\begin{equation}
\label{eq:estimate_R_high_viscosity}
\psi_{R,\mu} (\|v\|_{L^{2p}(0,t;H^{\beta_0})}) 
\leq \|u_0\|_{H^\delta}  \ \ \text{for $t\in (0,\tau)$},
\end{equation} 
where $
\psi_{R,\mu} (x)= R^{-1}x-\mu^{-\coeff} x^2$ and $ x\in \R$. 

\smallskip

\emph{Step 4: If $\mu\geq  (8R^2N)^{1/\coeff}$, then}
$$
\|v\|_{L^{4}(0,\tau;H^{\beta_0})}\leq \mu^{\coeff}/(2R) \quad \text{ \emph{and} }\quad \|v\|_{L^{4}(0,\tau;H^{\beta_0})}\leq 2R \|u_0\|_{H^\delta}.
$$

We prove the claim by contradiction. 
The argument is now similar to the one used in Step 2 of Theorem \ref{t:small_implies_global}. For a schematic picture, the reader is referred to Figure \ref{fig:1}. Assume that $\mu\geq  (8R^2N)^{1/\coeff}$ and 
\begin{equation}
\label{eq:contradiction_Lbeta0}
\|v\|_{L^{4}(0,\tau;H^{\beta_0})}>\mu^{\coeff}/(2R).
\end{equation}
Note that $
\|v\|_{L^{4}(0,\tau;H^{\beta_0})}=\sup_{0\leq t\leq \tau}
\|v\|_{L^{4}(0,t;H^{\beta_0})}
$ and $[0,\tau)\ni t\mapsto \|v\|_{L^{2p}(0,t;H^{\beta_0})}$ is a continuous function starting at 0. Hence, \eqref{eq:contradiction_Lbeta0} implies the existence of a time $t_0<\tau$ for which 
\begin{equation}
\label{eq:t_0_matches_maximum}
\|v\|_{L^{4}(0,t_0;H^{\beta_0})}=\mu^{\coeff}/(2R).
\end{equation}
We now prove that \eqref{eq:t_0_matches_maximum} contradicts \eqref{eq:estimate_R_high_viscosity}. Indeed, if \eqref{eq:t_0_matches_maximum} holds, then 
\begin{align*}
\psi_{R,\mu}(\|v\|_{L^{4}(0,t_0;H^{\beta_0})})
= \psi_{R,\mu}\Big(\frac{\mu^{\coeff}}{2R}\Big) 
= \frac{\mu^{{\coeff}}}{4R^2}
\stackrel{(i)}{\geq } 2N >\|u_0\|_{H^\delta}
\end{align*}
where in $(i)$ we used the assumption $\mu\geq  (8R^2N)^{1/{\coeff}}$. The above contradicts \eqref{eq:estimate_R_high_viscosity} and therefore Step 1 is proved.

To prove the claimed estimate, it is enough to note that (see Figure \ref{fig:1} for a similar situation)
$
\frac{x}{2R}\leq \psi_{R,\mu}(x)$ for all $x\in \big[0, \frac{\mu^{\coeff}}{2R}\big].
$
Hence, the claim of Step 4 follows from \eqref{eq:estimate_R_high_viscosity} and the previously established fact that $
\|v\|_{L^{4}(0,\tau;H^{\beta_0})}\leq\frac{\mu^{\coeff}}{2R}$.

\smallskip

\emph{Step 5: Proof of $\tau=\infty$ and \eqref{eq:estimate_boundedness_high_viscosity}}. Now, assume that $\mu\geq  (8R^2N)^{1/{\coeff}}$. Hence,  
\begin{align*}
\textstyle
\sup_{t\in [0,\tau)}\|v(t)\|_{H^\delta}+
\|v\|_{L^2(0,\tau;H^{\delta+\g})}
&\stackrel{\eqref{eq:energy_estimate_hyperviscous}}{\leq} C_0\big(\|u_0\|_{H^\delta}+ \|v\|_{L^4(0,\tau;H^{\beta})}^2\big)\\
& \ \ \leq C_0\big(\|u_0\|_{H^\delta}+2R \|u_0\|_{H^\delta}^2\big),
\end{align*}
where $C_0>0$ is independent of $\mu$, and in the last step we used Step 4. Since $\|u_0\|_{H^\delta}\leq N$, it follows that $\overline{u_0}\leq N$ and
\begin{equation}
\textstyle
\label{eq:estimate_boundedness_high_viscosity_tau}
\|u-\overline{u_0}\|_{L^2(0,\tau;H^{\delta+\g})}+ \sup_{t\in [0,\tau)}\|u(t)\|_{H^\delta}\leq R_0\|u_0\|_{H^\delta}
\end{equation}
for some constant $R_0$ depending only on $\delta,\g$ and $N$.

From the above and the blow-up criterion in \eqref{eq:blow_up_deterministic}, it follows that $\tau=\infty$. 
Therefore, \eqref{eq:estimate_boundedness_high_viscosity} follows from \eqref{eq:estimate_boundedness_high_viscosity_tau}. 
As mentioned below \eqref{eq:energy_inequality_mu}, the latter with $\tau=\infty$ and \eqref{eq:estimate_boundedness_high_viscosity} yield \eqref{eq:estimate_boundedness_high_viscosity_decay}.
This concludes the proof of Theorem \ref{t:global_high_viscosity}.
\end{proof}

\section{Global smooth solutions of stochastic HNSEs via scaling limit}
\label{s:scaling_limit}
In this section, we finally give the proof of Theorem \ref{t:main_theorem_L2}. 
The key ingredient is a scaling limit for the truncated version of the HNSEs \eqref{eq:hyper_NS}:
\begin{equation}
\label{eq:hyper_NS_cut_off}
\left\{
\begin{aligned}
\partial_t v&+\phi_{R,r}(v)\,\p [\nabla \cdot (v\otimes v)] 
&\\
&=-(-\Delta)^{\g} v+\sqrt{\frac{3\mu}{2}}\sum_{k,\alpha} \theta_k\,\p[(\sigma_{k,\alpha}\cdot\nabla) v ]\circ \dot{W}^{k,\alpha} &\text{ on }&\T^3,\\
\vspace{0.1cm}
v(0,\cdot)&=v_0&\text{ on }&\T^3,
\end{aligned}
\right.
\end{equation}
where, for $R>0$, $r\in (0,\g)$ and a smooth function satisfying $\phi=1$ on $[0,1]$ and $\phi=0$ on $[2,\infty)$, we set
\begin{equation}
\label{eq:definition_cutoff_phiRr}
\phi_{R,r}(v)\stackrel{{\rm def}}{=}\phi\big(R^{-1} \|v\|_{H^r(\T^3;\R^3)}\big).
\end{equation}
Finally, as above, $\p$ is the Helmholtz projection defined in Subsection \ref{ss:helmholtz_related}. 

We perform the above mentioned scaling result for \eqref{eq:hyper_NS_cut_off} with the scale parameter $\theta=\theta^n$
 given by
\begin{equation}
\label{eq:choice_theta_n}
\theta^n =\frac{\Theta^n}{\|\Theta^n\|_{\ell^2}} \qquad \text{ with } \qquad \Theta^n_k =\frac{1}{|k|^\alpha}\one_{\{n \leq |k|\leq 2n\} } \ \text{ for }k\in \Z^3_0,
\end{equation}
where $\alpha>0$. One can check that the above sequence satisfies (cf.\ \cite[eq.\ (1.9)]{L23_enhanced})
\begin{equation}
\label{eq:property_sequence_thetan}
\|\theta^n\|_{\ell^2}=1 \ \text{ and }\ \#\, \supp\theta^n<\infty\ \text{ for all } n\geq 1, \qquad \lim_{n\to \infty}\|\theta^n\|_{\ell^\infty}=0.
\end{equation}
The reason to consider the sequence \eqref{eq:choice_theta_n} comes from the result \cite[Theorem 5.1]{FL19} (see also \cite[Theorem 3.1]{L23_enhanced}) where an explicit convergence of the It\^o-Statonovich correction $\LLnn$ defined in \eqref{eq:Ito_stratonovich_change} as $n\to \infty$ is proved.

\smallskip

To state the main result of this section, let us note that unique, local, and maximal $p$-solutions to \eqref{eq:hyper_NS_cut_off} can be defined analogously to those of \eqref{eq:hyper_NS} given in Definition \ref{def:p_solution}. In addition, we say that $(v,\tau)$ is a global unique $p$-solution if $(v,\tau)$ is a unique maximal $p$-solution to \eqref{eq:hyper_NS_cut_off} with $\tau=\infty$ a.s. In the latter case, we simply write $v$ instead of $(v,\tau)$.
Finally, for $\g>1$, $N\geq 1$ and $p\in (2,\infty)$, we set
\begin{equation*}
\mathcal{B}_p(N)
\stackrel{{\rm def}}{=} \big\{v\in \Bs^{\g(1-2/p)}_{2,p}(\T^3)\,:\, \|v\|_{B^{\g(1-2/p)}_{2,p}(\T^3;\R^3)}\leq N\big\}
\end{equation*}
where, for convenience, we did not display the dependence on $\g>1$. Note that $\|v\|_{L^2(\T^3;\R^3)}\leq N$ if $v\in 
\mathcal{B}_p(N)$.

\begin{theorem}[Scaling limit for stochastic HNSEs with cutoff]
\label{t:scaling_limit_HNSE}
Let $1<\g<\frac{5}{4}$, $\mu>0$, $R>0$ and $N\geq 1$. Assume that $p\in (2,\infty)$ and $r>0$ satisfy
\begin{equation*}
p>\frac{4\g}{6\g-5} \qquad \text{ and }\qquad
\frac{5}{2}-2\g< r< \g\Big(1-\frac{2}{p}\Big).
\end{equation*}
Let $(\theta^n)_{n\geq 1}$ be as in \eqref{eq:property_sequence_thetan}. 
Then, for all $T<\infty$ and $r_0 <\g(1-\frac{2}{p})$, 
we have
\begin{equation}
\label{eq:scaling_limit_HNSE}
\lim_{n\to \infty}\sup_{v_0\in \mathcal{B}_p(N)} \P\Big(\sup_{t\in[0,T]}\|v^n(t)-\vd(t) \|_{H^{r_0}(\T^3;\R^3)}>\varepsilon\Big)=0  \ \text{ for all }\varepsilon>0,
\end{equation}
where $v^n$ and $\vd$ denote the unique global $p$-solution to \eqref{eq:hyper_NS_cut_off} with $\theta=\theta^n$ and to 
\begin{equation}
\label{eq:hyper_NS_cut_off_det}
\left\{
\begin{aligned}
\partial_t \vd&+\phi_{R,r}(\vd)\, \p [\nabla \cdot (\vd\otimes \vd) ]
=\frac{3\mu}{5}\Delta \vd- (-\Delta)^{\g} \vd & \text{on }&\T^3,\\
\vd(0,\cdot)&=v_0 & \text{on }&\T^3;
\end{aligned}
\right.
\end{equation}
respectively.
\end{theorem}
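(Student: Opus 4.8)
The plan is to follow the by-now standard route for scaling limits of transport-noise SPDEs, as in \cite[Theorem 1.4]{FL19} or \cite[Section 2]{A22}, but carried out in the $L^p$-in-time / weak-in-space functional setting dictated by the supercriticality of the energy space. The presence of the cut-off $\phi_{R,r}$ is what makes the argument clean: it forces \emph{global} existence of both $v^n$ and $\vd$, renders the nonlinearity globally Lipschitz from $H^r$-bounded sets into $\Hs^{-\g}$, and — crucially — gives a priori bounds that are \emph{uniform in $n$ and in $v_0\in\mathcal B_p(N)$}. So the heart of the matter is a compactness–plus–identification scheme: (i) derive $n$-uniform bounds for $v^n$ in a path space compactly embedded in $C([0,T];H^{r_0})$; (ii) pass to a weak/martingale limit; (iii) identify the limit as the unique solution $\vd$ of \eqref{eq:hyper_NS_cut_off_det}, using the explicit convergence of the Itô–Stratonovich corrector $\LLnn \to \frac{3\mu}{5}\Delta$; (iv) upgrade weak convergence in law to the convergence in probability, uniformly over $v_0\in\mathcal B_p(N)$, stated in \eqref{eq:scaling_limit_HNSE}.

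**Key steps.** First I would record the $n$-uniform estimates. By Theorem \ref{t:smr_theta} applied to the linearized (hyperviscous turbulent Stokes) problem — whose stochastic maximal $L^p$-regularity constant depends on $\theta^n$ only through $\|\theta^n\|_{\ell^2}\equiv 1$, hence is uniform in $n$ — together with Lemma \ref{lem:nonlinearity_estimate} and the cut-off bound $\|\phi_{R,r}(v^n)\p[\nabla\cdot(v^n\otimes v^n)]\|_{\Hs^{-\g}}\lesssim_R 1$, one obtains
\begin{equation*}
\sup_{n\ge 1}\ \sup_{v_0\in\mathcal B_p(N)}\ \E\,\Big\|v^n\Big\|_{H^{\vartheta,p}(0,T;H^{\g(1-2\vartheta)})\cap L^p(0,T;H^\g)}^p<\infty,\qquad \vartheta\in[0,1/2).
\end{equation*}
Since $\vartheta>0$, this space embeds compactly into $C([0,T];H^{r_0})$ for any $r_0<\g(1-2/p)$ (trace embedding \cite[Theorem 1.2]{ALV21} into $C([0,T];B^{\g(1-2/p)}_{2,p})$, then compact Sobolev/Besov embedding in space and a standard Aubin–Lions-type loss of time regularity). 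Tightness of the laws of $(v^n)$ on $C([0,T];H^{r_0})$ (plus tightness of the driving noises on a suitable path space) follows, so along a subsequence we have convergence in law; by Skorokhod representation I may assume a.s.\ convergence $v^n\to v$ on a new probability space. Second, I would pass to the limit in the equation tested against smooth divergence-free $\varphi$: the linear hyperviscous term and the nonlinear term (using that $\phi_{R,r}$ is continuous on $H^r$, and $v^n\to v$ in $C([0,T];H^{r})$ since $r<\g(1-2/p)$) pass easily; the Itô transport term $\sqrt{\tfrac{3\mu}{2}}\sum_{k,\alpha}\theta^n_k\,\p[(\sigma_{k,\alpha}\cdot\nabla)v^n]\,\dd W^{k,\alpha}$ has quadratic variation controlled by $\|\theta^n\|_{\ell^\infty}^2\,\|v^n\|_{L^2(0,T;H^1)}^2\to 0$, so it vanishes in the limit; and the corrector $\LLnn v^n$ converges to $\frac{3\mu}{5}\Delta v$ by \cite[Theorem 5.1]{FL19} (as in \cite[Theorem 3.1]{L23_enhanced}) combined with the $n$-uniform $L^p(0,T;H^\g)$-bound to control the error $\LLnn v^n - \LLnn v$. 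Hence $v$ solves \eqref{eq:hyper_NS_cut_off_det}; by uniqueness for that deterministic problem (again Lemma \ref{lem:nonlinearity_estimate} and deterministic maximal $L^p$-regularity, cf.\ Lemma \ref{lem:max_reg_Lp} with $\mu$ there replaced by $\tfrac{3\mu}{5}$), $v=\vd$, the limit is deterministic, the whole sequence converges, and convergence in law to a constant upgrades to convergence in probability.

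**Uniformity in $v_0$.** The one genuinely nonroutine point is the \emph{uniform-in-$v_0$} statement \eqref{eq:scaling_limit_HNSE}: a single Skorokhod extraction handles one sequence of initial data, not the supremum over the ball. I would argue by contradiction: if \eqref{eq:scaling_limit_HNSE} fails, there are $\varepsilon,\eta>0$, $n_j\to\infty$, and $v_0^{j}\in\mathcal B_p(N)$ with $\P(\sup_{[0,T]}\|v^{n_j}-\vd^{(j)}\|_{H^{r_0}}>\varepsilon)>\eta$, where $\vd^{(j)}$ solves \eqref{eq:hyper_NS_cut_off_det} with datum $v_0^j$. Since $\mathcal B_p(N)$ is bounded in $B^{\g(1-2/p)}_{2,p}$, hence relatively compact in $H^{r_0}$, pass to a further subsequence with $v_0^j\to w_0$ in $H^{r_0}$; the $n$-uniform bounds (which depend on $v_0$ only through $N$) then give tightness of $(v^{n_j})$, and the identification step shows every subsequential limit solves \eqref{eq:hyper_NS_cut_off_det} with datum $w_0$ — here I need continuous dependence of $\vd$ on its initial datum in $H^{r_0}$, which follows from the same maximal-regularity estimates — so $v^{n_j}\to \vd^{(w_0)}$ in probability, while also $\vd^{(j)}\to\vd^{(w_0)}$ in $C([0,T];H^{r_0})$, contradicting the lower bound $\eta$. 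I expect this contradiction/compactness-in-$v_0$ bookkeeping, together with making the martingale-problem identification of the corrector limit fully rigorous at the level of the weak-in-space formulation, to be the most delicate parts; the rest is an assembly of Theorem \ref{t:smr_theta}, Lemma \ref{lem:nonlinearity_estimate}, and the cited corrector convergence.
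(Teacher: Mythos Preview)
Your approach is essentially the paper's: reduce the uniformity over $\mathcal{B}_p(N)$ to a sequential statement by contradiction (the paper extracts a \emph{weakly} convergent subsequence in $B^{\g(1-2/p)}_{2,p}$ and packages this as Proposition~\ref{prop:scaling_limit_HNSE_reduction}), obtain tightness from $\theta$-uniform a priori bounds (Theorem~\ref{t:HNSEs_cutoff_global} together with the time-regularity/martingale-decay Lemma~\ref{l:time_regularity_estimate}, which supplies both the $C^{s_1}(0,T;H^{-r_1})$ compactness ingredient and the $\|\theta^n\|_{\ell^\infty}$-factor you invoke), identify the limit via \cite[Theorem~5.1]{FL19} and weak--strong uniqueness (Proposition~\ref{prop:regularity_weak_sol_cut_off}, which bootstraps weak solutions back to $p$-solutions), and upgrade convergence in law to convergence in probability since the limit is deterministic. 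One small correction: the cut-off alone does \emph{not} give $\|\phi_{R,r}(v^n)\,\p[\nabla\cdot(v^n\otimes v^n)]\|_{\Hs^{-\g}}\lesssim_R 1$ unless $r\geq \beta=\tfrac{5}{4}-\tfrac{\g}{2}$; for general $r>\tfrac{5}{2}-2\g$ the paper uses the subcritical interpolation of Lemma~\ref{l:interpolation_subcriticality_r} to get $\lesssim_R \|v^n\|_{H^{\g}}^{1-\kappa}$ with $\kappa>0$, which is then absorbed by Young's inequality --- this is precisely how the uniform estimate in Theorem~\ref{t:HNSEs_cutoff_global} closes.
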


As it will be clear from the proof of Theorem \ref{t:main_theorem_L2} below, the possibility of choosing $r_0\geq r$ is of fundamental importance. The subcritical condition 
$p>\frac{4\g}{6\g-5}$ is used Theorem \ref{t:HNSEs_cutoff_global}.
The existence of a unique global $p$-solution to \eqref{eq:hyper_NS_cut_off} and \eqref{eq:hyper_NS_cut_off_det} is also part of the proof of Theorem \ref{t:scaling_limit_HNSE}.

Comments on the physical interpretation of the scaling limit of Theorem \ref{t:scaling_limit_HNSE} can be found \cite[Subsection 2.2]{A22}. Let us mention that the above can be regarded as a \emph{homogenization} result for the stochastic HNSEs with cutoff \eqref{eq:hyper_NS_cut_off} and \eqref{eq:hyper_NS_cut_off_det} can be thought of as the `effective' equation for the SPDE \eqref{eq:hyper_NS_cut_off} at `large scales'.

\smallskip

This section is organized as follows: In Subsection \ref{ss:global_smooth_proof}, we show how Theorem \ref{t:main_theorem_L2} can be derived from Theorem \ref{t:scaling_limit_HNSE} and the results of Sections \ref{s:local_well_stoch_problem} and \ref{s:global_high_viscosity}. In Subsections \ref{ss:global_with_cutoff} and \ref{ss:uniqueness_weak_solutions}, we analyze the global well-posedness of \eqref{eq:hyper_NS_cut_off} and \eqref{eq:hyper_NS_cut_off_det}, and discuss the uniqueness of weak solutions to \eqref{eq:hyper_NS_cut_off_det}, respectively. Finally, in Subsection \ref{ss:proof_scaling_limit_HNSE}, we prove Theorem \ref{t:scaling_limit_HNSE}.

\subsection{Global smooth solutions by transport noise -- Proof of Theorem \ref{t:main_theorem_L2}}
\label{ss:global_smooth_proof}

\begin{proof}[Proof of Theorem \ref{t:main_theorem_L2}]
By Theorem \ref{t:local_HNSEs_hyper_small_viscosity}, it is enough to show that \eqref{eq:global_high_probability} holds.
Following the proof of \cite[Theorem 1.6]{FL19}, the idea is first to prove the claim of Theorem \ref{t:main_theorem_L2} on a finite interval and second to combine the exponential decay to the mean value of solutions of the deterministic PDE \eqref{eq:hyper_NS_det} proved in Theorem \ref{t:global_high_viscosity}, and afterwards Theorem \ref{t:small_implies_global} to obtain global existence of smooth solutions. Let us mention that the second step is simpler than the one in \cite{FL19} since our scaling limit \eqref{eq:scaling_limit_HNSE} takes place in spaces of positive smoothness due to the $L^p$ in time setting with $p> 2$. 

We begin by collecting some facts. 
Set $p_\g\stackrel{{\rm def}}{=}\frac{4\g}{6\g-5}$, and let $p_\g<p<\infty$, $\varepsilon\in (0,1)$ and $N\geq 1$ be as in the statement of Theorem \ref{t:main_theorem_L2}. Moreover, let $r=r(p,\g)$ be defined via the relation $r=\frac{1}{2}(\frac{5}{2}-2\g)+ \frac{\g}{2}(1-\frac{2}{p})$. Since $p>p_\g$, it follows that  
\begin{equation}
\label{eq:assumption_exponent_main_proof}
\frac{5}{2}-2\g<r<\g\Big(1-\frac{2}{p}\Big).
\end{equation}
Next, by Theorem \ref{t:global_high_viscosity} and \eqref{eq:comparison_p_solution_etc}, there exist $\mu_0=\mu_0(p,\g,N)>0$, $\eta_0=\eta_0(p,\g)>0$ and $R_0=R_0(p,\g,N)>0$ such that, for all $u_0\in \mathcal{B}_p (N)$, there exists a unique global $p$-solution $\ud$ to \eqref{eq:hyper_NS_det} with $\mu$ replaced by $\frac{3\mu_0}{5}$ and initial data $u_0$ satisfying 
\begin{align}
\label{eq:proof_main_theorem_starting_det_1}
\sup_{t\geq 0}\|\ud(t)\|_{H^r}\leq R_0 \ \  \text{ and }\ \ 
\sup_{t\geq 0 }\Big(e^{\eta_0 t}\, \Big\|\ud(t)-\int_{\T^3}u_0(x)\,\dd x\Big\|_{H^{r}}\Big)\leq R_0.
\end{align}
Finally, let $\delta_0=\delta_0(p,\g,N,\varepsilon)>0$ be as in Theorem \ref{t:small_implies_global} with $\varepsilon_0=\varepsilon/2$, $N_0=(\sqrt{3\mu_0/2})\vee N$ and denote by $K_{p,\g}\geq 0$ the constant in the embedding $H^{r}(\T^3;\R^3)\embed B^{5/2-2\g}_{2,p_\g}(\T^3;\R^3)$. 
Without loss of generality, we may assume that $\delta_0< 2K_{p,\g}$. 

For exposition convenience, we now split the proof into two steps.
\smallskip

\emph{Step 1: For all $T>0$, there exists $\theta_0=\theta_0(p,\g,N,\varepsilon,T)\in \ell^2$ such that, for all $u_0\in \mathcal{B}_p(N)$,}
$$
\P\Big(\tau>T,\,\sup_{t\in [0,T]}\|u(t)-\ud (t)\|_{H^{r}}\leq \frac{\delta_0}{2K_{p,\g}}\Big)>1-\frac{\varepsilon}{2},
$$
\emph{where $(u,\tau)$ is the unique maximal $p$-solution to \eqref{eq:hyper_NS} with initial data $u_0$ and $\mu=\mu_0$.}
The claim of Step 1 essentially follows from the scaling limit of Theorem \ref{t:scaling_limit_HNSE}. 
Indeed, applying Theorem \ref{t:scaling_limit_HNSE} with $R=R_0+1$ as in \eqref{eq:proof_main_theorem_starting_det_1} and $\mu=\mu_0$, we obtain the existence of $\theta_0=\theta_0(p,\g,N,\varepsilon,T)\in \ell^2$ such that 
\begin{equation}
\label{eq:scaling_limit_HNSE_step1}
\sup_{u_0\in \mathcal{B}_p(N)} \P\Big(\sup_{t\in[0,T]}\|v(t)-\vd(t) \|_{H^{r}}\leq\frac{\delta_0}{2K_{p,\g}}\Big)>1-\frac{\varepsilon}{2},
\end{equation}
where $v$ and $\vd$ denote the unique global $p$-solution to \eqref{eq:hyper_NS_cut_off} with $\theta=\theta_0$ and to \eqref{eq:hyper_NS_cut_off_det} with $\mu=\mu_0$, respectively, both with initial data $v_0=u_0$. 
Now, by \eqref{eq:proof_main_theorem_starting_det_1}, 
$$
\phi_{R,r}(\ud(t))=1 \ \text{ for all }t>0.
$$
To prove that $\ud$ is the unique global $p$-solution to \eqref{eq:hyper_NS_cut_off_det}, it remains to discuss its uniqueness. Let $\vd$ be another $p$-solution to \eqref{eq:hyper_NS_cut_off_det}, and set 
$$
e_{{\rm det}}\stackrel{{\rm def}}{=}\inf\{t\in [0,+\infty)\,:\, \|\vd(t)\|_{H^{r}}\geq R\} \quad \text{ and }\quad \inf\emptyset\stackrel{{\rm def}}{=}+\infty,
$$
where we used that $B^{\g(1-2/p)}_{2,p}\embed H^{r}$ by \eqref{eq:assumption_exponent_main_proof}.
Hence, $\vd|_{[0,e_{{\rm det}})}$ is a $p$-solution of the HNSEs without cutoff \eqref{eq:hyper_NS_det} on $[0,e_{{\rm det}})$ with $\mu$ replaced by $\frac{5\mu}{3}$. By uniqueness of $\ud$, we obtain $\vd=\ud$ on $[0,e_{{\rm det}})$. Since $\sup_{t\geq 0}\|\ud(t)\|_{H^r}\leq R_0=R-1$, it readily follows that $e_{{\rm det}}=\infty$ and therefore  
\begin{equation}
\label{eq:ud_vd_equality}
\vd=\ud \ \  \text{ a.e.\ on }\ \R_+.
\end{equation}
A similar argument applies to $(u,\tau)$ and $v$. Indeed, by $\delta_0<2K_{p,\g}$, \eqref{eq:proof_main_theorem_starting_det_1} and \eqref{eq:ud_vd_equality}, 
\begin{equation}
\label{eq:v_vd_stopped_R}
\mathcal{O}_r\stackrel{{\rm def}}{=}
\Big\{\sup_{t\in[0,T]}\|v(t)-\vd(t) \|_{H^{r}}\leq \frac{\delta_0}{2K_{p,\g}}\Big\}
\subseteq
\Big\{\sup_{t\in[0,T]}\|v(t) \|_{H^{r}}< R\Big\}.
\end{equation}
The maximality of $(u,\tau)$ implies $\tau\geq e$ a.s.\ and
\begin{equation}
\label{eq:ud_vd_equality_stoc}
v=u \ \ \text{ a.e.\ on }\ [0,e)\times \O
\end{equation}
where 
$
e=\inf\{t\in [0,+\infty)\,:\, \|v(t)\|_{H^r}\geq R\} $ with $ \inf  \emptyset\stackrel{{\rm def}}{=}+\infty
$. Note that $e> T$ a.s.\ on $\mathcal{O}_r$ by \eqref{eq:v_vd_stopped_R}. Thus, the equalities 
\eqref{eq:ud_vd_equality} and \eqref{eq:ud_vd_equality_stoc} yield
$$
\P\Big(\tau>T,\,\sup_{t\in [0,T]}\|u(t)-\ud (t)\|_{H^{r}}\leq \frac{\delta_0}{2K_{p,\g}}\Big)\geq \P(\mathcal{O}_r) .
$$
The claim of Step 1 follows from \eqref{eq:scaling_limit_HNSE_step1}.

\smallskip

\emph{Step 2: Proof of \eqref{eq:global_high_probability}}. Recall that \eqref{eq:proof_main_theorem_starting_det_1} holds and let $\mu=\mu_0$. 
Choose $T_0=T_0(p,\g,N,\varepsilon)<\infty$ such that 
$
R_0 
e^{-\eta_0T_0}\leq \delta_0/(2K_{p,\g}).
$
Hence, for all $u_0\in \mathcal{B}_p(N)$, the unique global $p$-solution to the HNSEs \eqref{eq:hyper_NS_det} with increased viscosity $\mu=\frac{3\mu_0}{5}$ satisfies
$$
\Big\|\ud(T_0)-\int_{\T^3}u_0(x)\,\dd x\Big\|_{H^{r}}\leq \frac{\delta_0}{2K_{p,\g}}.
$$ 
Thus, by Step 1 applied with $T=T_0$, there exists $\theta_0=\theta_0(p,\g,N,\varepsilon)\in \ell^2$ for which the unique local $p$-solution $(u,\tau)$ to the stochastic HNSEs \eqref{eq:hyper_NS} satisfies
\begin{equation}
\label{eq:claim_T0_large_proof_main_result}
\P(\O_0)>1-\varepsilon/2\ \ \text{ with }\ \  
\O_0\stackrel{{\rm def}}{=}\big\{\tau>T_0,\, \|u(T_0)-\textstyle{\int_{\T^3}}u_0(x)\,\dd x\|_{B^{5/2-2\g}_{2,p_{\g}}}\leq \delta_0\big\},
\end{equation}
where we used $K_{p,\g}$ is the constant in the embedding $H^r\embed B^{5/2-2\g}_{2,p_\g}$ by definition.

Now, set $u_{T_0}\stackrel{{\rm def}}{=}\one_{\O_0} u(T_0) + \one_{\O\setminus \O_0} \int_{\T^3}u_0(x)\,\dd x$ and note that $\int_{\T^3} u_{T_0}(x)\,\dd x= \int_{\T^3} u_0(x)\,\dd x $ a.s. Hence, Theorem \ref{t:small_implies_global} applied with $s=T_0$ and initial data $u_{T_0}$ ensures that the stochastic HNSEs  \eqref{eq:hyper_NS_s} with $s=T_0$ and $u_s=u_{T_0}$ has a unique local $p$-solution $(\wh{u},\wh{\tau})$ such that 
$$
\P(\wh{\tau}=\infty)>1-\varepsilon/2.
$$ 
By maximality of the $p$-solution $(u,\tau)$ it follows that $\tau\geq \wh{\tau}$ a.s.\ on $\O_0$, cf.\ Definition \ref{def:p_solution}. Hence, combining the above with 
\eqref{eq:claim_T0_large_proof_main_result},
$$
\P(\tau=\infty)\geq \P\big(\O_0\cap \{\wh{\tau}=\infty\}\big)
\geq  \P(\wh{\tau}=\infty)- \P(\O\setminus\O_0) >1-\varepsilon.
$$
This proves Theorem \ref{t:main_theorem_L2} with the choice $\mu=\mu_0$ and $\theta=\theta_0$.
\end{proof}

\subsection{Global well-posedness and estimates for stochastic HNSEs with cutoff}
\label{ss:global_with_cutoff}
The aim of this subsection is to prove the following result on global well-posedness and uniform estimates w.r.t.\ normalized radially symmetric elements of $\theta\in\ell^2$ of solutions to \eqref{eq:hyper_NS_cut_off}. 

\begin{theorem}[Global well-posedness and $\theta$-uniform estimates -- Stochastic HNSEs with cutoff]
\label{t:HNSEs_cutoff_global}
Assume that $1<\g<\frac{5}{4}$ and $R>0$. Let $\theta\in \ell^2$ be normalized radially symmetric.
Suppose that $p\in (2,\infty)$ and
$r$ satisfy
\begin{equation}
\label{eq:sub_criticality_p_r}
p>\frac{4\g}{6\g-5}\qquad \text{ and }\qquad
\frac{5}{2}-2\g <r<\g\Big(1-\frac{2}{p}\Big).
\end{equation}
Then, for all $v_0\in \Bs^{\g(1-2/p)}_{2,p}(\T^3)$, the following assertions hold.
\begin{enumerate}[{\rm(1)}]
\item\label{it:HNSEs_cutoff_global_1} There exists a unique global $p$-solution to \eqref{eq:hyper_NS_cut_off} such that, a.s.\ for all $\vartheta\in [0,\frac{1}{2})$,
$$
v\in H^{\vartheta,p}_{\loc}([0,\infty);\Hs^{\g(1-2\vartheta)})\cap C([0,\infty);\Bs^{\g(1-2/p)}_{2,p}).
$$
\item\label{it:HNSEs_cutoff_global_2} For all $0<t<\infty$, there exists $C_{t,R}>0$ independent of $(v_0,\theta)$ such that
$$
\E\sup_{s\in [0,t]}\|v(s)\|_{B^{\g(1-2/p)}_{2,p}}^p
+\E\int_0^t \|v(s)\|_{H^\g}^p \,\dd s\leq C_{t,R}\big(1+\|v_0\|_{B^{\g(1-2/p)}_{2,p}}^p\big). 
$$
\end{enumerate}
\end{theorem}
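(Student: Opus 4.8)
The plan is to reduce \eqref{eq:hyper_NS_cut_off} to an abstract stochastic evolution equation on $X_0=\Hs^{-\g}(\T^3)$ with state space $X_1=\Hs^\g(\T^3)$, exactly as in \eqref{eq:SEE}--\eqref{eq:choice_ABF}, but now with the cut-off factor $\phi_{R,r}(v)$ multiplying the convective term. First I would record that the pair $(A,B)$ — with $A=(-\Delta)^\g+\LL^{\rm R}$ and $B$ the transport-noise operator — satisfies stochastic maximal $L^p$-regularity on $X_0$ with constants \emph{independent of} $\theta$ as long as $\theta$ is normalized and radially symmetric; this is precisely Theorem \ref{t:smr_theta} (cited in the excerpt and proved in Appendix \ref{app:smr}), and it is the engine behind the $\theta$-uniformity in part \eqref{it:HNSEs_cutoff_global_2}. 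Next, the cut-off nonlinearity $F_R(v)\stackrel{{\rm def}}{=}\phi_{R,r}(v)\,\p[\nabla\cdot(v\otimes v)]$ is \emph{globally} Lipschitz from $X_{\beta}$ (with $\beta=\frac54-\frac\g2$, cf.\ Lemma \ref{lem:nonlinearity_estimate}) into $X_0$, with Lipschitz constant $\lesssim_{R,\g}1$: indeed on $\{\|v\|_{H^r}\le 2R\}$ one bounds $\|v\otimes v\|$ quadratically, and for the cut-off difference one uses that $\phi$ is Lipschitz together with $H^\beta\embed H^r$ (which holds since $\beta\ge r$ under \eqref{eq:sub_criticality_p_r}, because $r<\g(1-2/p)<\g\le \beta$ fails in general — so here I would instead interpolate: $\phi_{R,r}$ depends only on the $H^r$-norm, $r<\g(1-2/p)\le\beta$ when $p$ is not too small, and in the remaining range one absorbs via $H^\g$-regularity of the path). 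With a globally Lipschitz nonlinearity, local existence from \cite[Theorem 4.8]{AV19_QSEE1} upgrades, by the standard continuation argument (the Lipschitz constant does not grow with the solution), to a \emph{global} unique $p$-solution with the path regularity in \eqref{it:HNSEs_cutoff_global_1}; the trace embedding \cite[Theorem 1.2]{ALV21} gives the $C([0,\infty);\Bs^{\g(1-2/p)}_{2,p})$ statement.

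For part \eqref{it:HNSEs_cutoff_global_2} I would run a Gronwall argument on the stochastic maximal regularity norm. Writing the mild/variational identity for $v$ on $[0,t]$ and applying the stochastic maximal $L^p$-regularity estimate of Theorem \ref{t:smr_theta} with $\theta$-independent constant $C_p$, one gets
\begin{align*}
\E\sup_{s\le t}\|v(s)\|_{B^{\g(1-2/p)}_{2,p}}^p+\E\int_0^t\|v\|_{H^\g}^p\,\dd s
\le C_p\Big(\|v_0\|_{B^{\g(1-2/p)}_{2,p}}^p+\E\int_0^t\|F_R(v(s))\|_{H^{-\g}}^p\,\dd s\Big).
\end{align*}
By the cut-off, $\|F_R(v(s))\|_{H^{-\g}}\le C_\g\,\phi_{R,r}(v(s))\|v(s)\|_{H^\beta}^2\le C_{\g,R}$ (bounded!), so the last term is $\le C_{\g,R}\,t$; this already yields the claimed bound with $C(t)=C_p(1+C_{\g,R}t)$. (One does not even need the energy balance here — the cut-off makes the drift $L^\infty$ in time — which is why the bound is clean and $\theta$-uniform.) The one point needing care is justifying the a priori finiteness of the left-hand side so that Gronwall/absorption is legitimate: this follows from part \eqref{it:HNSEs_cutoff_global_1} together with a localization in $n$ via stopping times $\tau_n=\inf\{t:\|v\|_{L^p(0,t;H^\g)}\ge n\}$, proving the estimate on $[0,\tau_n]$ with constants uniform in $n$ and letting $n\to\infty$ by Fatou.

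The main obstacle I anticipate is \emph{not} the global existence (the cut-off trivializes the nonlinear growth) but securing the \textbf{$\theta$-independence of the constants}. This rests entirely on Theorem \ref{t:smr_theta}: one must check that its hypotheses hold uniformly over normalized radially symmetric $\theta\in\ell^2$, in particular that the stochastic parabolicity / ellipticity constant of the leading pair $((-\Delta)^\g+\LL^{\rm R},\,B)$ does not degenerate — this is exactly where the identity \eqref{eq:energy_estimate_unchanged} (the dissipation of $\LL^{\rm R}$ balancing the Itô-correction energy of $B$) is used, and where the condition $\g>1$ enters to make $B$ lower-order relative to $(-\Delta)^\g$. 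Granting Theorem \ref{t:smr_theta} as stated, the rest of the proof is the routine fixed-point-plus-Gronwall scheme sketched above.
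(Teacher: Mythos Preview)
Your argument for part \eqref{it:HNSEs_cutoff_global_2} contains a genuine gap. The assertion
\[
\|F_R(v(s))\|_{H^{-\g}}\le C_\g\,\phi_{R,r}(v(s))\,\|v(s)\|_{H^\beta}^2\le C_{\g,R}
\]
is false in general: the cut-off $\phi_{R,r}$ only controls $\|v\|_{H^r}$, not $\|v\|_{H^\beta}$ with $\beta=\tfrac{5}{4}-\tfrac{\g}{2}$. Under the assumptions \eqref{eq:sub_criticality_p_r} one can have $r<\beta$ --- indeed this holds for \emph{every} admissible $r$ whenever $p\le \tfrac{8\g}{6\g-5}$, since then $\g(1-2/p)\le\beta$ --- so $\|v\|_{H^\beta}$ is unbounded on $\{\|v\|_{H^r}\le 2R\}$. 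You flagged this yourself when discussing part \eqref{it:HNSEs_cutoff_global_1} (``fails in general --- so here I would instead interpolate'') but then contradicted yourself by writing ``bounded!'' in part \eqref{it:HNSEs_cutoff_global_2}. The same issue undermines your claim that $F_R$ is globally Lipschitz from $H^\beta$ to $H^{-\g}$.

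The missing ingredient is the subcritical interpolation estimate of Lemma \ref{l:interpolation_subcriticality_r}: the condition $r>\tfrac{5}{2}-2\g$ yields $\kappa\in(0,1)$ and $\delta\in(r,\g)$ with
\[
\|\nabla\cdot(v\otimes v)\|_{H^{-\g}}\lesssim\|v\|_{H^r}^{1+\kappa}\|v\|_{H^\delta}^{1-\kappa}.
\]
On the support of $\phi_{R,r}$ this gives $\|F_R(v)\|_{H^{-\g}}\lesssim_R\|v\|_{H^\g}^{1-\kappa}$, i.e.\ \emph{sublinear} growth in the maximal-regularity norm rather than boundedness. Feeding this into Theorem \ref{t:smr_theta}\eqref{it:smr_theta_1} and applying Young's inequality (the exponent being $p(1-\kappa)<p$) lets you absorb the $H^\g$ contribution with a $\theta$-independent constant; the rest of your scheme (stopping times, Fatou, blow-up criterion $\Rightarrow\eta=\infty$) then goes through as in the paper. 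The same sublinear estimate, not a global Lipschitz property, is what verifies the hypotheses of \cite[Theorem 4.8]{AV19_QSEE1} for the second term in $F_R(v)-F_R(v')$ in part \eqref{it:HNSEs_cutoff_global_1}. The point is that the subcriticality $r>\tfrac{5}{2}-2\g$ is genuinely used here; your shortcut bypasses it and fails exactly in the regime where the theorem has content.
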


The proof of the above result is given at the end of this subsection.
Before going into the details, let us comment on the conditions in \eqref{eq:sub_criticality_p_r}.
As commented on below the statement of Theorem \ref{t:local_HNSEs_hyper_small_viscosity}, the first condition in \eqref{eq:sub_criticality_p_r} is equivalent to the subcriticality of $B^{\g(1-2/p)}_{2,p}$ for the HNSEs \eqref{eq:hyper_NS} and it is necessary for having a non-trivial choice of $r$. Moreover, as for the second condition in \eqref{eq:sub_criticality_p_r}, the restriction $r>\frac{5}{2}-2\g$  ensures that the space $H^r$ is smaller than the critical space $B^{5/2-2\g}_{2,p}$ for \eqref{eq:hyper_NS}, while $r<\g(1-\frac{2}{p})$ yields that $H^r$ is larger than the space of initial data for the $p$-solutions, i.e., $B^{\g(1-2/p)}_{2,p}$.
The latter fact was used in the proof of Theorem \ref{t:main_theorem_L2} in \eqref{eq:assumption_exponent_main_proof}.
In summary, the conditions in \eqref{eq:sub_criticality_p_r} give us the following embeddings:
\begin{equation}
\label{eq:embedding_Bg_cutoff}
B^{\g(1-2/p)}_{2,p}
\embed
H^r
\embed
B^{5/2-2\g}_{2,p}.
\end{equation}
The subcritical condition $r>\frac{5}{2}-2\g$ is exploited via the following result.

\begin{lemma}[Interpolation and subcriticality]
\label{l:interpolation_subcriticality_r}
Let $1<\g<\frac{5}{4}$ and $\frac{5}{2}-2\g<r<\g$. Then, there exist $\kappa=\kappa(r,\g)\in (0,1)$ and $\coeffone=\coeffone(r,\g)\in (r,\g)$ such that, for all $v\in H^{\g}(\T^3;\R^3)$,
$$
\|\nabla \cdot (v\otimes v )\|_{H^{-\g}}\lesssim 
\|v\|_{H^{r}}^{1+\kappa} \|v\|_{H^{\coeffone}}^{1-\kappa}.
$$
\end{lemma}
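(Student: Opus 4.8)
The plan is to start from the already-proved nonlinear estimate of Lemma \ref{lem:nonlinearity_estimate}, namely $\|\nabla\cdot(v\otimes v)\|_{H^{-\g}}\lesssim_\g \|v\|_{H^\beta}^2$ with $\beta=\frac54-\frac\g2$, and then split the exponent $\beta$ as a convex combination of $r$ and some $\delta\in(r,\g)$ via interpolation. First I would record that, since $\g>1$, we have $\beta=\frac54-\frac\g2<1<\g$, and since $r>\frac52-2\g$ we have $\beta-r<\frac54-\frac\g2-\frac52+2\g=\frac{3\g}{2}-\frac54$; I want $\beta>r$ so that $r$ lies strictly below $\beta$, which holds precisely because $r<\g$ forces $r<\beta$ is \emph{not} automatic — so the genuinely delicate bookkeeping is to check $r<\beta$, equivalently $r<\frac54-\frac\g2$. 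This does \emph{not} hold for all admissible $r$ (e.g. $r$ close to $\g$), so the decomposition must instead be arranged the other way: write $\beta$ between $r$ and $\delta$ only when $r\le\beta$, and otherwise — when $r>\beta$ — simply use $\|v\|_{H^\beta}\le \|v\|_{H^r}$ together with $\|v\|_{H^r}\lesssim\|v\|_{H^r}^{\kappa}\|v\|_{H^\delta}^{1-\kappa}$... which is false. Let me reorganize: the cleanest route is to \emph{always} interpolate $H^\beta$ between $H^r$ and $H^\delta$ with $\delta\in(r,\g)$ chosen large enough that $\beta\in[r,\delta]$; this is possible iff $\beta\le\g$, which is true, and $\beta\ge r$ need not hold, in which case we instead pick $\delta$ with $\beta\in(\delta',r)$ impossible since $\delta>r$.

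Given this subtlety, here is the approach I would actually carry out. Set $\delta=\delta(r,\g)\in(r,\g)$ to be chosen, and let $\kappa\in(0,1)$ be defined by $\beta=\frac{1+\kappa}{2}\,r+\frac{1-\kappa}{2}\,\delta$, i.e. we want to split the \emph{square} $\|v\|_{H^\beta}^2$ as $\|v\|_{H^\beta}\cdot\|v\|_{H^\beta}$ and interpolate each factor asymmetrically: $\|v\|_{H^\beta}^2\lesssim \big(\|v\|_{H^r}^{a}\|v\|_{H^\delta}^{1-a}\big)\big(\|v\|_{H^r}^{b}\|v\|_{H^\delta}^{1-b}\big)=\|v\|_{H^r}^{a+b}\|v\|_{H^\delta}^{2-a-b}$, where $\beta=a\,r+(1-a)\delta=b\,r+(1-b)\delta$ forces $a=b=\frac{\delta-\beta}{\delta-r}$, so $a+b=\frac{2(\delta-\beta)}{\delta-r}$. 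We need this to equal $1+\kappa$ with $\kappa>0$, i.e. $\frac{2(\delta-\beta)}{\delta-r}>1$, equivalently $2\delta-2\beta>\delta-r$, i.e. $\delta>2\beta-r$. Since $2\beta-r=\frac52-\g-r$ and we need $\delta<\g$, we must check $2\beta-r<\g$, i.e. $\frac52-\g-r<\g$, i.e. $r>\frac52-2\g$ — which is exactly the hypothesis. We also need $2\beta-r\le r$ is not required; we only need $\delta\in(\max\{r,\,2\beta-r\},\g)$ to be nonempty, i.e. $\max\{r,2\beta-r\}<\g$, which holds by $r<\g$ and the computation just done. Then $a+b\in(1,2)$ automatically gives $\kappa:=a+b-1\in(0,1)$, and $2-a-b=1-\kappa$, yielding $\|\nabla\cdot(v\otimes v)\|_{H^{-\g}}\lesssim\|v\|_{H^\beta}^2\lesssim\|v\|_{H^r}^{1+\kappa}\|v\|_{H^\delta}^{1-\kappa}$, which is the claim.

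The remaining routine steps are: (i) justify the interpolation inequality $\|v\|_{H^\beta}\le C\|v\|_{H^{s_0}}^{1-\lambda}\|v\|_{H^{s_1}}^{\lambda}$ for $\beta=(1-\lambda)s_0+\lambda s_1$, $s_0<s_1$, which is standard for Bessel potential spaces on $\T^3$ (it follows from $[H^{s_0},H^{s_1}]_\lambda=H^\beta$, see \cite{BeLo,Analysis1}); (ii) verify one admissible numerical choice, e.g. $\delta=\frac12\big(\max\{r,\tfrac52-\g-r\}+\g\big)$ and the corresponding $\kappa$, and check $\beta\in[r,\delta]$ or rather that $a=b\in(0,1)$ which holds iff $\beta\in(r,\delta)$ — here $\beta<\delta$ since $\delta>2\beta-r\ge\beta$ when $\beta\ge r$, while if $\beta<r$ then $a>1$ and the asymmetric split degenerates, so one truncates by replacing $r$ with $\min\{r,\beta\}$ in the lower slot, but since in that regime $\|v\|_{H^\beta}\le\|v\|_{H^r}$ trivially, one gets $\|v\|_{H^\beta}^2\le\|v\|_{H^r}^{1-\kappa}\|v\|_{H^r}^{1+\kappa}$ and then upgrades one factor $\|v\|_{H^r}\le\|v\|_{H^\delta}$ — wait, that goes the wrong way. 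The clean fix for the case $\beta<r$: there $H^r\hookrightarrow H^\beta$, so $\|v\|_{H^\beta}^2\le\|v\|_{H^\beta}^2$; interpolate $H^\beta$ between $H^{\beta}$ (trivially) — better, just note $\beta<r<\g$ means we can interpolate $H^r$ between $H^\beta$ and $H^\g$ but we want the reverse; instead observe that when $\beta<r$ we do not even need the lemma's full strength — but the lemma is stated for all $\frac52-2\g<r<\g$, so I must handle it. The resolution: when $\beta\le r$, pick any $\delta\in(r,\g)$, set $\kappa$ by $\beta=\frac{1+\kappa}2 \beta' + \cdots$; actually simplest is $\|v\|_{H^\beta}^2 \le \|v\|_{H^r}^2 = \|v\|_{H^r}^{1+\kappa}\|v\|_{H^r}^{1-\kappa}\le \|v\|_{H^r}^{1+\kappa}\|v\|_{H^\delta}^{1-\kappa}$ using $H^\delta\hookrightarrow H^r$ is \emph{backwards}; the correct trivial bound is $\|v\|_{H^r}\le\|v\|_{H^\delta}$ since $r<\delta$ — yes, that is the right direction. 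So $\|v\|_{H^\beta}^2\le\|v\|_{H^r}^2\le\|v\|_{H^r}^{1+\kappa}\|v\|_{H^r}^{1-\kappa}\le\|v\|_{H^r}^{1+\kappa}\|v\|_{H^\delta}^{1-\kappa}$ for any $\kappa\in(0,1)$. Thus the case $\beta\le r$ is immediate and the whole content is the case $\beta>r$, handled by the asymmetric interpolation above.

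\textbf{Main obstacle.} The only real obstacle is the exponent bookkeeping: ensuring the chosen $\delta$ stays strictly below $\g$ (so that the estimate involves only norms controlled by $H^\g$, as needed for the $p$-solution framework) while simultaneously producing a strictly \emph{superlinear} power $1+\kappa>1$ on the weaker norm $\|v\|_{H^r}$ — and tracking that the single inequality $r>\frac52-2\g$ is exactly what makes the feasible interval for $\delta$ nonempty. Everything else is a direct application of complex interpolation for Bessel potential spaces on the torus and of Lemma \ref{lem:nonlinearity_estimate}.
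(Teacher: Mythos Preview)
Your argument is correct and follows essentially the same route as the paper: both start from Lemma~\ref{lem:nonlinearity_estimate} with $\beta=\tfrac54-\tfrac\g2$, dispose of the trivial case $r\ge\beta$ by monotonicity of Sobolev norms, and in the case $r<\beta$ interpolate $H^\beta$ between $H^r$ and $H^\delta$ for $\delta$ close to $\g$, where the condition $r>\tfrac52-2\g$ is exactly what makes the exponent on $\|v\|_{H^r}$ exceed $1$. Your threshold $\delta>2\beta-r=\tfrac52-\g-r$ is equivalent to the paper's verification that $\kappa(r,\delta)=\frac{\delta+r-5/2+\g}{\delta-r}>0$; the exposition here is more streamlined than your draft, but the content is the same.
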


The subcriticality of $H^r$ for the HNSEs \eqref{eq:hyper_NS} is encoded in the sublinear growth of the RHS of the estimate in the above result with respect to the $H^{\g}$-norm.

\begin{proof}
Recall from Lemma \ref{lem:nonlinearity_estimate} that, for $\beta=\frac{5}{4}-\frac{\g}{2}$,
\begin{align*}
\|\nabla \cdot (v\otimes v )\|_{H^{-\g}}
\lesssim_\g \|v\|_{H^{\beta}}^2\ \  \text{ for all }\ v\in H^\beta.
\end{align*}
It is clear that $\beta>\frac{5}{2}-2\g$ and $\beta<\g$. Moreover, if $r\geq \beta$, then the claim of Lemma \ref{l:interpolation_subcriticality_r} holds for any $\kappa\in (0,1)$ and all $r< \coeffone<\g$. 
It remains to discuss the case $r<\beta$. 
Without loss of generality, we may assume $\coeffone>\beta$.
Thus, we can write $[H^r,H^{\coeffone}]_{\eta}=H^{\beta}$ with $\eta=(\beta-r)/(\coeffone-r)$ and therefore
$$
\|v\|_{H^{\beta}}^2\lesssim \|v\|_{H^r}^{2(1-\eta)}\|v\|_{H^{\coeffone}}^{2\eta}
$$
for all $v\in H^\g(\T^3;\R^3)$. Note that
$$
2\eta= 1-\kappa \quad \text{ with }\quad \kappa(r,\coeffone)= \frac{\coeffone+r-\frac{5}{2}+\g}{\coeffone-r}.
$$
In particular, $\lim_{\coeffone\uparrow \g} \kappa(r,\coeffone)= (r+2\g-\frac{5}{2})/(\g-r)$. 
The claim follows by noticing that $ (r+2\g-\frac{5}{2})/(\g-r)\in (0,1)$. Indeed, the positivity follows from the assumption $r<\beta$, and the remaining condition from subcriticality, i.e., $r>\frac{5}{2}-2\g$.
\end{proof}

\begin{proof}[Proof of Theorem \ref{t:HNSEs_cutoff_global}]
We split the proof into three steps.

\smallskip

\emph{Step 1: There exists a unique maximal $p$-solution $(v,\eta)$ to \eqref{eq:hyper_NS_cut_off}. Moreover, the following blow-up criterion holds:}
\begin{equation}
\label{eq:blow_up_criterion_v}
\P\Big(\eta<t,\, \sup_{s\in [0,\eta)}\|v(s)\|_{B^{\g(1-2/p)}_{2,p}}^p
+\int_0^\eta \|v(s)\|_{H^\g}^p\,\dd s <\infty\Big)=0
\ \ 
\text{\emph{for all} $t<\infty$}.
\end{equation}
In this step, similar to the proof of Theorem \ref{t:local_HNSEs_hyper_small_viscosity} in Subsection \ref{ss:proofs_local}, we employ the abstract results in \cite{AV19_QSEE1,AV19_QSEE2} (see also \cite[Step 1 of Lemma 4.3]{A24_anomalous} for a similar situation).
As in Theorem \ref{t:local_HNSEs_hyper_small_viscosity}, we regard \eqref{eq:hyper_NS_cut_off} as a stochatic evolution equation of the form \eqref{eq:SEE} on $X_0=\Hs^{-\g}$, with $(A,B)$ as in \eqref{eq:choice_ABF} and 
$$
F(v)=-\phi_{R,r}(v)\,\p[\nabla \cdot(v\otimes v)].
$$
Let $X_1=\Hs^\g$. 
We claim that, for all $v,v'\in X_1$,
\begin{align}
\label{eq:estimate_truncated_F_v_revision}
\|F(v)-F(v')\|_{H^{-\g}}&\lesssim_R (1+\|v\|_{H^{\beta}}+\|v'\|_{H^{\beta}})\|v-v'\|_{H^{\beta}}\\
\nonumber
&
\,+ (1+\|v\|_{H^{\coeffone}}+\|v'\|_{H^{\coeffone}})\|v-v'\|_{H^r}.
\end{align}
Let us first discuss how the above and the results in \cite{AV19_QSEE1,AV19_QSEE2} imply the claim of Step 1. Due to the first embedding in \eqref{eq:embedding_Bg_cutoff} and $\coeffone<\g$, the conditions of \cite[Theorem 4.8]{AV19_QSEE1} are satisfied. Hence, the existence of a unique local $p$-solution to \eqref{eq:hyper_NS_cut_off} follows from \cite[Theorem 4.8]{AV19_QSEE1}. Meanwhile, the blow-up criterion in \eqref{eq:blow_up_criterion_v} follows from the above considerations and \cite[Theorem 4.10(3)]{AV19_QSEE2} applied with trivial time weight.

In the remaining part of this step, we prove \eqref{eq:estimate_truncated_F_v_revision}. Below, $v,v'\in \Hs^\g$ are fixed. Without loss of generality, we assume $\|v\|_{H^r}\geq \|v'\|_{H^r}$. Note that 
$$
F(v)-F(v')= \phi_{R,r}(v)(\Bi(v)-\Bi(v'))+ (\phi_{R,r}(v)-\phi_{R,r}(v'))\, \Bi(v'),
$$
where $\Bi(v)\stackrel{{\rm def}}{=}-\,\p[\nabla\cdot (v\otimes v)]$.
Lemma \ref{lem:nonlinearity_estimate} and $\phi_{R,r}(v)\leq 1$ yield
\begin{align*}
\| F(v)-F(v')\|_{X_0}
&\leq \|\Bi(v)-\Bi(v')\|_{X_0}+ \| (\phi_{R,r}(v)-\phi_{R,r}(v'))\,\Bi(v')\|_{X_0}\\
&\lesssim(1+\|v\|_{H^{\beta}}+\|v'\|_{H^{\beta}})\|v-v'\|_{H^{\beta}}+ |\phi_{R,r}(v)-\phi_{R,r}(v')|\, \|\Bi(v')\|_{X_0}.
\end{align*}
Hence, to obtain \eqref{eq:estimate_truncated_F_v_revision}, it remains to estimate the last term on the right-hand side of the above inequality. As $\|v\|_{H^r}\geq \|v'\|_{H^r}$, if $\|v'\|_{H^r}\geq 2R$, then 
\begin{equation*}
\big|\phi_{R,r}(v)-\phi_{R,r}(v')\big|=0.
\end{equation*}
Let $\kappa>0$ and $\coeffone<\g$ be as in Lemma \ref{l:interpolation_subcriticality_r}. From the latter and above formula, we obtain
\begin{align*}
|\phi_{R,r}(v)-\phi_{R,r}(v')|\, \|\Bi(v')\|_{X_0}
& \lesssim 
|\phi_{R,r}(v)-\phi_{R,r}(v')|\, \|v'\|_{H^r}^{1+\kappa}\|v'\|_{H^{\coeffone}}^{1-\kappa}\\
&\lesssim_R
|\phi_{R,r}(v)-\phi_{R,r}(v')| \,\|v'\|_{H^{\coeffone}}^{1-\kappa}\\
&
\lesssim_R \|v-v'\|_{H^r}
(1+ \|v'\|_{H^{\coeffone}}),
\end{align*}
where in the last step we used \eqref{eq:definition_cutoff_phiRr}, and that $\phi$ is smooth with compact support.
The claimed estimate \eqref{eq:estimate_truncated_F_v_revision} now follows from collecting the above inequalities.

\smallskip

\emph{Step 2: For all $t>0$, there exists $C_{t,R}>0$ independent of $(v_0,\theta)$ such that}
$$
\E\Big[\sup_{s\in [0,t\wedge \eta)}\|v(s)\|_{B^{\g(1-2/p)}_{2,p}}^p\Big]
+\E\int_0^{t\wedge \eta}\|v(s)\|_{H^\g}^p \,\dd s\leq C_{t,R}\|v_0\|_{B^{\g(1-2/p)}_{2,p}}^p. 
$$
Let us fix $t<\infty$.
For all $j\geq 1$, let $\eta_j$ be the stopping time defined as
$$
\eta_j \stackrel{{\rm def}}{=}
\inf\Big\{r\in [0,\eta) \,:\,  \sup_{s\in [0,r)}\|v(s)\|_{B^{\g(1-2/p)}_{2,p}}^p
+\int_0^r \|v(s)\|_{H^\g}^p\,\dd s\geq j\Big\}\wedge t
$$
where we set $\inf\emptyset \stackrel{{\rm def}}{=}\infty$. 

By Theorem \ref{t:smr_theta}\eqref{it:smr_theta_1}, there exists a constant $C_0(t)>0$ independent of $(v_0,\theta)$ such that, for all $j\geq 1$,
\begin{align*}
&\E\Big[\sup_{s\in [0,\eta_j)}\|v(s)\|_{B^{\g(1-2/p)}_{2,p}}^p\Big]
+\E\int_0^{ \eta_j}\|v(s)\|_{H^\g}^p \,\dd s\\
&\qquad\leq C_0(t)\Big(\|v_0\|_{B^{\g(1-2/p)}_{2,p}}^p
+ \E\int_0^{ \eta_j}\phi_{R,r}(v)\,\|\p[\nabla \cdot (v(s)\otimes v(s))]\|_{H^{-\g}}^p \,\dd s\Big)\\
&\qquad\stackrel{(i)}{\leq} C_0(t)\Big(\|v_0\|_{B^{\g(1-2/p)}_{2,p}}^p
+ (2R)^{p(1+\kappa)}\E\int_0^{ \eta_j}\|v(s)\|_{H^{\g}}^{p(1-\kappa)} \,\dd s\Big)\\
&\qquad\stackrel{(ii)}{\leq} C_0(t)\big(\|v_0\|_{B^{\g(1-2/p)}_{2,p}}^p
+ C_{R,\kappa}\big)+\frac{1}{2} \,\E\int_0^{ \eta_j}\|v(s)\|_{H^{\g}}^{p} \,\dd s
\end{align*}
where in $(i)$ we used Lemma \ref{l:interpolation_subcriticality_r} as well as $\phi_{R,r}(v)=0$ provided $\|v\|_{H^r}\geq 2R$, and in $(ii)$ the Young inequality. Since $\kappa=\kappa(r,\g)>0$ by Lemma \ref{l:interpolation_subcriticality_r}, in the above $C_{R,\kappa}=C_{R,\g,r}$ is a costant depending only on $R,r$ and $\g$.

Noticing that $\E\int_0^{ \eta_j}\|v(s)\|_{H^\g}^p \,\dd s\leq j<\infty$ by definition of $\eta_j$, the above implies
$$
\E\Big[\sup_{s\in [0,\eta_j)}\|v(s)\|_{B^{\g(1-2/p)}_{2,p}}^p\Big]
+\E\int_0^{ \eta_j}\|v(s)\|_{H^\g}^p \,\dd s\leq C_{t,R}\big(1+\|v_0\|_{B^{\g(1-2/p)}_{2,p}}^p\big)
$$
where $C_{t,R}=2C_{R,r,\g}C_0(t)$ is independent of $(v_0,\theta)$. Since $\lim_{j\to \infty}\eta_j =\eta\wedge t$ a.s.\ by construction, the claim of Step 2 follows by taking the limit as $j\to \infty$ and using the Fatou lemma in the above estimate.

\smallskip

\emph{Step 3: Conclusion -- $\eta=\infty$ a.s.} By Step 2, we have that, for all $t<\infty$, 
$$
\sup_{s\in [0,t\wedge \eta)}\|v(s)\|_{B^{\g(1-2/p)}_{2,p}}^p+ 
\int_0^{t\wedge \eta}\|v(s)\|_{H^\g}^p \,\dd s<\infty\  \ \text{ a.s.\ }
$$
Thus, for all $t<\infty$,
$$
\P(\eta<t)=\P\Big(\eta<t,\, \sup_{s\in [0,\eta)}\|v(s)\|^p_{B^{\g(1-2/p)}_{2,p}}
+\int_0^\eta \|v(s)\|_{H^\g}^p\,\dd s <\infty\Big)=0
$$
where in the last step we used the blow-up criterion of Step 1. Hence, the arbitrariness of $t<\infty$ yields $\eta=\infty$ a.s. Thus, the assertions \eqref{it:HNSEs_cutoff_global_1} and \eqref{it:HNSEs_cutoff_global_2} in Theorem \ref{t:HNSEs_cutoff_global} 
follow from the latter and Steps 1 and 2, respectively.
\end{proof}

\subsection{Uniqueness of weak solutions of HNSEs with viscosity and cutoff}
\label{ss:uniqueness_weak_solutions}
In this subsection, we analyze the regularity of certain weak solutions to \eqref{eq:hyper_NS_cut_off_det}
arising in the scaling limit behind Theorem \ref{t:scaling_limit_HNSE}.

\begin{definition}[Weak solution of HNSEs with viscosity and cutoff]
\label{def:HNSE_cutoff}
Fix $R>0$ and $r\in (0,\g)$. Let $v_0\in \Hs^r(\T^3)$ and 
$$
\vd \in C([0,T];\Hs^r(\T^3))\cap L^2(0,T;\Hs^\g(\T^3)) 
$$
for some $T<\infty$. We say that $\vd$ is a weak solution to \eqref{eq:hyper_NS_cut_off_det} if for all divergence-free vector field $\varphi\in C^{\infty}(\T^3;\R^3)$ and $t\in (0,T)$ we have
\begin{align*}
\langle \varphi , \vd(t)\rangle 
=
\langle \varphi , v_0\rangle 
&+\int_0^t\int_{\T^3}\Big[\,
\frac{3\mu}{5}\, \vd\Delta \varphi-  \vd(-\Delta)^{\g}\varphi\Big] \,\dd x \,\dd s\\
&+\int_0^t\phi_{R,r}(\vd)\,\int_{\T^3}
\big[  (\vd\otimes \vd) : \nabla \varphi\big]\,\dd x \,\dd s.
\end{align*}
\end{definition}

The aim of this subsection is to prove the following result.

\begin{proposition}[Global well-posedness and weak-strong uniqueness for HNSEs with viscosity and cutoff]
\label{prop:regularity_weak_sol_cut_off}
Let $1<\g<\frac{5}{4}$ and $\frac{5}{2}-2\g<r<\g$. Fix $p\in (2,\infty)$ such that 
$
r<\g(1-2/p)
$
and assume
\begin{equation}
\label{eq:v_0_Besov_high_regularity_vd}
v_0\in \Bs^{\g(1-2/p)}_{2,p}.
\end{equation}
Then there exists a unique global $p$-solution $\vd$ to \eqref{eq:hyper_NS_cut_off_det}. Moreover, for all weak solutions $w_{{\rm det}}$ to \eqref{eq:hyper_NS_cut_off_det} on some time interval $[0,T]$, we have $\vd =w_{{\rm det}}$ on $[0,T]$. 
\end{proposition}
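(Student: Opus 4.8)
The plan is to obtain existence and uniqueness of a global $p$-solution by viewing \eqref{eq:hyper_NS_cut_off_det} as the deterministic analogue of \eqref{eq:hyper_NS_cut_off} and reusing the machinery already assembled: the critical-spaces well-posedness theory of \cite{AV19_QSEE1,AV19_QSEE2} (equivalently, in the deterministic setting, \cite{CriticalQuasilinear}, \cite[Chapter~18]{Analysis3}), the nonlinear estimates of Lemma~\ref{lem:nonlinearity_estimate} and Lemma~\ref{l:interpolation_subcriticality_r}, and the a-priori bounds of Theorem~\ref{t:HNSEs_cutoff_global} with $\mu$ there playing the role of $\tfrac{3\mu}{5}$ and $\theta\equiv 0$. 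Concretely, I would first record that \eqref{eq:hyper_NS_cut_off_det} is an abstract evolution equation $\partial_t \vd + A\vd = F(\vd)$ on the pair $(\Hsd^{-\g},\Hsd^\g)$ with $A=(-\Delta)^\g - \tfrac{3\mu}{5}\Delta$ and $F(\vd)=\phi_{R,r}(\vd)\,\p[\nabla\cdot(\vd\otimes \vd)]$; the mean is conserved so one works on the mean-zero spaces as in Section~\ref{s:global_high_viscosity}. By \eqref{eq:v_0_Besov_high_regularity_vd} and the condition $r<\g(1-2/p)$ we have $v_0\in\Bsd^{\g(1-2/p)}_{2,p}\embed \Hsd^r$. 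Splitting $F(\vd)-F(w)=\phi_{R,r}(\vd)(B(\vd)-B(w)) + [\phi_{R,r}(\vd)-\phi_{R,r}(w)]B(w)$ with $B(v)=\p[\nabla\cdot(v\otimes v)]$ and using Lemmas~\ref{lem:nonlinearity_estimate} and~\ref{l:interpolation_subcriticality_r} exactly as in Step~1 of the proof of Theorem~\ref{t:HNSEs_cutoff_global}, the structural hypotheses of \cite[Theorem~4.8]{AV19_QSEE1} (or their deterministic counterpart) are verified, yielding a unique maximal $p$-solution $(\vd,\eta)$ with
$$
\vd\in W^{1,p}_{\loc}([0,\eta);\Hsd^{-\g})\cap L^p_{\loc}([0,\eta);\Hsd^{\g})\subseteq C([0,\eta);\Bsd^{\g(1-2/p)}_{2,p}),
$$
together with the serrin-type blow-up criterion as in \eqref{eq:blow_up_criterion_v}.

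Globality $\eta=\infty$ then follows by the same a-priori argument as in Step~2 of the proof of Theorem~\ref{t:HNSEs_cutoff_global}: apply maximal $L^p$-regularity (Lemma~\ref{lem:max_reg_Lp}, which already gives $\mu$-independent constants, here with fixed $\mu=\tfrac{3\mu}{5}$), bound the nonlinearity via $\|F(\vd)\|_{H^{-\g}}\le \one_{\{\|\vd\|_{H^r}\le 2R\}}\|B(\vd)\|_{H^{-\g}}\lesssim_R \|\vd\|_{H^\g}^{1-\kappa}$ using Lemma~\ref{l:interpolation_subcriticality_r}, and absorb the sublinear $L^p$-in-time term by Young's inequality to close an a-priori bound on $\sup_{[0,t]}\|\vd\|_{\Bsd^{\g(1-2/p)}_{2,p}} + \|\vd\|_{L^p(0,t;\Hsd^\g)}$ on every finite interval; combined with the blow-up criterion this forces $\eta=\infty$. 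This is entirely deterministic and somewhat lighter than the stochastic version since no stopping times or expectations are needed.

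The remaining, and genuinely new, part is the weak--strong uniqueness: given any weak solution $w_{{\rm det}}\in C([0,T];\Hs^r)\cap L^2(0,T;\Hs^\g)$ in the sense of Definition~\ref{def:HNSE_cutoff}, show $w_{{\rm det}}=\vd$ on $[0,T]$. I would set $e=\inf\{t:\ \|w_{{\rm det}}(t)\|_{H^r}\ge 2R\}\wedge T$ (so that $\phi_{R,r}(w_{{\rm det}})$ matters only where $\|w_{{\rm det}}\|_{H^r}<2R$), form the difference $z=\vd-w_{{\rm det}}$, and test the (difference of the) weak formulations against $z$ — this is licit because $\vd\in L^2(0,T;\Hs^\g)$ has an $\Hs^{-\g}$-valued time derivative and $w_{{\rm det}}\in L^2(0,T;\Hs^\g)$, so the duality pairing $\langle \partial_t z, z\rangle$ makes sense and $t\mapsto \|z(t)\|_{L^2}^2$ is absolutely continuous with the usual integration-by-parts formula. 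The linear terms give $-\|(-\Delta)^{\g/2}z\|_{L^2}^2 - \tfrac{3\mu}{5}\|\nabla z\|_{L^2}^2$, which is good; the delicate contribution is the difference of the cut-off convective terms. Writing $\phi_{R,r}(\vd)\p[\nabla\cdot(\vd\otimes\vd)] - \phi_{R,r}(w_{{\rm det}})\p[\nabla\cdot(w_{{\rm det}}\otimes w_{{\rm det}})]$ as $\phi_{R,r}(\vd)(\p[\nabla\cdot(\vd\otimes\vd)] - \p[\nabla\cdot(w_{{\rm det}}\otimes w_{{\rm det}})]) + (\phi_{R,r}(\vd)-\phi_{R,r}(w_{{\rm det}}))\p[\nabla\cdot(w_{{\rm det}}\otimes w_{{\rm det}})]$, one uses the algebraic identity $\vd\otimes\vd - w_{{\rm det}}\otimes w_{{\rm det}} = z\otimes \vd + w_{{\rm det}}\otimes z$ and the Lipschitz bound $|\phi_{R,r}(\vd)-\phi_{R,r}(w_{{\rm det}})|\lesssim_R \|z\|_{H^r}$ from Lemma~\ref{l:interpolation_subcriticality_r}-type estimates; on $[0,e]$ both $\vd$ and $w_{{\rm det}}$ are bounded in $\Hs^\g$ in $L^2$-in-time and bounded in $\Hs^r$, so after pairing with $z$ and distributing the $\Hs^\g$-regularity (using $\langle z\otimes\vd:\nabla z\rangle$ type terms, $H^{-\g}$--$H^\g$ duality, and the interpolation inequality of Lemma~\ref{l:interpolation_subcriticality_r} to trade excess derivatives for the dissipation $\|(-\Delta)^{\g/2}z\|_{L^2}^2$) one arrives at
$$
\tfrac{\dd}{\dd t}\|z(t)\|_{L^2}^2 + \|(-\Delta)^{\g/2}z(t)\|_{L^2}^2 \le C(R)\big(1+\|\vd(t)\|_{H^\g}^2 + \|w_{{\rm det}}(t)\|_{H^\g}^2\big)\|z(t)\|_{L^2}^2
$$
on $[0,e]$, whence $z\equiv 0$ on $[0,e]$ by Grönwall (the coefficient is $L^1$ in time). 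Finally, since $\vd$ is bounded in $\Hs^r$ uniformly — indeed $\sup_t\|\vd(t)\|_{H^r}$ can be kept below $2R$ for the relevant choice (or, more robustly, $z\equiv 0$ on $[0,e]$ forces $\|w_{{\rm det}}(e)\|_{H^r}=\|\vd(e)\|_{H^r}<2R$ unless $e=T$), a standard continuation argument pushes $e$ up to $T$, giving $w_{{\rm det}}=\vd$ on all of $[0,T]$. The main obstacle is precisely this last energy estimate for the difference: one must verify that the cut-off cross-term $(\phi_{R,r}(\vd)-\phi_{R,r}(w_{{\rm det}}))\,\p[\nabla\cdot(w_{{\rm det}}\otimes w_{{\rm det}})]$ paired with $z$ is controllable — this needs $\|\p[\nabla\cdot(w_{{\rm det}}\otimes w_{{\rm det}})]\|_{H^{-\g}}\lesssim \|w_{{\rm det}}\|_{H^\g}\|w_{{\rm det}}\|_{H^r}$-type bounds and the subcriticality $r>\tfrac52-2\g$ to ensure the exponents close, together with care that $z$ only enjoys $L^2$-in-time $\Hs^\g$ regularity, so one cannot afford to put more than one $(-\Delta)^{\g/2}$ on $z$ and must lean on Young's inequality to absorb it into the dissipation.
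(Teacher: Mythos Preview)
Your treatment of the existence, uniqueness and globality of the $p$-solution $\vd$ matches the paper exactly: both invoke the machinery of Theorem~\ref{t:HNSEs_cutoff_global} (deterministically, via Lemmas~\ref{lem:nonlinearity_estimate}, \ref{lem:max_reg_Lp}, \ref{l:interpolation_subcriticality_r} and the blow-up criterion).

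For the weak--strong uniqueness, however, the paper takes a different and shorter route. Rather than an $L^2$-energy estimate on $z=\vd-\wwd$, the paper first \emph{upgrades the regularity of any weak solution} (Lemma~\ref{l:regularity_weak_sol_cut_off}): since $\phi_{R,r}(\wwd)\|\nabla\cdot(\wwd\otimes\wwd)\|_{H^{-\g}}\lesssim_R \|\wwd\|_{H^\g}^{1-\kappa}$ by Lemma~\ref{l:interpolation_subcriticality_r}, one bootstraps $\wwd\in L^{p_{j-1}}(0,T;H^\g)\Rightarrow \wwd\in L^{p_j}(0,T;H^\g)$ with $p_j=p\wedge 2(1-\kappa)^{-j}$ via maximal $L^{p_j}$-regularity (Lemma~\ref{lem:max_reg_Lp}). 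After finitely many steps $\wwd$ lies in the $p$-solution class, and uniqueness of $p$-solutions finishes the proof.

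Your energy argument, as written, has a genuine gap. With your decomposition the cut-off cross-term is $(\phi_{R,r}(\vd)-\phi_{R,r}(\wwd))\langle B(\wwd),z\rangle$, and after the Lipschitz bound, $H^{-\g}$--$H^\g$ duality, interpolation $\|z\|_{H^r}\lesssim\|z\|_{L^2}^{1-r/\g}\|z\|_{H^\g}^{r/\g}$, and Young's inequality, the Gr\"onwall coefficient contains $\|\wwd\|_{H^\g}^{4\g(\beta-r)/(\g-r)^2}$ with $\beta=\tfrac54-\tfrac\g2$. This exponent exceeds $2$ precisely when $r^2<\g(\tfrac52-2\g)$, which occurs for $r$ close to the critical value $\tfrac52-2\g$ (e.g.\ $\g=1.1$, $r=0.35$). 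Since $\wwd$ is only known to lie in $L^2(0,T;H^\g)$, the coefficient is then not in $L^1$ and Gr\"onwall fails. Your displayed inequality with coefficient $(1+\|\vd\|_{H^\g}^2+\|\wwd\|_{H^\g}^2)$ is therefore not attainable for all admissible $r$. The hitting time $e$ and continuation step do not help: the cut-off is already part of the equation, and there is no reason $\|\vd(e)\|_{H^r}<2R$.

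The fix, if you wish to pursue the energy route, is to use the \emph{other} decomposition, placing $B(\vd)$ with the cut-off difference; then the Gr\"onwall coefficient involves powers of $\|\vd\|_{H^\g}$ bounded by $\max\big(\tfrac{4\g}{6\g-5},\tfrac{2\g}{\g-r}\big)<p$, which is $L^1$ since $\vd\in L^p(0,T;H^\g)$. The paper's bootstrap avoids this bookkeeping entirely.
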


The proof of Proposition \ref{prop:regularity_weak_sol_cut_off} is given at the end of this subsection and relies on the following result, which shows that weak solutions of \eqref{eq:hyper_NS_cut_off_det} are actually in the regularity class of $p$-solutions. 

\begin{lemma}[Regularity of weak solutions of HNSEs with transport noise and cutoff]
\label{l:regularity_weak_sol_cut_off}
Let the assumptions of Proposition \ref{prop:regularity_weak_sol_cut_off} be satisfied. 
Let $\wwd$ be a weak solution to \eqref{eq:hyper_NS_cut_off_det} on $[0,T]$ for some $T<\infty$. Then
$$
\wwd \in W^{1,p}(0,T;\Hs^{-\g})\cap L^{p}(0,T;\Hs^{\g})\subseteq C([0,T];\Bs^{\g(1-2/p)}_{2,p}).
$$
\end{lemma}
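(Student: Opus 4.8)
The plan is to run a bootstrap argument: starting from the low regularity guaranteed by the definition of weak solution, I will upgrade $\wwd$ step by step until it lands in the maximal $L^p$-regularity class, using the nonlinear estimate of Lemma \ref{l:interpolation_subcriticality_r} together with the maximal $L^p$-regularity for the autonomous hyperviscous-plus-Laplacian operator $A_\mu = (-\Delta)^\g - \mu\Delta$ restricted to mean-zero data, exactly as in Lemma \ref{lem:max_reg_Lp} (with $\mu = \tfrac{3\mu}{5}$ there, after noting that the term $\tfrac{3\mu}{5}\Delta$ in \eqref{eq:hyper_NS_cut_off_det} is lower order and can be absorbed into the principal part, or handled as a bounded perturbation). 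First I would observe that by testing \eqref{eq:hyper_NS_cut_off_det} against divergence-free $\varphi$ and using the Helmholtz projection, $\wwd$ is a (distributional) solution on $[0,T]$ of
\begin{equation*}
\partial_t \wwd + (-\Delta)^\g \wwd - \tfrac{3\mu}{5}\Delta \wwd = -\phi_{R,r}(\wwd)\,\p[\nabla\cdot(\wwd\otimes\wwd)], \qquad \wwd(0)=v_0,
\end{equation*}
and that $\wwd$ is automatically mean-zero for $t>0$ since $v_0$ is (this uses $v_0\in \Bs^{\g(1-2/p)}_{2,p}$, which has zero average by the definition of the divergence-free Besov space $\Bs$, together with the fact that $\p[\nabla\cdot(\cdot)]$ and $\Delta$, $(-\Delta)^\g$ all preserve mean zero).

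Next I would estimate the right-hand side. Since $\wwd\in C([0,T];\Hs^r)\cap L^2(0,T;\Hs^\g)$ with $r>\tfrac52-2\g$, Lemma \ref{l:interpolation_subcriticality_r} gives, for a.e.\ $t$,
\begin{equation*}
\|\phi_{R,r}(\wwd(t))\,\p[\nabla\cdot(\wwd(t)\otimes\wwd(t))]\|_{H^{-\g}} \lesssim_R \|\wwd(t)\|_{H^r}^{1+\kappa}\,\|\wwd(t)\|_{H^\delta}^{1-\kappa} \lesssim_{R,T} \|\wwd(t)\|_{H^\delta}^{1-\kappa},
\end{equation*}
using $\sup_{[0,T]}\|\wwd\|_{H^r}<\infty$, where $\delta\in(r,\g)$ and $\kappa\in(0,1)$. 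Interpolating $H^\delta$ between $H^r$ (bounded in $L^\infty_t$) and $H^\g$ (bounded in $L^2_t$) shows $\wwd\in L^{q_1}(0,T;H^\delta)$ for some $q_1>2$, hence the nonlinearity lies in $L^{q_1/(1-\kappa)}(0,T;\Hs^{-\g})$ with $q_1/(1-\kappa)>2$. Feeding this into the maximal $L^{q_1/(1-\kappa)}$-regularity estimate (the mean-zero version, so that $A_\mu$ is invertible and the estimate applies on $\Hsd$) together with $v_0\in \Bs^{\g(1-2/p)}_{2,p}\embed \Bs^{\delta'(1-2/q)}_{2,q}$ for the relevant $q$, I obtain $\wwd\in L^{q_2}(0,T;\Hs^\g)$ with $q_2 = q_1/(1-\kappa) > q_1$. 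Iterating — each round multiplies the integrability exponent by $1/(1-\kappa)>1$ — the exponent increases geometrically and after finitely many steps exceeds $p$; at that stage the nonlinearity lies in $L^p(0,T;\Hs^{-\g})$ (here one also checks $v_0\in\Bs^{\g(1-2/p)}_{2,p}$ is exactly the trace space needed), and the maximal $L^p$-regularity of $A_\mu$ (Lemma \ref{lem:max_reg_Lp}, suitably adapted) yields $\wwd\in W^{1,p}(0,T;\Hs^{-\g})\cap L^p(0,T;\Hs^\g)$. The embedding into $C([0,T];\Bs^{\g(1-2/p)}_{2,p})$ is the trace embedding already quoted in Lemma \ref{lem:max_reg_Lp}, via \cite[Theorem 1.2]{ALV21}.

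The main obstacle is the bookkeeping in the bootstrap: one must check at each step that the time-integrability exponent produced by interpolation is genuinely larger than the previous one (this is where $\kappa>0$, i.e.\ the strict subcriticality $r>\tfrac52-2\g$, is essential) and that the initial datum $v_0$ sits in the correct trace space $\Bs^{\sigma(1-2/q)}_{2,q}$ at every intermediate integrability $q$ — this follows from $v_0\in\Bs^{\g(1-2/p)}_{2,p}$ and Sobolev/Besov embeddings since we only ever decrease the smoothness exponent below $\g(1-2/p)$ while the loop runs, but it should be stated carefully. A secondary, purely technical point is justifying that the very first gain (from the mere weak-solution class to some $L^{q_1}(H^\delta)$ with $q_1>2$) is legitimate: one needs $\wwd$ to be an $L^{q_1/(1-\kappa)}$-maximal-regularity solution rather than just a distributional one, which follows from uniqueness of the linear Cauchy problem \eqref{eq:Amu_cauchy_problem} in the class $L^1_{\mathrm{loc}}$ once the right-hand side is known to be in $L^{q_1/(1-\kappa)}(0,T;\Hs^{-\g})$ — i.e.\ the weak solution coincides with the mild/strong solution of the linear problem with that fixed forcing. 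Everything else is routine.
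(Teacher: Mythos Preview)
Your proposal is correct and follows essentially the same bootstrap as the paper: iterate maximal $L^p$-regularity for the linear operator $(-\Delta)^\gamma - \tfrac{3\mu}{5}\Delta$ against the nonlinear forcing, using Lemma~\ref{l:interpolation_subcriticality_r} to gain a factor $1/(1-\kappa)$ in time-integrability at each step, starting from $\wwd\in L^2(0,T;H^\g)$ and stopping once the exponent reaches $p$. The only cosmetic differences are that the paper bounds $\|\wwd\|_{H^r}$ via the support of the cutoff $\phi_{R,r}$ (which vanishes when $\|\wwd\|_{H^r}\geq 2R$) rather than via $\wwd\in C([0,T];H^r)$, and works directly with $\|\wwd\|_{H^\g}^{1-\kappa}$ (using $\delta<\g$) instead of your intermediate interpolation through $H^\delta$.
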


\begin{proof}
Let us begin by recalling that, from Lemma \ref{l:interpolation_subcriticality_r}, there exists $\kappa\in (0,1)$ such that, for all $v\in H^\g$,
\begin{equation}
\label{eq:v_estimate_proof_subcritical_improved_regularity}
\|\nabla \cdot (v\otimes v )\|_{H^{-\g}}\lesssim 
\|v\|_{H^{r}}^{1+\kappa} \|v\|_{H^{\g}}^{1-\kappa}.
\end{equation}
Since $\vd\in L^2(0,T;H^\g)$ by Definition \ref{def:HNSE_cutoff}, to prove the claim of Proposition \ref{prop:regularity_weak_sol_cut_off}, it is enough to show that, for $j\geq 1$,
\begin{equation}
\label{eq:implication_regularity_weak_solutions_det_1}
\vd\in L^{p_{j-1}}(0,T;H^{\g})\ 
\Longrightarrow\ 
\vd \in W^{1,p_j}(0,T;H^{-\g})\cap L^{p_j}(0,T;H^{\g})
\end{equation}
where 
\begin{equation}
\label{eq:def_pj_iteration}
\textstyle
p_j= p\wedge  \big( \frac{2}{(1-\kappa)^j}\big).
\end{equation}
To prove \eqref{eq:implication_regularity_weak_solutions_det_1}, we employ (deterministic) maximal $L^p$-regularity for the operator $-(-\Delta)^{\g}$ on $\Hs^{-\g}$ with forcing term (see \cite[Subsection 3.3.5]{pruss2016moving} for the terminology) 
$$
f=\phi_{R,r}(v)\,\p [\nabla\cdot (v\otimes v)].
$$
Note that maximal $L^p$-regularity estimates for $-(-\Delta)^{\g}$ on $\Hs^{-\g}$ corresponds to the case $\xi_n\equiv 0$ and $g_n\equiv 0$ in Theorem \ref{t:smr_theta}. 

Assume 
$
\vd\in L^{p_{j_0-1}}(0,T;H^{\g})$ for some $j_0\geq 1$.
By \eqref{eq:v_estimate_proof_subcritical_improved_regularity}, 
for all $v\in H^\g$,
\begin{align*}
\phi_{R,r}(v)\,\| \nabla \cdot (v\otimes v)\|_{H^{-\g}}
\lesssim 
\phi_{R,r}(v)\,\|v\|_{H^r}^{1+\kappa}\|v\|_{H^\g}^{1-\kappa}
\lesssim_R \|v\|_{H^\g}^{1-\kappa},
\end{align*} 
where we used that $\phi_{R,r}(v)=0$ if $\|v\|_{H^r}\geq 2R$.
Note also that $u_0\in \Bs^{\g(1-2/p_{j_0-1})}_{2,p_{j_0-1}}$ due to \eqref{eq:v_0_Besov_high_regularity_vd} and \eqref{eq:def_pj_iteration}. 
Hence, from the above and the inductive assumption,  
$$
\phi_{R,r}(v)\,\p [\nabla \cdot (v\otimes v)]
\in L^{p_{j_0}}(0,T;H^{-\g}).
$$
Using the above and \eqref{eq:v_0_Besov_high_regularity_vd} in combination with the maximal $L^p$-regularity for $-(-\Delta)^{\g}$ on $\Hs^{-\g}$ as in Theorem \ref{t:smr_theta} with $\xi_n\equiv 0$ and $g_n\equiv 0$, we obtain $\vd \in W^{1,p_{j_0}}(0,T;H^{-\g})\cap L^{p_{j_0}}(0,T;H^{\g})$ as desired.
\end{proof}

\begin{proof}[Proof of Proposition \ref{prop:regularity_weak_sol_cut_off}]
The existence of a unique global $p$-solution $\vd$ to \eqref{eq:hyper_NS_cut_off_det} follows as in the proof of Theorem \ref{t:HNSEs_cutoff_global}\eqref{it:HNSEs_cutoff_global_1} and therefore we omit the details. Now, by Lemma \ref{l:regularity_weak_sol_cut_off}, weak solutions to \eqref{eq:hyper_NS_cut_off_det} are actually local $p$-solutions to \eqref{eq:hyper_NS_cut_off_det}. 
Thus, the claim follows from the uniqueness of the $p$-solution $\vd$.
\end{proof}

Before going further, let us point out that Lemma \ref{l:regularity_weak_sol_cut_off} still holds if we relax the regularity assumptions in Definition \ref{def:HNSE_cutoff} to $v_0\in \Ls^q(\T^3)$ and $\vd\in L^\infty(0,T;\Ls^{q}(\T^3))$ for some $q>\frac{3}{2\g-1}$. The latter value is the critical integrability for the HNSEs and can be obtained via a scaling argument as in \eqref{eq:scaling_HNSEs}-\eqref{eq:u0_u0lambda} of Subsection \ref{ss:novelty}. The possibility of reducing the regularity assumptions to the above-mentioned ones is consistent with Serrin's type criteria (see e.g., \cite[Proposition 12.2 and Theorem 12.4]{LePi}) and the subcriticality of $\Ls^{q}(\T^3)$ for the HNSEs. As in Remark \ref{r:generalization}, for the sake of clarity, we do not pursue this level of generality here. 

\subsection{Scaling limit -- Proof of Theorem \ref{t:scaling_limit_HNSE}}
\label{ss:proof_scaling_limit_HNSE}
The key ingredient in the proof of Theorem \ref{t:scaling_limit_HNSE} is the following result.

\begin{proposition}
\label{prop:scaling_limit_HNSE_reduction}
Let $1<\g<\frac{5}{4}$, $\frac{4\g}{6\g-5}<p<\infty$, $R>0$ and $\frac{5}{2}-2\g< r < \g(1-\frac{2}{p})$. 
Let $(\theta^n)_{n\geq 1}$ be as in \eqref{eq:choice_theta_n}. 
Assume that $
(v_0^n)_{n\geq 1}\subseteq \Bs^{\g(1-2/p)}_{2,p}$ is a sequence satisfying 
$$
v_0^n \to v_0\, \text{ weakly in $ B^{\g(1-2/p)}_{2,p} $
for some $v_0\in B^{\g(1-2/p)}_{2,p}$}.
$$
Then, for all $T<\infty$ and $r_0 <\g(1-\frac{2}{p})$,
\begin{equation}
\label{eq:scaling_limit_HNSE_reduction}
\lim_{n\to \infty} \P\Big(\sup_{t\in [0,T]}\|v^n(t)-\vd^n (t)\|_{H^{r_0}}>\varepsilon\Big)=0 \ \ \text{ for all }\varepsilon>0,
\end{equation}
where $v^n$ and $\vd^n$ denote the unique global $p$-solution to \eqref{eq:hyper_NS_cut_off} with $\theta=\theta^n$  and to \eqref{eq:hyper_NS_cut_off_det} with initial data $v_0^n$, respectively.
\end{proposition}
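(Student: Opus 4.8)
The plan is to prove Proposition \ref{prop:scaling_limit_HNSE_reduction} by a standard Gy\"ongy--Krylov/tightness argument, upgraded to handle the weak analytic setting in space and the lack of uniform smoothness of the noise coefficients \eqref{eq:lack_regularity_coeff}. First I would fix $T<\infty$ and record the uniform-in-$n$ bounds. By Theorem \ref{t:HNSEs_cutoff_global}\eqref{it:HNSEs_cutoff_global_2}, since $(v_0^n)_n$ is weakly convergent and hence bounded in $\Bs^{\g(1-2/p)}_{2,p}$, the solutions $v^n$ to \eqref{eq:hyper_NS_cut_off} with $\theta=\theta^n$ satisfy
$$
\sup_{n\geq 1}\Big(\E\sup_{s\in[0,T]}\|v^n(s)\|_{B^{\g(1-2/p)}_{2,p}}^p+\E\int_0^T\|v^n(s)\|_{H^\g}^p\,\dd s\Big)<\infty.
$$
Combined with the SPDE itself this gives a uniform bound for the time-regularity of $v^n$ in $H^{\vartheta,p}(0,T;\Hs^{-\g})$ for $\vartheta<1/2$; note the stochastic integral term is controlled in $\Hs^{-\g}$ using only $\|(\theta^n_k\sigma_{k,\alpha})_{k,\alpha}\|_{L^\infty(\T^3;\ell^2)}\lesssim\|\theta^n\|_{\ell^2}=1$ together with the Burkholder--Davis--Gundy inequality, so no spatial smoothness of the coefficients is needed. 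Since $\Bs^{\g(1-2/p)}_{2,p}\cap H^{\vartheta,p}(0,T;\Hs^{-\g})$ embeds compactly into $C([0,T];H^{r_0})$ for $r_0<\g(1-2/p)$ (interpolating between the compact embedding $\Hs^{\g}\hookrightarrow\hookrightarrow H^{r_0}$ and the time-H\"older continuity), the laws of $v^n$ on $C([0,T];H^{r_0})$ are tight; similarly for $\vd^n$, whose laws are in fact deterministically precompact by the same argument applied to \eqref{eq:hyper_NS_cut_off_det}.

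Next I would pass to the limit. By Prokhorov and Skorokhod, along a subsequence the joint laws of $(v^n,\vd^n,\mathcal W^n)$ converge, and on a new probability space one obtains a.s.-convergent copies $(\bar v^n,\bar v_{{\rm det}}^n,\bar{\mathcal W}^n)\to(\bar v,\bar v_{{\rm det}},\bar{\mathcal W})$ in $C([0,T];H^{r_0})^2\times C([0,T];\mathbb{U})$ for a suitable enlargement $\mathbb{U}$ of $\ell^2$. The key analytic input is the convergence of the It\^o--Stratonovich corrector: by \eqref{eq:choice_theta_n}--\eqref{eq:property_sequence_thetan} and \cite[Theorem 5.1]{FL19} (see also \cite[Theorem 3.1]{L23_enhanced}), $\LL_{\theta^n}\to\frac{3\mu}{5}\Delta$ in an appropriate operator-topology sense, while the martingale part $\sqrt{3\mu/2}\sum_{k,\alpha}\theta^n_k\,\p[(\sigma_{k,\alpha}\cdot\nabla)\bar v^n]\,\dd\bar W^{k,\alpha}$ converges to $0$ in $L^p(\Omega;L^p(0,T;\Hs^{-\g}))$ because its quadratic variation involves $\sum_k|\theta^n_k|^2\,|\p[(\sigma_{k,\alpha}\cdot\nabla)\cdot]|^2$, whose size is controlled by $\|\theta^n\|_{\ell^\infty}^2\|\bar v^n\|_{H^1}^2\to0$ (this is precisely the enhanced dissipation mechanism). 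A routine identification of the limit martingale problem then shows $\bar v$ is a weak (analytic, probabilistically weak) solution of the deterministic limiting equation \eqref{eq:hyper_NS_cut_off_det} with initial datum $v_0$; the cut-off $\phi_{R,r}$ passes to the limit by continuity of $H^{r_0}\ni w\mapsto\phi_{R,r}(w)$ since $r\le r_0$ is not assumed but $\phi_{R,r}$ depends on $\|\cdot\|_{H^r}$ and $H^{r_0}\hookrightarrow H^r$ — here I would use $r<r_0$ only if needed, otherwise interpolate. By Proposition \ref{prop:regularity_weak_sol_cut_off}, weak solutions of \eqref{eq:hyper_NS_cut_off_det} are unique and coincide with the $p$-solution, so $\bar v=\bar v_{{\rm det}}$ and, because $v_0^n\to v_0$, in fact $\bar v=\vd$ (the $p$-solution with datum $v_0$); moreover continuous dependence of $\vd^n$ on $v_0^n$ gives $\bar v_{{\rm det}}^n\to\vd$ as well. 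Hence $\bar v^n-\bar v_{{\rm det}}^n\to0$ in $C([0,T];H^{r_0})$ a.s., which transfers back to convergence in probability of $v^n-\vd^n$ on the original space; since the limit is the same deterministic object along every subsequence, the full sequence converges, yielding \eqref{eq:scaling_limit_HNSE_reduction}.

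A subtlety I should address carefully is that $v^n$ and $\vd^n$ are both global $p$-\emph{solutions} (strong in the probabilistic sense), so before invoking Skorokhod I should note that the uniform estimates and the blow-up criterion \eqref{eq:blow_up_criterion_v} indeed give globality uniformly in $n$, which is already guaranteed by Theorem \ref{t:HNSEs_cutoff_global}\eqref{it:HNSEs_cutoff_global_1}. Another point: to conclude convergence in probability on the original probability space from a.s.\ convergence of Skorokhod copies, I use that the limiting equation has a \emph{unique} solution, so the standard Gy\"ongy--Krylov lemma applies to the sequence $(v^n,\vd^n)$ viewed in $C([0,T];H^{r_0})\times C([0,T];H^{r_0})$: every subsequence has a further subsequence along which $(v^n,\vd^n)$ converges in law to a limit supported on the diagonal, whence $v^n-\vd^n\to0$ in probability.

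The main obstacle I anticipate is the passage to the limit in the nonlinear term together with the corrector. The convective term $\phi_{R,r}(v^n)\p[\nabla\cdot(v^n\otimes v^n)]$ is only bounded in $L^{p/2}$-in-time with values in $\Hs^{-\g}$ a priori, and one needs the strong convergence $\bar v^n\to\bar v$ in $L^2(0,T;H^\delta)$ for some $r<\delta<\g$ (available from the tightness in $C([0,T];H^{r_0})$ and the uniform $L^p(0,T;H^\g)$ bound by interpolation) to identify the limit; simultaneously one must control the corrector $\LL_{\theta^n}\bar v^n$, which is a \emph{second}-order operator applied to $\bar v^n$, uniformly in $n$ — this is where the weak setting $\Hs^{-\g}$ and the precise structure of $\LL_{\theta^n}$ from \eqref{eq:Ito_stratonovich_change}, \eqref{eq:property_sequence_thetan} and \cite[Theorem 5.1]{FL19} are indispensable, since $\|\LL_{\theta^n}\|_{\calL(\Hs^\g,\Hs^{-\g})}$ is bounded uniformly in $n$ while the pairing $\langle(\LL_{\theta^n}-\tfrac{3\mu}{5}\Delta)\bar v^n,\varphi\rangle\to0$ for smooth test functions $\varphi$.
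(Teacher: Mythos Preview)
Your proposal is essentially correct and follows the same overall strategy as the paper's proof: uniform-in-$n$ estimates from Theorem \ref{t:HNSEs_cutoff_global}, tightness in $C([0,T];H^{r_0})$, identification of any limit point as a weak solution of \eqref{eq:hyper_NS_cut_off_det} via the corrector convergence of \cite[Theorem 5.1]{FL19}, and then uniqueness from Proposition \ref{prop:regularity_weak_sol_cut_off}. The technical packaging differs in two places worth noting.

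First, the paper avoids Skorokhod and Gy\"ongy--Krylov entirely. It works directly with the sequence of laws $\mu_n=\mathcal{L}_{v^n}$ on $C([0,T];H^{r_0})$, shows any subsequential limit $\mu$ is the Dirac mass at the \emph{deterministic} solution $\vd$ (with datum $v_0$), and then invokes the elementary fact that convergence in distribution to a constant implies convergence in probability. There is no need to carry the noise $\mathcal{W}^n$ to a new probability space, and no need for a.s.\ convergent copies; your route works but introduces machinery that the deterministic nature of the limit makes superfluous. Second, to guarantee that the limiting measure is supported on the class where Definition \ref{def:HNSE_cutoff} applies (which requires $L^2(0,T;H^\g)$), the paper uses the \emph{pathwise} energy balance \eqref{eq:energy_estimate_unchanged} to obtain a \emph{deterministic} constant $K$ with $\int_0^T\|v^n\|_{H^\g}^2\,\dd t\leq K$ a.s., and then argues $\mu(\mathcal{X}_K)=1$ for the closed set $\mathcal{X}_K$ in \eqref{eq:def_X_K}. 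Your $L^p(\Omega)$ bound alone gives only a.s.\ finiteness, not the uniform closed-set membership needed to pass the $H^\g$-integrability to the limit; you should add this step. Separately, the paper handles the deterministic sequence $\vd^n\to\vd$ not by an abstract continuous-dependence statement but by rerunning the compactness/uniqueness argument once more; either approach is fine. Finally, your heuristic for the vanishing of the martingale part is slightly off: the $\|\theta^n\|_{\ell^\infty}$ factor does not appear in the $L^2(\ell^2)$ quadratic variation directly but only after pairing with a smooth test function and exploiting the explicit structure of $\sigma_{k,\alpha}$ --- this is precisely the content of Lemma \ref{l:time_regularity_estimate}\eqref{eq:time_regularity_estimate_1} (cf.\ \cite[Lemma 3.3]{FL19}).
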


Let us first show how the above result implies Theorem \ref{t:scaling_limit_HNSE}. The proof of Proposition \ref{prop:scaling_limit_HNSE_reduction} is given at the end of this subsection.

\begin{proof}[Proof of Theorem \ref{t:scaling_limit_HNSE}]
We prove Theorem \ref{t:scaling_limit_HNSE} by contradiction. Suppose that there exist $\varepsilon>0$, $T>0$ and $r_0<\g(1-\frac{2}{p})$ for which \eqref{eq:scaling_limit_HNSE} does not hold. 
Thus, there exist $\varepsilon>0$ and a (not relabeled sub)sequence $(v_0^n)_{n\geq 1}\subseteq \Bs^{\g(1-2/p)}_{2,p}$ such that $\|v_0^n\|_{B^{\g(1-2/p)}_{2,p}}\leq N$ and
\begin{equation}
\label{eq:contradiction_limit_proof_scaling}
\lim_{n\to \infty}\P\Big(\sup_{t\in [0,T]}\|v^n(t)-\vd^n(t) \|_{H^{r_0}(\T^3;\R^3)}>\varepsilon\Big)>0
\end{equation}
where $v^n$ and $\vd^n$ denote the unique global $p$-solution to \eqref{eq:hyper_NS_cut_off} with $\theta=\theta^n$  and to \eqref{eq:hyper_NS_cut_off_det} with initial data $v_0^n$, respectively.
Up to extract a subsequence, we may assume that $v^n_0\to v_0$ weakly in $B^{\g(1-2/p)}_{2,p}$.
Hence, \eqref{eq:contradiction_limit_proof_scaling} contradicts the claim of Proposition \ref{prop:scaling_limit_HNSE_reduction}. Therefore, the claim of Theorem \ref{t:scaling_limit_HNSE} is proved.
\end{proof}

For the proof of Proposition \ref{prop:scaling_limit_HNSE_reduction}, we need the following preliminary result.

\begin{lemma}[$\theta$-uniform time-regularity estimate]
\label{l:time_regularity_estimate}

Let $1<\g<\frac{5}{4}$, $\frac{4\g}{6\g-5}<p<\infty$, $R>0$ and $\frac{5}{2}-2\g< r < \g(1-\frac{2}{p})$.  Fix $a\in (1,\infty)$ and $u_0\in \Bs^{\g(1-2/p)}_{2,p}(\T^3)$. Let $v$ be the unique global $p$-solution to \eqref{eq:hyper_NS_cut_off} provided by Theorem \ref{t:HNSEs_cutoff_global}. Let $T<\infty$ and set  
$$
\mathcal{M}(t)\stackrel{{\rm def}}{=}\sqrt{\frac{3\mu}{2}}\sum_{k,\alpha} \theta_{k}\int_0^t\p[ (\sigma_{k,\alpha}\cdot\nabla) v] \,\dd W^{k,\alpha}_t \ \  \text{ for } \  t\in [ 0,T].
$$
Then there exist $s_1,r_1,K_1>0$ independent of $(u_0,\theta)$ such that 
\begin{align}
\label{eq:time_regularity_estimate_1}
\E\big[\|\mathcal{M}\|_{C^{s_1}(0,T;H^{-r_1})}^{2a}\big]
&\leq K_1\|\theta\|_{\ell^{\infty}}^{2a}\|u_0\|_{B^{\g(1-2/p)}_{2,p}}^{2a},\\
\label{eq:time_regularity_estimate_2}
\E\big[\|v\|_{C^{s_1}(0,T;H^{-r_1})}^{2a}\big]
&\leq K_1\|u_0\|_{B^{\g(1-2/p)}_{2,p}}^{2a}.
\end{align}
\end{lemma}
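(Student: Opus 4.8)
\textbf{Proof proposal for Lemma \ref{l:time_regularity_estimate}.}
The plan is to derive \eqref{eq:time_regularity_estimate_1} from the stochastic maximal $L^p$-regularity estimates of Appendix \ref{app:smr} (Theorem \ref{t:smr_theta}) together with a Kolmogorov-type continuity criterion, and then obtain \eqref{eq:time_regularity_estimate_2} by combining \eqref{eq:time_regularity_estimate_1} with the deterministic part of the evolution equation. First I would fix the function-space bookkeeping: by Theorem \ref{t:HNSEs_cutoff_global}\eqref{it:HNSEs_cutoff_global_2} we already control $\E\|v\|_{L^p(0,T;H^{\g})}^p$ and $\E\sup_{[0,T]}\|v\|_{B^{\g(1-2/p)}_{2,p}}^p$ by $C(T)(1+\|u_0\|^p_{B^{\g(1-2/p)}_{2,p}})$ uniformly in $\theta$, and by Step 2 of its proof one can in fact bootstrap to any moment $\E\|v\|_{L^{2a p}(0,T;H^\g)}^{2ap}\lesssim_{a,T}\|u_0\|^{2ap}$ after absorbing the cut-off nonlinearity (here the cut-off $\phi_{R,r}$ makes the nonlinear term bounded in $H^{-\g}$ up to a sublinear power of $\|v\|_{H^\g}$ by Lemma \ref{l:interpolation_subcriticality_r}, so the absorption argument goes through for every moment). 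With this in hand, the stochastic convolution-type integrand $(\theta_k\,\p[(\sigma_{k,\alpha}\cdot\nabla)v])_{k,\alpha}$ is, uniformly in $n$, bounded in $L^{2ap}(\O;L^{2a p}(0,T;H^{\g-1}(\T^3;\ell^2)))$, with an extra factor $\|\theta\|_{\ell^\infty}$ coming from $\sup_k|\theta_k|\,\|(\sigma_{k,\alpha})\|_{L^\infty(\ell^2)}\lesssim\|\theta\|_{\ell^\infty}$ — this is exactly the mechanism by which the $\|\theta\|_{\ell^\infty}$ factor enters \eqref{eq:time_regularity_estimate_1}.

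Next I would estimate time-increments of $\mathcal{M}$. For $0\le s<t\le T$, $\mathcal{M}(t)-\mathcal{M}(s)=\sqrt{3\mu/2}\int_s^t(\theta_k\p[(\sigma_{k,\alpha}\cdot\nabla)v])_{k,\alpha}\,\dd\Br_{\ell^2}$, so by the Burkholder–Davis–Gundy inequality in the UMD space $H^{-r_1}$ (with $r_1>0$ chosen so that $H^{\g-1}\embed\gamma(\ell^2,H^{-r_1})$, e.g. any $r_1>1-\g+d/2$ suffices by the Sobolev embedding for $\gamma$-radonifying operators on the torus) one gets, for $2ap$ large,
\begin{align*}
\E\|\mathcal{M}(t)-\mathcal{M}(s)\|_{H^{-r_1}}^{2ap}
&\lesssim \E\Big(\int_s^t \|(\theta_k\p[(\sigma_{k,\alpha}\cdot\nabla)v])_{k,\alpha}\|_{\gamma(\ell^2,H^{-r_1})}^2\,\dd\sigma\Big)^{ap}\\
&\lesssim |t-s|^{ap-1}\,\|\theta\|_{\ell^\infty}^{2ap}\,\E\int_0^T\|v(\sigma)\|_{H^\g}^{2ap}\,\dd\sigma
\lesssim |t-s|^{ap-1}\|\theta\|_{\ell^\infty}^{2ap}\|u_0\|_{B^{\g(1-2/p)}_{2,p}}^{2ap}.
\end{align*}
Choosing $a$ (hence $ap$) large enough that $ap-1>1$, the Kolmogorov continuity theorem for Banach-space valued processes (see e.g. \cite[Theorem 3.3 or 3.4 in the standard references on SPDEs]{} — concretely one may cite the version used in \cite{AV19_QSEE1} or in \cite{Analysis1}) upgrades this to $\E\|\mathcal{M}\|_{C^{s_1}(0,T;H^{-r_1})}^{2ap}\lesssim\|\theta\|_{\ell^\infty}^{2ap}\|u_0\|^{2ap}$ for any Hölder exponent $s_1<\tfrac12-\tfrac1{ap}$; relabelling $2ap$ as $2a$ (legitimate since $a\in(1,\infty)$ is a free parameter and any intermediate moment follows by Hölder) gives \eqref{eq:time_regularity_estimate_1} with $s_1,r_1,K_1$ depending only on $a,p,\g,R,T$ and not on $(u_0,\theta)$, since $\|\theta\|_{\ell^2}=1$ throughout.

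For \eqref{eq:time_regularity_estimate_2}, I would write $v(t)=v_0-\int_0^t[(-\Delta)^\g v+\phi_{R,r}(v)\p[\nabla\cdot(v\otimes v)]]\,\dd\sigma+\mathcal{M}(t)$. The drift term lies, after integration in time, in $C^{1-1/p}(0,T;H^{-\g})$: indeed $(-\Delta)^\g v\in L^p(0,T;H^{-\g})$ by Theorem \ref{t:HNSEs_cutoff_global} and $\phi_{R,r}(v)\p[\nabla\cdot(v\otimes v)]\in L^p(0,T;H^{-\g})$ uniformly by Lemma \ref{l:interpolation_subcriticality_r} and the cut-off, so its time primitive is in $W^{1,p}(0,T;H^{-\g})\embed C^{1-1/p}(0,T;H^{-\g})$ with the appropriate moment bound; enlarging $r_1$ so that $H^{-\g}\embed H^{-r_1}$ and shrinking $s_1$ to $s_1\wedge(1-1/p)$, this drift contribution is controlled in $C^{s_1}(0,T;H^{-r_1})$ with $2a$-th moment $\lesssim\|u_0\|^{2a}$ (here using $B^{\g(1-2/p)}_{2,p}\embed H^{-r_1}$ for the $v_0$ term). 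Adding the bound for $\mathcal{M}$ from \eqref{eq:time_regularity_estimate_1} and using $\|\theta\|_{\ell^\infty}\le\|\theta\|_{\ell^2}=1$ yields \eqref{eq:time_regularity_estimate_2}.

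The main obstacle I anticipate is not any single step but the uniformity in $\theta$: one must be careful that every constant produced — in the higher-moment version of Theorem \ref{t:HNSEs_cutoff_global}\eqref{it:HNSEs_cutoff_global_2}, in the BDG step, and in the absorption of the cut-off nonlinearity — depends on $\theta$ only through $\|\theta\|_{\ell^2}=1$ and $\|\theta\|_{\ell^\infty}$, with the latter appearing \emph{linearly} (to the relevant power) and not through any negative power. This forces the weak $H^{-r_1}$-setting in space (so that $\sigma_{k,\alpha}$ enters only through $\|(\sigma_{k,\alpha})\|_{L^\infty(\ell^2)}$, which is finite and $n$-independent, rather than through an $H^r$-norm which blows up as in \eqref{eq:lack_regularity_coeff}), and it is the reason the estimate is stated in a space of negative smoothness. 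Making the moment bootstrap in Theorem \ref{t:HNSEs_cutoff_global} explicit for arbitrary $a$ is the one genuinely non-routine piece; everything else is a standard BDG-plus-Kolmogorov argument.
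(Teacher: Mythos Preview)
Your overall architecture --- BDG on increments, Kolmogorov, then decompose $v$ as drift plus $\mathcal{M}$ --- matches the route the paper has in mind (the paper omits the proof and points to \cite[Proposition 3.6]{FGL21} and \cite[Lemma 6.3]{A22}, which proceed exactly this way). However, the one step that carries the whole lemma, namely how the factor $\|\theta\|_{\ell^\infty}$ appears, is misidentified in your proposal, and as written the argument does not go through.

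You claim the $\|\theta\|_{\ell^\infty}$ comes from ``$\sup_k|\theta_k|\cdot\|(\sigma_{k,\alpha})\|_{L^\infty(\ell^2)}$''. But $|\sigma_{k,\alpha}(x)|=1$ for every $k,\alpha,x$, so $\|(\sigma_{k,\alpha})_{k,\alpha}\|_{L^\infty(\ell^2)}=\infty$; the naive $L^\infty(\ell^2)$ bound on $(\theta_k\sigma_{k,\alpha})$ only recovers $\|\theta\|_{\ell^2}$, not $\|\theta\|_{\ell^\infty}$. Likewise the claim that the integrand sits uniformly in $H^{\g-1}(\T^3;\ell^2)$ fails: multiplication by $e^{2\pi i k\cdot x}$ does not preserve positive-order Sobolev norms uniformly in $k$. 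The correct mechanism --- what the paper means by ``the structure of the noise described in Subsection \ref{ss:noise}'' --- is the explicit Fourier computation: writing $\widehat{(\sigma_{k,\alpha}\cdot\nabla)v}(j)=2\pi i\,(a_{k,\alpha}\cdot(j-k))\,\hat v(j-k)$ and substituting $m=j-k$, one gets
\[
\sum_{k,\alpha}\theta_k^2\,\|(\sigma_{k,\alpha}\cdot\nabla)v\|_{H^{-r_1}}^2
\lesssim \sum_m |m|^2|\hat v(m)|^2 \sum_k \theta_k^2(1+|m+k|^2)^{-r_1}
\le C_{r_1}\,\|\theta\|_{\ell^\infty}^2\,\|\nabla v\|_{L^2}^2,
\]
where the inner sum is bounded by $\|\theta\|_{\ell^\infty}^2\sum_\ell(1+|\ell|^2)^{-r_1}$, finite precisely when $r_1>3/2$. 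This is the constraint on $r_1$, not the $\gamma$-radonifying condition you wrote.

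A second, smaller point: the moment bootstrap $\E\|v\|_{L^{2ap}(0,T;H^\g)}^{2ap}\lesssim\|u_0\|^{2ap}$ is not needed and is awkward to justify from Theorem \ref{t:HNSEs_cutoff_global} (the SMR estimate fixes the Besov trace space with the integrability exponent). The clean route uses the \emph{deterministic} energy balance for \eqref{eq:hyper_NS_cut_off}: a.s.\ $\|v\|_{L^\infty(0,T;L^2)}+\|v\|_{L^2(0,T;H^\g)}\lesssim\|v_0\|_{L^2}$, hence by interpolation $\|v\|_{L^{2\g}(0,T;H^1)}\lesssim\|v_0\|_{L^2}$ a.s., and then $\int_s^t\|v\|_{H^1}^2\le |t-s|^{1-1/\g}\|v_0\|_{L^2}^2$ a.s. Plugging this into BDG gives $\E\|\mathcal M(t)-\mathcal M(s)\|_{H^{-r_1}}^{2a}\lesssim\|\theta\|_{\ell^\infty}^{2a}\|v_0\|_{L^2}^{2a}|t-s|^{a(1-1/\g)}$, and Kolmogorov yields \eqref{eq:time_regularity_estimate_1} for $a>\g/(\g-1)$; smaller $a$ follow by H\"older. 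Your treatment of the drift for \eqref{eq:time_regularity_estimate_2} is fine.
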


The key point in the above result is the presence of $\|\theta\|_{\ell^{\infty}}$ on the RHS\eqref{eq:time_regularity_estimate_1}.
Lemma \ref{l:time_regularity_estimate} is well-known to experts, and it is a consequence of the structure of the noise described in Subsection \ref{ss:noise} and the energy equality for $v$:
\begin{equation}
\label{eq:energy_inequality_again_revised_cutoff}
\frac{1}{2}\|v(t)\|_{L^2}^2 + \int_0^t\|(-\Delta)^\g v\|_{L^2}^2 \,\dd s =\frac{1}{2}\|v_0\|_{L^2}^2 \  \text{ a.s.\ for all }t<\infty.
\end{equation}
As for \eqref{eq:energy_balance_v_revision}, the above is a consequence of the It\^o formula \cite[Theorem 4.2.5]{LR15}  (see also the comments below \eqref{eq:energy_balance_v_revision}) and the fact that the truncation of the nonlinearity $\phi_{R,r}(v)$ in \eqref{eq:hyper_NS_cut_off} is independent of $x\in\T^3$.
 We omit the details, and we refer to the proof of either \cite[Proposition 3.6]{FGL21} or \cite[Lemma 6.3]{A22} for a similar situation.

\begin{proof}[Proof of Proposition \ref{prop:scaling_limit_HNSE_reduction} -- Sketch]
The argument of the current proof follows essentially the one in the proof of \cite[Theorem 1.4]{FL19} (see also \cite[Proposition 3.7]{FGL21} and \cite[Theorem 6.1]{A22}). Thus, we only provide a sketch. Without loss of generality, we assume $r_0\geq r$.
In the following, we show that for each (not relabeled sub)sequence of $(v^n)_{n\geq 1}$, we can find a further subsequence for which \eqref{eq:scaling_limit_HNSE_reduction} holds.
Let $0<T<\infty$ be fixed.
By the Ascoli-Arzel\'a theorem and standard interpolation arguments, it is clear that, for all $r_0<\g(1-2/p)$ and $r_1,s_1>0$,  the following embedding is compact
\begin{equation}
\label{eq:compactness_path_space}
C([0,T];B^{\g(1-2/p)}_{2,p})\cap C^{s_1}(0,T;H^{-r_1})\embed_{{\rm c}} C([0,T];H^{r_0}).
\end{equation}
Now, let $(\theta^n)_{n\geq 1}$ be as in \eqref{eq:choice_theta_n}.
By Theorem \ref{t:HNSEs_cutoff_global}\eqref{it:HNSEs_cutoff_global_2} and \eqref{eq:time_regularity_estimate_2} in Lemma \ref{l:time_regularity_estimate}, there exists $s_1,r_1>0$ such that
$$
\sup_{n\geq 1}\big(
\E \sup_{t\in [0,T]}\|v^n(t)\|_{B^{\g(1-2/p)}_{2,p}}^2 + \E\|v^n\|_{C^{s_1}(0,T;H^{-r_1})}^2\big)<\infty.
$$
In particular, by
Prokhorov’s theorem and \eqref{eq:compactness_path_space}, the sequences of laws $(\mu_n)_{n\geq1}$, where $\mu_n\stackrel{{\rm def}}{=}\mathcal{L}_{v^n}$, admits a weakly convergence (not relabeled) subsequence in the space $C([0,T];\Hs^{r_0})$.  
Let us denote by $\mu$ its weak limit. Note that, by the energy inequality \eqref{eq:energy_inequality_again_revised_cutoff} and $\sup_{n\geq 1}\|v_0^n\|_{B^{\g(1-2/p)}_{2,p}}<\infty$ by assumption, there exists a deterministic constant $K\geq 1$ such that   
$
\displaystyle{\sup_{n\geq 1}\int_0^T\|v^n(t)\|^2_{H^\g}\,\dd t\leq K} 
$
a.s.
Let 
\begin{equation}
\label{eq:def_X_K}
\mathcal{X}_K\stackrel{{\rm def}}{=}\Big\{v\in  C([0,T];\Hs^{r_0})\,:\, \int_0^T\|v\|^2_{H^\g}\,\dd t\leq K\Big\}.
\end{equation}
Note that $\mathcal{X}_K\subset C([0,T];\Hs^{r_0})$ is closed due to the lower semicontinuity of the $L^2(0,T;H^\g)$-norm. 
Since $v^n\in \mathcal{X}_K$ for all $n\geq 1$, it follows that $\mu(\mathcal{X}_K)=1$.

Next, let us recall that, by \cite[Theorem 5.1]{FL19},    
for all divergence-free $\varphi\in C^{\infty}(\T^3;\R^3)$,
\begin{equation}
\label{eq:additional_dissipation_varphi_proof}
\lim_{n\to \infty} \LLn \varphi = \frac{3\mu}{5}\Delta\varphi \ \text{ in }\ L^2(\T^3;\R^3)
\end{equation}
where $\LLn \stackrel{{\rm def}}{=} \LLnn$ is the It\^o-Stratonovich corrector defined in \eqref{eq:Ito_stratonovich_change}.

Now, arguing as in \cite[Proposition 4.2]{FL19} (see also Step 1 in \cite[Theorem 6.1]{A22} or \cite[Proposition 3.7]{FGL21}), by combining \eqref{eq:time_regularity_estimate_1}, the continuity of the map
\begin{align*}
\mathcal{X}_K&\to C([0,T];\R), \\
v&\mapsto \Big[ t\mapsto \int_0^t\phi_{R,r}(v)\int_{\T^3} (v\otimes v) :\nabla \varphi \,\dd x\,\dd s\Big],
\end{align*}
for all divergence-free vector field $\varphi\in C^{\infty}(\T^d;\R^d)$ and \eqref{eq:additional_dissipation_varphi_proof}, it follows that 
$$
\mu\Big(v\in \mathcal{X}_K\,:\,v \text{ is a weak solution to }\eqref{eq:hyper_NS_cut_off_det}\Big)=1.
$$
For the definition of weak solutions to \eqref{eq:hyper_NS_cut_off_det}, see Definition \ref{def:HNSE_cutoff}.
Now, by \eqref{eq:def_X_K} and Proposition \ref{prop:regularity_weak_sol_cut_off}, it follows that $\mu=\delta_{\vd}$ where $\vd$ is
unique global $p$-solution to \eqref{eq:hyper_NS_cut_off_det} with initial data $v_0$.
Since the convergence $v^n \to \vd$ in distribution in $C([0,T];H^{r_0})$ implies convergence in probability in case of a deterministic limit, cf.\ Step 2 in \cite[Theorem 6.1]{A22}, 
we have
\begin{equation*}
\lim_{n\to \infty} \P\Big(\sup_{t\in [0,T]}\|v^n(t)-\vd (t)\|_{H^{r_0}}>\varepsilon\Big)=0 \ \ \text{ for all }\varepsilon>0.
\end{equation*}
The claim of Proposition \ref{prop:scaling_limit_HNSE_reduction} now follows by noticing that 
$$
\vd^n\to \vd \ \text{ in }C([0,T];H^{r_0}).
$$ 
To prove the above, it is enough to repeat the argument in the proof of Theorem \ref{t:HNSEs_cutoff_global}\eqref{it:HNSEs_cutoff_global_2} to obtain a uniform in $n$ estimate in $L^p(0,T;H^\g)\cap C([0,T];B^{\g(1-2/p)}_{2,p})$ for $\vd^n$ and conclude using Proposition \ref{prop:regularity_weak_sol_cut_off}, \eqref{eq:compactness_path_space} and the energy balance.
\end{proof}

\subsubsection*{Acknowledgements}
The author thanks Federico Butori, Umberto Pappalettera, and Mark Veraar for useful comments and suggestions. 
The author is grateful to the anonymous referees for their useful comments and a careful reading of the manuscript.

\def\polhk#1{\setbox0=\hbox{#1}{\ooalign{\hidewidth
  \lower1.5ex\hbox{`}\hidewidth\crcr\unhbox0}}} \def\cprime{$'$}

\end{document}